\author{K\'aroly J. B\"or\"oczky\footnote{{The author was supported by Hungarian National Research, Development and Innovation Office -- NKFIH grants 129630 and 132002.}}, Ferenc Fodor\thanks{The author was supported by Hungarian National Research, Development and Innovation Office -- NKFIH grants 129630.}, Daniel Hug}
\title{Strengthened inequalities for the mean width and the $\ell$-norm
\footnote{{\em AMS 2020 subject classification.}
Primary 52A40; Secondary 52A38, 52B12, 26D15.
\newline
{\em Key words and phrases.} Mean width, $\ell$-norm, simplex, extremal problem,  John ellipsoid, L\"owner ellipsoid,  Brascamp-Lieb inequality, mass transportation, stability result, isotropic measure.}}
\newcommand{\proofbox}{\mbox{ $\Box$}}
\newcommand{\R}{\mathbb{R}}
\newcommand{\N}{\mathbb{N}}
\DeclareMathOperator{\Id}{I}
\newtheorem{lemma}{Lemma}[section]
\newtheorem{theo}[lemma]{Theorem}
\newtheorem{coro}[lemma]{Corollary}
\newtheorem{prop}[lemma]{Proposition}
\begin{document}

\maketitle

\begin{abstract}
Barthe proved that the regular simplex maximizes the mean width of convex bodies whose John ellipsoid (maximal volume ellipsoid contained in the body)  is the Euclidean unit ball; or equivalently, the regular simplex maximizes the $\ell$-norm of convex bodies whose L\"owner  ellipsoid (minimal volume ellipsoid containing the body) is the Euclidean unit ball.
Schmuckenschl\"ager verified the reverse statement; namely, the regular simplex minimizes the mean width of convex bodies whose L\"owner ellipsoid is the Euclidean unit ball. 
Here we prove stronger stability versions of these results. We also consider related stability results for the mean width and the $\ell$-norm of the convex hull of the support of centered isotropic measures on the unit sphere. 
\end{abstract}


\section{Introduction}

In geometric inequalities and extremal problems, Euclidean balls and simplices often are the extremizers.
A classical example is the isoperimetric inequality which states that Euclidean balls have  smallest surface area among convex bodies (compact convex sets with non-empty interior) of given volume in Euclidean space $\R^n$, and Euclidean balls are the only minimizers. Another example is the Urysohn inequality which expresses the geometric fact that Euclidean balls minimize the mean width of convex bodies of given volume. To introduce the mean width,
let $\langle\cdot\,,\cdot\rangle$ and $\|\cdot\|$ denote the scalar product  and  Euclidean norm in $\R^n$, and
let $B^n$ be the Euclidean unit ball centred at the origin with $\kappa_n=V(B^n)=\pi^{n/2}/\Gamma(1+n/2)$, where
$V(\cdot)$ is the volume (Lebesgue measure) in $\R^n$. For a convex body $K$ in $\R^n$, the support function $h_K:\R^n\to\R$ of $K$ is defined by
$h_K(x)=\max_{y\in K}\langle x,y\rangle$ for $x\in\R^n$. Then the mean width of $K$ is given by
$$
W(K)=\frac1{n\kappa_n}\int_{S^{n-1}}(h_K(u)+h_K(-u))\,du,
$$
where the integration over the unit sphere $S^{n-1}$ is with respect to the $(n-1)$-dimensional Hausdorff measure ({that coincides with the spherical Lebesgue measure in this case}).

A prominent geometric extremal problem for which simplices are extremizers has been discovered and explored much more recently. First, recall that there exists a unique ellipsoid of maximal volume contained in $K$ (which is called the John ellipsoid of $K$), and a unique ellipsoid of minimal volume containing $K$ (which is called the L\"owner ellipsoid of $K$). It has been shown by Ball \cite{Bal91a} that simplices maximize  the volume of $K$
given the volume of the John ellipsoid of $K$, and thus simplices determine the extremal ``inner" volume ratio. For the dual problem, Barthe \cite{Bar98} proved that
simplices minimize  the volume of $K$
given the volume of the L\"owner ellipsoid of $K$, hence simplices determine the extremal ``outer" volume ratio (see also Lutwak, Yang, Zhang \cite{Lutwak0,Lutwak1}). In all these cases, equality was characterized by Barthe \cite{Bar98}.

In this paper, we consider the mean width and the so called $\ell$-norm.
To define the latter,  for a convex body $K\subset\R^n$ containing the origin in its interior, we set
$$
\|x\|_K=\min\{t\geq 0:\,x\in tK\}, \qquad  x\in\R^n.
$$
Furthermore, we write $\gamma_n$ for the standard Gaussian measure in $\R^n$ which has the density function
$x\mapsto \sqrt{2\pi}^{\,-n}e^{-{\|x\|^2}/{2}}$, $x\in\R^n$,  with respect to Lebesgue measure. Then the $\ell$-norm of $K$ is given by
$$
\ell(K)=\int_{\R^n}\|x\|_K\,\gamma_n(dx)=\mathbb{E} \| X\|_K,
$$
where $X$ is a Gaussian random vector with distribution $\gamma_n$.
If the polar body of $K$ is denoted by $K^\circ=\{x\in\R^n:\,\langle x,y\rangle\leq 1\;\forall y\in K\}$,
then we obtain the relation
\begin{equation}
\label{mean-ell}
\ell(K)=\mbox{$\frac{\ell(B^n)}2$}  W(K^\circ)
\end{equation}
with
$$
\lim_{n\to\infty}\frac{\ell(B^n)}{\sqrt{n}}=1.
$$
In addition, the $\ell$-norm of $K$ can be expressed in the form  (see Barthe \cite{Bar98b})
\begin{equation}
\label{ellGaussianint}
\ell(K)=\int_{\R^n}\mathbb{P}(\|X\|_K>t)\, dt
=\int_0^\infty(1-\gamma_n(tK))\,dt.
\end{equation}

 Let $\Delta_n$ be a regular simplex inscribed into $B^n$, and hence $\Delta_n^\circ$ is a regular simplex circumscribed around $B^n$. Theorem~\ref{meanw-non-sym} (i) is due to Barthe \cite{Bar98b}, and (ii) was proved by Schmuckenschl\"ager \cite{Sch99}. 

\begin{theo}[Barthe '98, Schmuckenschl\"ager '99]
\label{meanw-non-sym}
Let $K$ be a convex body in $\R^n$.
\begin{enumerate}
\item[{\rm (i)}] If $B^n\supset K$ is the L\"owner ellipsoid of $K$, then $\ell(K)\leq \ell(\Delta_n)$, and if 
 $B^n\subset K$ is the John ellipsoid of $K$, then $W(K)\leq W(\Delta_n^\circ)$. Equality holds in either case if and only if $K$ is a regular simplex.
\item[{\rm (ii)}] If $B^n\subset K$ is the John ellipsoid of $K$, then $\ell(K)\geq \ell(\Delta_n^\circ)$, and if 
$B^n\supset K$ is the L\"owner ellipsoid of $K$, then $W(K)\geq W(\Delta_n)$. Equality holds in either case if and only if $K$ is a regular simplex.
\end{enumerate}
\end{theo}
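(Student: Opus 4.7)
Within each of (i) and (ii) the two assertions are equivalent via the polarity identity \eqref{mean-ell} together with the facts that taking polars interchanges the John and L\"owner ellipsoids (both equal to $B^n$) and that $(\Delta_n)^\circ$ is congruent to $\Delta_n^\circ$. It therefore suffices to treat each part in its $\ell$-norm form.

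For part (i), suppose $B^n$ is the L\"owner ellipsoid of $K$. John's theorem in its L\"owner form produces contact points $u_1,\dots,u_m\in S^{n-1}\cap\partial K$ and weights $c_1,\dots,c_m>0$ with
$$
\sum_{i=1}^m c_i\,u_i\otimes u_i=\Id \quad\text{and}\quad \sum_{i=1}^m c_i\,u_i=0,
$$
and $h_K(u_i)=1$ for every $i$. Since $u_i\in K$, we have $\operatorname{conv}\{u_1,\dots,u_m\}\subseteq K$, so $\|\cdot\|_K\le\|\cdot\|_{\operatorname{conv}\{u_i\}}$ pointwise and hence $\ell(K)\le\ell(\operatorname{conv}\{u_1,\dots,u_m\})$. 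It thus suffices to prove $\ell(\operatorname{conv}\{u_i\})\le\ell(\Delta_n)$ for every isotropic contact system. This last inequality I would establish by applying the Brascamp-Lieb inequality in Ball's normalised form to an integral of the shape $\int_{\R^n}\prod_i f_i(\langle u_i,x\rangle)^{c_i}\,\gamma_n(dx)$, with the $f_i$ chosen to encode the integrand that builds up $\ell(\operatorname{conv}\{u_i\})$ via the Gaussian formula \eqref{ellGaussianint}. Brascamp-Lieb then dominates the $n$-dimensional integral by a product of one-dimensional Gaussian integrals, and the resulting one-dimensional bound is saturated precisely when $\{u_i,c_i\}$ is the contact system of a regular simplex.

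For part (ii), suppose $B^n$ is the John ellipsoid of $K$. John's theorem yields contact data satisfying the same decomposition; but now $h_K(u_i)=1$ forces $K\subseteq P:=\bigcap_{i}\{x\in\R^n:\,\langle u_i,x\rangle\le 1\}$, so $\|\cdot\|_K\ge\|\cdot\|_P$ and $\ell(K)\ge\ell(P)$. One is reduced to proving $\ell(P)\ge\ell(\Delta_n^\circ)$. A direct Brascamp-Lieb bound points the wrong way here, so I would instead invoke Barthe's reverse Brascamp-Lieb inequality (equivalently, an optimal transport argument), which under the same isotropy hypothesis provides the matching lower bound on an integral of the same type and identifies $\Delta_n^\circ$ as the minimiser.

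The main obstacle in both parts is the equality characterisation. Equality in the Brascamp-Lieb inequality and in its Barthe reverse is attained only for centred Gaussian densities, a rigid condition which must be translated into the assertion that $\{u_i,c_i\}$ is the isotropic system of a regular simplex and, crucially, that the sandwich inclusions $\operatorname{conv}\{u_i\}\subseteq K\subseteq P$ collapse at the extremum, so that $K$ itself is the regular simplex. This collapse follows from the strict monotonicity of $\ell$ under proper inclusion of convex bodies, but linking the extremal isotropic configuration back to the geometry of $K$ is the delicate step and is where the sharpness and uniqueness in each part of Theorem~\ref{meanw-non-sym} are really established.
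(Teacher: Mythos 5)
Your reductions are the right ones and agree with the framework the paper attributes to Barthe and Schmuckenschl\"ager (the paper does not reprove Theorem~\ref{meanw-non-sym}; it cites it and, via Lemma~\ref{discrete-iso} and \eqref{mean-ell}, reduces to the isotropic statement, whose mechanism is spelled out in stability form in Sections~\ref{sec:7} and \ref{sec:9}). However, you have assigned the two inequalities to the wrong halves, and in part (i) the step you propose cannot even be set up. For the L\"owner case (i) the body to control is the convex hull $C=\operatorname{conv}\{u_i\}$, and $\mathbf{1}\{x\in tC\}$ does \emph{not} factor as a product of functions of the individual scalars $\langle u_i,x\rangle$; membership in $tC$ is described by the existence of a representation $x=\sum_i c_i\theta_i u_i$ with $\theta_i\ge 0$, which is exactly the sup-convolution structure of the \emph{reverse} Brascamp--Lieb inequality \eqref{RBL}/\eqref{RBLf}. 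This is how the paper proceeds: Lemma~\ref{exponentialintestgs}(i) bounds a Laplace-type average of $\gamma_n(r\sqrt{n}\,C)$ from above by an $\int^*\sup\prod$ expression, the reverse inequality bounds that from below by the value attained by $\Delta_n$ (Lemma~\ref{exponentialintestgs}(ii)), and integrating in the shift parameter recovers $\ell$ via \eqref{ellGaussianint}, giving $\ell(C)\le\ell(\Delta_n)$. Conversely, in the John case (ii) the circumscribed polytope $P=\bigcap_i\{\langle u_i,x\rangle\le 1\}$ is precisely the situation where the indicator \emph{does} factor, so the forward Brascamp--Lieb inequality \eqref{BL}/\eqref{BLf} gives the upper bound on $\gamma_n(tP)$ and hence the lower bound $\ell(P)\ge\ell(\Delta_n^\circ)$ (Lemma~\ref{exponentialexpression0}); your remark that ``a direct Brascamp--Lieb bound points the wrong way'' there is backwards, and your plan to run the forward inequality on the convex hull in (i) would fail at the first step.

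A second genuine gap: you never use the centering condition $\sum_i c_iu_i=o$, and you work directly in $\R^n$. Without centering the inequalities are false (e.g.\ an isotropic system supported on $e_1,\dots,e_n$ gives $\ell(\operatorname{conv}\{u_i\})=\infty$), and even with it, equality in the geometric (reverse) Brascamp--Lieb inequality in $\R^n$ with equal non-Gaussian functions forces the $u_i$ to lie among the vertices of a regular crosspolytope, not of a regular simplex; so the one-dimensional bound you obtain in $\R^n$ is not saturated by $\Delta_n$ or $\Delta_n^\circ$, contrary to your claim. The essential device (Barthe, Schmuckenschl\"ager, Li--Leng, and Proposition~\ref{BLRBLgsg} here) is the lift $\tilde u_i=\sqrt{\tfrac{n}{n+1}}\,u_i+\tfrac{1}{\sqrt{n+1}}\,e\in S^n$, $\tilde c_i=\tfrac{n+1}{n}c_i$, which converts isotropy \emph{plus} centering in $\R^n$ into isotropy in $\R^{n+1}$, makes the lifted regular-simplex system an orthonormal basis of $\R^{n+1}$ (so that equality holds there for arbitrary functions), and, together with the truncated Gaussians $\tilde g_s$ and an integration over the shift $s$, reproduces $\ell$ through \eqref{ellGaussianint}. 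The equality characterization then comes from Barthe's rigidity for \eqref{BL}/\eqref{RBL} applied to the lifted configuration, combined with your (correct) sandwich $\operatorname{conv}\{u_i\}\subset K\subset P$; without the lift this last identification of the extremizer cannot be carried out as you describe.
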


It follows from (\ref{mean-ell}) and the duality of L\"owner and John ellipsoids that the two statements in (i) are equivalent to each other, and the same is true for (ii). 

While a reverse form of the Urysohn inequality is still not known in general,  we recall that Giannopoulos, Milman, Rudelson \cite{GMR00} proved a reverse Urysohn inequality for zonoids, and Hug, Schneider \cite{HS11} established  reverse inequalities of other intrinsic and mixed volumes for zonoids and explored applications to stochastic geometry. A {related classical} open problem in convexity and probability theory is that
among all simplices contained in the Euclidean unit ball, the inscribed regular simplex
has the maximal mean width (see Litvak \cite{Lit18} for a comprehensive survey on this topic).

Let us discuss the range of $W(K)$ (and hence that of $\ell(K)$ by \eqref{mean-ell}) in Theorem~\ref{meanw-non-sym}.
If $K$ is a convex body in $\R^n$ whose L\"owner ellipsoid is $B^n$,
then the monotonicity of the mean width and Theorem~\ref{meanw-non-sym} (i) yield
$$
W(\Delta_n)\leq W(K)\leq W(B^n)=2,
$$
where, according to B\"or\"oczky \cite{Bor94}, we have
$$
W(\Delta_n)\sim 4\sqrt{\frac{2\ln n}n}\mbox{ \ as $n\to\infty$}.
$$
In addition, if $K$ is a convex body in $\R^n$ whose John ellipsoid is $B^n$,
then
$$2=W(B^n)\leq W(K)\leq W(\Delta_n^\circ)$$
with $W(\Delta_n^\circ)\sim 4\sqrt{2n\ln n}$.

An important concept in the proof of Theorem~\ref{meanw-non-sym} is the notion of an isotropic measure on the unit sphere. Following Giannopoulos, Papadimitrakis \cite{GianPapa1999} and
Lutwak, Yang, Zhang \cite{Lutwak1}, we call
a Borel measure $\mu$ on the unit sphere $S^{n-1}$ isotropic if
\begin{equation}
\label{isotropic-def}
\int_{S^{n-1}}u\otimes u\, \mu(du)=\Id_n,
\end{equation}
where $\Id_n$ is the {identity map (or the identity matrix)}. Condition \eqref{isotropic-def} is equivalent to
$$
\langle x,x\rangle=\int_{S^{n-1}}\langle u,x\rangle^2\, \mu(du) \mbox{ \ for  $x\in\R^n$}.
$$
In this case, equating traces of the two sides of (\ref{isotropic-def}), we obtain that 
$\mu(S^{n-1})=n$. 
In addition, we say that the isotropic measure $\mu$ on $S^{n-1}$ is centered if
$$
\int_{S^{n-1}}u\, \mu(du)=o.
$$
We observe that if $\mu$ is a centered isotropic measure on $S^{n-1}$, then
$|{\rm supp}\,\mu|\geq n+1$, with equality if and only if $\mu$ is concentrated on the vertices of some regular simplex and each vertex has  measure $n/(n+1)$.

We recall that isotropic measures on $\R^n$ play a central role in the
KLS conjecture by Kannan, Lov\'asz and   Simonovits
\cite{KLM95} as well as in the analysis of Bourgain's hyperplane conjecture (slicing problem); see, for instance,
Barthe and  Cordero-Erausquin \cite{BCE13}, Guedon and  Milman \cite{GuM11},
Klartag \cite{Kla09}, Artstein-Avidan,  Giannopoulos, Milman \cite{AGM15}
and Alonso-Guti\'errez, Bastero \cite{AGB15}.

The emergence of isotropic measures on $S^{n-1}$ arises from Ball's crucial insight that John's characteristic condition \cite{Joh37,Joh48} for a convex body to have the unit ball as its John or L\"owner ellipsoid
 (see \cite{Bal89,Bal91a}) can be used to give the Brascamp-Lieb inequality a convenient form which is ideally suited  for many
 geometric applications
(see Section~\ref{secbrascamp-lieb}). John's characteristic condition (with the proof of the equivalence completed by Ball \cite{Bal92}) states  that $B^n$ is the John ellipsoid of a convex body $K$ containing  $B^n$ if and only if there exist distinct unit vectors $u_1,\ldots,u_k\in \partial K\cap S^{n-1}$ and $c_1,\ldots,c_k>0$ such that
\begin{align}
\label{John-iso}
\sum_{i=1}^kc_iu_i\otimes u_i&=\Id_n,\\
\label{John-centered}
\sum_{i=1}^kc_iu_i&=o.
\end{align}
In particular, the measure $\mu$ on $S^{n-1}$ with support $\{u_1,\ldots,u_k\}$ and $\mu(\{u_i\})=c_i$ for
$i=1,\ldots,k$ is isotropic and centered. In addition, $B^n$ is the L\"owner ellipsoid of a convex body
$K\subset B^n$ if and only if there exist $u_1,\ldots,u_k\in \partial K\cap S^{n-1}$ and $c_1,\ldots,c_k>0$ satisfying
(\ref{John-iso}) and
(\ref{John-centered}).
According to John \cite{Joh48} (see also Gruber, Schuster \cite{GrS05}), we may assume
that
$k\leq n(n+3)/2$ in (\ref{John-iso}) and
(\ref{John-centered}).
It follows from John's characterization that $B^n$ is the L\"owner ellipsoid of a convex body
$K\subset B^n$ if and only if
$B^n$ is the John ellipsoid of $K^\circ$.

The finite Borel measures on $S^{n-1}$ which have an isotropic linear image are characterized by
B\"or\"oczky, Lutwak, Yang and  Zhang \cite{BLYZ15}, building on earlier work by
Carlen,
and Cordero-Erausquin \cite{CCE09},  Bennett, Carbery, Christ and Tao \cite{BCCT08} and Klartag \cite{Kla10}.

We write ${\rm conv}\,X$ to denote the convex hull of a set $X\subset \R^n$. We observe that if $\mu$ is a centered isotropic measure on $S^{n-1}$, then $o\in{\rm int}\,Z_\infty(\mu)$ for
$$
Z_\infty(\mu)={\rm conv}\,{\rm supp}\,\mu.
$$
For the present purpose, the study of $Z_\infty(\mu)$ can be reduced to discrete measures,
as
Lemma~10.1 in B\"or\"oczky, Hug \cite{BoH17} states that for any centered isotropic measure $\mu$, there exists a discrete centered isotropic measure $\mu_0$ on $S^{n-1}$ whose support is contained in the support of $\mu$ (see Lemma~\ref{discrete-iso}).
It follows  that
Theorem~\ref{meanw-non-sym} is equivalent to the following statements about isotropic measures proved by Li, Leng \cite{LiL12}.

\begin{theo}[Li, Leng '12]
\label{meanw-isotropic}
If $\mu$ is a centered isotropic measure on $S^{n-1}$, then $\ell(Z_\infty(\mu))\leq \ell(\Delta_n)$, $W(Z_\infty(\mu)^\circ)\leq W(\Delta_n^\circ)$, $\ell(Z_\infty(\mu)^\circ)\geq \ell(\Delta_n^\circ)$ and 
 $W(Z_\infty(\mu))\geq W(\Delta_n)$, 
with equality in either case if and  only if $|{\rm supp}\,\mu|=n+1$.
\end{theo}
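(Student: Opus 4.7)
The plan is to deduce Theorem~\ref{meanw-isotropic} from Theorem~\ref{meanw-non-sym} by combining John's characterization with the discrete reduction of Lemma~\ref{discrete-iso}.

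First, I would reduce to the case that $\mu$ is discrete. Lemma~\ref{discrete-iso} produces a discrete centered isotropic measure $\mu_0$ with ${\rm supp}\,\mu_0\subset{\rm supp}\,\mu$, so that $Z_\infty(\mu_0)\subset Z_\infty(\mu)$. Since $\ell(\cdot)$ is decreasing and $W(\cdot)$ is increasing under set inclusion, and polarity reverses inclusions, each of the four inequalities for $\mu$ follows once the corresponding inequality is established for $\mu_0$. I therefore assume $\mu$ is discrete with weights $c_i>0$ on unit vectors $u_1,\ldots,u_k$ satisfying (\ref{John-iso}) and (\ref{John-centered}), and set $K=Z_\infty(\mu)={\rm conv}\,\{u_1,\ldots,u_k\}$.

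Because $K\subset B^n$ and each $u_i\in K\cap S^{n-1}$, each $u_i$ lies in $\partial K\cap S^{n-1}$. John's characterization as quoted in the excerpt then shows that $B^n$ is the L\"owner ellipsoid of $K$, and by the polar duality of L\"owner and John ellipsoids, $B^n$ is simultaneously the John ellipsoid of $K^\circ$. Applying Theorem~\ref{meanw-non-sym}(i) to $K$ and to $K^\circ$ yields the two upper bounds $\ell(K)\leq\ell(\Delta_n)$ and $W(K^\circ)\leq W(\Delta_n^\circ)$, while applying Theorem~\ref{meanw-non-sym}(ii) to $K$ and to $K^\circ$ yields the two lower bounds $W(K)\geq W(\Delta_n)$ and $\ell(K^\circ)\geq\ell(\Delta_n^\circ)$.

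For the equality case, equality in any of the four inequalities propagates back through the monotonicity chain, so by the equality case of Theorem~\ref{meanw-non-sym} the body $Z_\infty(\mu_0)$ must be a regular simplex inscribed in $B^n$. Strict monotonicity of $\|\cdot\|_K$ and $h_K$ under proper inclusion then yields $Z_\infty(\mu)=Z_\infty(\mu_0)$, so that ${\rm supp}\,\mu\subset Z_\infty(\mu)\cap S^{n-1}$ consists of exactly the $n+1$ vertices of this simplex, giving $|{\rm supp}\,\mu|=n+1$. Conversely, if $|{\rm supp}\,\mu|=n+1$ then by the observation recorded in the excerpt, $\mu$ is concentrated on the vertices of a regular simplex, so $Z_\infty(\mu)$ is a regular simplex and equality follows from the equality case of Theorem~\ref{meanw-non-sym}.

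The only delicate step, given Theorem~\ref{meanw-non-sym} and Lemma~\ref{discrete-iso}, is verifying the strict set-inclusion monotonicity needed to trace the equality case back from $\mu_0$ to $\mu$; the remainder is a direct translation between the language of isotropic measures and the L\"owner/John hypotheses of Theorem~\ref{meanw-non-sym}, so no substantial new obstacle arises.
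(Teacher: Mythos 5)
Your proposal is correct and follows essentially the route the paper itself indicates: the paper presents Theorem~\ref{meanw-isotropic} as equivalent to Theorem~\ref{meanw-non-sym} via the discrete reduction of Lemma~\ref{discrete-iso} together with John's characterization \eqref{John-iso}--\eqref{John-centered}, which is exactly the reduction you carry out. Your filling-in of the equality case (strict monotonicity of $\ell$ and $W$ under proper inclusion, plus the fact that a regular simplex inscribed in $B^n$ meets $S^{n-1}$ only in its vertices) is sound and consistent with the paper's remarks on centered isotropic measures with $|{\rm supp}\,\mu|=n+1$.
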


Results similar to Theorem~\ref{meanw-isotropic} are proved by Ma \cite{TMa17} in the $L_p$ setting.

The main goal of the present paper is to provide stronger stability versions of Theorem~\ref{meanw-non-sym} and 
Theorem \ref{meanw-isotropic}. Since {our} results {use the notion of distance} between  convex bodies (and to fix the notation), we recall that the  distance between compact subsets $X$ and $Y$ of $\R^n$ is  measured in terms of the Hausdorff distance {defined} by
$$
\delta_H(X,Y)=\max\{\max_{y\in Y}d(y,X),\max_{x\in X}d(x,Y)\}, 
$$
where $d(x,Y)=\min\{\|x-y\|:y\in Y\}$. The Hausdorf distance defines a metric on the set of non-empty compact subsets of $\R^n$.

In addition, for convex bodies $K$ and $C$, the symmetric difference distance of $K$ and $C$ is
the volume of their symmetric difference; namely,
$$
\delta_{\rm vol}(K,C)=V(K\setminus C)+V(C\setminus K).
$$
Clearly, the symmetric difference distance also defines a metric on the set of convex bodies in $\R^n$. Both  metrics 
induce the same topology on the space of convex bodies, but not the same uniform structure. 

Let $O(n)$ denote the orthogonal group (rotation group) of $\R^n$.

\begin{theo}
\label{Lowner-stab}
Let $B^n$ be the {L\"owner} ellipsoid of a convex body $K\subset B^n$ in $\R^n$,
let $c=n^{26n}$ and let $\varepsilon\in(0,1)$.
If $\ell(K)\geq (1-\varepsilon)\ell(\Delta_n)$, then
there exists a $T\in O(n)$ such that
\begin{enumerate}
\item[{\rm (i)}] $\delta_{\rm vol}(K,T\Delta_n)\leq c\, \sqrt[4]{\varepsilon},$
\item[{\rm (ii)}] $\delta_H(K,T\Delta_n)\leq c\, \sqrt[4]{\varepsilon}$.
\end{enumerate}
\end{theo}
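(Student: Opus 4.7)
My plan is to pass from $K$ to the polytope spanned by the contact points of $K$ with its L\"owner ellipsoid $B^n$, apply a stability version of Theorem~\ref{meanw-isotropic}, and then argue that $K$ itself is forced to be close to that polytope because its $\ell$-norm is so close to the maximum.

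First, I would invoke John's characterization to obtain contact unit vectors $u_1,\ldots,u_k\in\partial K\cap S^{n-1}$ and positive weights $c_1,\ldots,c_k$ satisfying (\ref{John-iso}) and (\ref{John-centered}); equivalently, the measure $\mu=\sum_i c_i\delta_{u_i}$ is a discrete centered isotropic measure, and $Z_\infty(\mu)=\mathrm{conv}\{u_1,\ldots,u_k\}\subset K\subset B^n$. Since a smaller body has a larger gauge function, monotonicity of the $\ell$-norm combined with Theorem~\ref{meanw-isotropic} gives the sandwich
\[
\ell(\Delta_n)\ \geq\ \ell(Z_\infty(\mu))\ \geq\ \ell(K)\ \geq\ (1-\varepsilon)\,\ell(\Delta_n),
\]
so both $\ell(Z_\infty(\mu))$ is close to the extremal value $\ell(\Delta_n)$ and $\ell(K)$ is close to $\ell(Z_\infty(\mu))$. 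Each of these closeness statements will be exploited separately.

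The main step is a stability version of the isotropic inequality $\ell(Z_\infty(\mu))\leq\ell(\Delta_n)$: given $\ell(Z_\infty(\mu))\geq(1-\varepsilon)\ell(\Delta_n)$, produce a $T\in O(n)$ with $\delta_H(Z_\infty(\mu),T\Delta_n)\leq c'\varepsilon^{\alpha}$ for explicit $c'$ and $\alpha>0$. I expect this to be the principal obstacle: one needs a quantitative form of the rank-one Brascamp-Lieb inequality underlying Barthe's proof, together with an argument that near-equality forces the isotropic measure $\mu$ to be close, in a Wasserstein-type sense, to the uniform measure of mass $n/(n+1)$ on the $n+1$ vertices of some rotated regular simplex. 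This is presumably the core technical content of the paper's earlier sections and naturally produces a stability exponent smaller than $1$.

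Finally, I would upgrade $Z_\infty(\mu)\approx T\Delta_n$ to $K\approx T\Delta_n$. Since $Z_\infty(\mu)\subset K\subset B^n$, the integrand $\|x\|_{Z_\infty(\mu)}-\|x\|_K$ is pointwise non-negative, and its Gaussian integral equals $\ell(Z_\infty(\mu))-\ell(K)\leq\varepsilon\,\ell(\Delta_n)$, which is of order $\varepsilon\sqrt{n}$. Any direction $v\in S^{n-1}$ in which $K$ protrudes substantially beyond $Z_\infty(\mu)$ forces a comparable defect in the radial function on a Gaussian cone around $v$, so one can extract a Hausdorff-distance bound $\delta_H(K,Z_\infty(\mu))\leq c''\varepsilon^{\beta}$; the triangle inequality combined with the stability from Step~3 then yields part~(ii). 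For part~(i), since all bodies involved lie in $B^n$, a layer-cake argument converts a Hausdorff bound of order $\varepsilon^{1/4}$ into a symmetric-difference bound of the same order up to an extra polynomial factor in $n$. The cumulative dimensional losses from these three reductions, each potentially costing a square root and a factor $n^{O(n)}$, account for the fourth root and the constant $n^{26n}$ in the statement.
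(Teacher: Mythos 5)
Your outline reproduces the paper's strategy almost step for step: pass to the John/L\"owner contact points, which give a discrete centered isotropic measure $\mu_0$ with $C=Z_\infty(\mu_0)\subset K\subset B^n$ and $\ell(C)\geq\ell(K)\geq(1-\varepsilon)\ell(\Delta_n)$; feed this into a stability version of the isotropic extremal result; then upgrade closeness of the contact polytope to closeness of $K$ by observing that any protrusion of $K$ creates Gaussian mass, hence an $\ell$-defect, via \eqref{ellGaussianint}. The last step is exactly what the paper does (with the explicit simplex $\widetilde{\Delta}_n=(1+2n\eta)^{-1}\Delta_n\subset S_0\subset K$ in place of $Z_\infty(\mu_0)$, Lemma~\ref{voldifffromsimplex}(ii) supplying the linear-in-$\xi$ volume excess), and your conversion of the Hausdorff bound into a symmetric-difference bound is the same sandwich argument. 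Two cosmetic slips do not affect the structure: $\ell(\Delta_n)$ is of order $n\sqrt{\log n}$, not $\sqrt n$, and the case of ``large'' $\varepsilon$ (where the quantitative machinery is unavailable) has to be dispatched trivially using the size of $c=n^{26n}$, as the paper does.

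The genuine gap is that the decisive quantitative input is not proved but only named: you defer the stability version of the extremal inequality for $Z_\infty(\mu)$ to ``a quantitative form of the rank-one Brascamp--Lieb inequality,'' which is precisely the content of Sections~\ref{sec6}--\ref{sec:7} (the stability of the Ball--Barthe determinant inequality, the derivative estimates for the transport maps between $g$ and the truncated Gaussians $g_s$, and Proposition~\ref{uiwjdist}); without it there is no source of the exponent $\tfrac14$ or of the constant $n^{26n}$, so the proposal is a plan rather than a proof. A secondary, repairable misstatement: you present the output of that machinery as a Hausdorff bound $\delta_H(Z_\infty(\mu),T\Delta_n)\leq c'\varepsilon^\alpha$ (even a Wasserstein statement about the weights), but what the quantitative Brascamp--Lieb step actually yields (Proposition~\ref{uiwjdist}) is only that some $n+1$ contact points are within $\eta=n^{15n}\varepsilon^{1/4}$ of the vertices of a regular simplex, i.e.\ an \emph{inner} inclusion $(1+2n\eta)^{-1}\Delta_n\subset K$ after Lemma~\ref{closesimplex}; the outer containment of $Z_\infty(\mu)$ (and of $K$) must then be extracted by the protrusion/$\ell$-defect argument you describe in your final step, while the full statement that \emph{every} support point is near a vertex is Theorem~\ref{meanw-isotropicstab} and needs additional tools (Lemma~\ref{closefarsimplex} and the Meyer--Reisner volume-product inequality). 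So the architecture is right and matches the paper, but the heart of the argument — the stability of the (reverse) Brascamp--Lieb inequality for the truncated Gaussian family — is missing, and the main step should be reformulated as the weaker ``$n+1$ contact points near a regular simplex'' statement that this machinery actually delivers.
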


\begin{theo}
\label{John-stab}
Let $B^n$ be the John ellipsoid of a convex body $K\supset B^n$ in $\R^n$
and let $\varepsilon>0$.
 If  $\ell(K)\leq (1+\varepsilon)\ell(\Delta_n^\circ)$, then
there exists a $T\in O(n)$ such that
\begin{enumerate}
\item[{\rm (i)}] $\delta_{\rm vol}(K,T\Delta_n^\circ)\leq c \,\sqrt[4]{\varepsilon}$ \,for $c=n^{27n}$,
\item[{\rm (ii)}] $\delta_H(K,T\Delta_n^\circ)\leq c\,\sqrt[4n]{\varepsilon}$\, for $c=n^{27}$.
\end{enumerate}
\end{theo}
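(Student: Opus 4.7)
By John's condition (\ref{John-iso})--(\ref{John-centered}) there exist contact points $u_1,\dots,u_k\in \partial K\cap S^{n-1}$ and weights $c_i>0$ such that the measure $\mu=\sum_{i=1}^k c_i\delta_{u_i}$ on $S^{n-1}$ is centered and isotropic. Each support hyperplane $\{x:\langle x,u_i\rangle=1\}$ of $B^n$ also supports $K$, so
$$
B^n\subset K\subset P(\mu):=\{x\in\R^n:\langle x,u_i\rangle\le 1\;\text{for all}\;i\}=Z_\infty(\mu)^\circ.
$$
Since $\ell(\cdot)$ is monotone decreasing under set inclusion, combined with Theorem \ref{meanw-isotropic} this gives the chain
$$
\ell(\Delta_n^\circ)\le \ell(P(\mu))\le \ell(K)\le (1+\varepsilon)\ell(\Delta_n^\circ),
$$
so each individual gap $\ell(P(\mu))-\ell(\Delta_n^\circ)$ and $\ell(K)-\ell(P(\mu))$ is at most $\varepsilon\,\ell(\Delta_n^\circ)$. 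The plan is thus to reduce to a stability statement for centered isotropic measures, and then to reconstruct $K$ from its John contact polytope $P(\mu)$.

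\textbf{Stability for isotropic measures (main technical step).} The crux is a quantitative form of the inequality $\ell(Z_\infty(\nu)^\circ)\ge \ell(\Delta_n^\circ)$ for centered isotropic measures $\nu$: a small gap should force the support of $\nu$ to be contained, after a rotation $T\in O(n)$, in small spherical neighborhoods of the vertices of $\Delta_n$, so that $P(\nu)$ is close to $T\Delta_n^\circ$. I would expect this to be proved as a separate proposition earlier in the paper, refining Schmuckenschl\"ager's reverse Brascamp--Lieb (Barthe-type) approach by tracking the deficit in the underlying mass-transportation inequality: one measures how far the optimal transport plan must deviate from its one-dimensional tensor-product extremizer in order to realize the given gap, and converts this deviation into polynomial-in-$\varepsilon$ bounds on the support of $\nu$ and hence polynomial-in-$\varepsilon$ bounds on $\delta_H(P(\mu),T\Delta_n^\circ)$ and $\delta_{\mathrm{vol}}(P(\mu),T\Delta_n^\circ)$. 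Turning the rigid equality characterization of Barthe's inequality into an effective quantitative stability statement is the main obstacle.

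\textbf{Reconstruction of $K$ and distance bounds.} Once $P(\mu)$ is shown to be close to $T\Delta_n^\circ$, it remains to control the gap between $K$ and $P(\mu)$. Since $K\subset P(\mu)$ and $\ell(K)-\ell(P(\mu))\le \varepsilon\,\ell(\Delta_n^\circ)$, any cap of $P(\mu)$ at a vertex of size $\rho$ cut off from $K$ would force the $\ell$-deficit to exceed a quantity of the order of the Gaussian mass of the cone over that cap times the norm increment, yielding a polynomial-in-$n$ and polynomial-in-$\varepsilon$ upper bound on $\rho$. Combining this with the previous step produces (i) with constant $n^{27n}$ and rate $\varepsilon^{1/4}$. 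For (ii) one applies the standard fact that for convex bodies $K_1,K_2$ satisfying $B^n\subset K_i\subset rB^n$ one has $\delta_H(K_1,K_2)\le c(n,r)\,\delta_{\mathrm{vol}}(K_1,K_2)^{1/n}$; since the relevant bodies are contained in a ball of radius comparable to the circumradius of $\Delta_n^\circ$, which is of order $n$, this converts the rate $\varepsilon^{1/4}$ of (i) into $\varepsilon^{1/(4n)}$ and reduces the constant from $n^{27n}$ to essentially $n^{27}$ after the $n$-th root.
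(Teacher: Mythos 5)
Your overall architecture coincides with the paper's: pass to the John contact points (a centered isotropic measure with at most $n(n+3)/2\le 2n^2$ atoms), prove a stability statement saying that a small $\ell$-gap forces $n+1$ contact points to lie near the vertices of a rotated regular simplex, and then recover $K$ from the contact polytope via $\ell(K)=\int_0^\infty(1-\gamma_n(tK))\,dt$. The problem is that the heart of the theorem is exactly the step you defer: your ``main technical step'' is stated as an expectation (``I would expect this to be proved as a separate proposition\ldots the main obstacle''), and no argument is offered for it. In the paper this is Proposition~\ref{uiwjdist0}, and its proof occupies Sections~\ref{sec6} and~\ref{sec:9}: it needs a stability version of the Ball--Barthe determinant inequality (Lemma~\ref{Ball-Barthe-stab0}, Corollary~\ref{Ball-Barthe-stab}, Lemma~\ref{ciuibig}), explicit two-sided bounds on the first and second derivatives of the transport maps between the Gaussian and the shifted truncated Gaussians (Lemma~\ref{gstransporterror}), a quantitative Brascamp--Lieb inequality for the parametric family $\tilde g_s$ after lifting the configuration to $\R^{n+1}$ (Proposition~\ref{BLRBLgsg}, Corollary~\ref{BLRBLflambda}), and finally an integration over the shift parameter $s$ that converts the pointwise Brascamp--Lieb deficit into a deficit of $\ell(C^\circ)$ against $\ell(\Delta_n^\circ)$ (Lemma~\ref{exponentialexpression0} and the computation leading to \eqref{intlambdataustab0}). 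None of this is sketched, and the exponent $\varepsilon^{1/4}$ you assert in (i) is an output of precisely this missing analysis; as written, the proposal assumes the conclusion of the hardest part.

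The reconstruction step is essentially right in spirit and matches the paper: the paper bounds $V(S_1\setminus K)$ by a multiple of $\ell(K)-\ell(S_1)$ by integrating the Gaussian measure difference over $t\le 1/(2n)$, where $S_1$ is the simplex cut out by the tangent hyperplanes at $u_1,\dots,u_{n+1}$ and is sandwiched between $(1-n\eta)\Delta_n^\circ$ and $(1+2n\eta)\Delta_n^\circ$ by Lemma~\ref{closesimplex}. Two further remarks. First, bounding $\delta_{\rm vol}(P(\mu),T\Delta_n^\circ)$ as you propose requires control of \emph{all} facets of the contact polytope, while the stability proposition only locates $n+1$ contact points; the paper sidesteps this by working with $S_1\supset P(\mu)\supset K$ and bounding $V(S_1\setminus K)$, which controls both gaps at once. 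Second, for (ii) the generic inequality $\delta_H\le c(n,R)\,\delta_{\rm vol}^{1/n}$ for bodies between $B^n$ and $nB^n$ carries a factor $c(n,R)$ of polynomial size (roughly $n^{3/2}$ after the $n$-th root), so your route yields a constant of order $n^{28}$--$n^{29}$ rather than the stated $n^{27}$; the paper instead uses that $K$ sits inside the simplex $S_1$ and applies the Gr\"unbaum-based Lemma~\ref{voldifffromsimplex}~(i) to the homothety defect measured from the centroid. These last points are repairable, but the missing stability proposition is a genuine gap.
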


Let us consider the optimality of the order of the estimates in Theorems~\ref{Lowner-stab} and \ref{John-stab}.
For  Theorem~\ref{Lowner-stab} (i) and (ii),
adding an {$(n+2)$nd} vertex $v_{n+2}\in S^{n-1}$ to the $n+1$ vertices $v_1,\ldots,v_{n+1}$ of $\Delta_n$
with $\angle(v_{n+2},v_1)=c_1\varepsilon$ for suitable $c_1>0$ depending on $n$
and $v_1$ lying on the geodesic arc on $S^{n-1}$ connecting $v_2$ and
$v_{n+2}$, the polytope $K={\rm conv}\{v_1,\ldots,v_{n+2}\}$ satisfies
$\ell(K)\geq (1-\varepsilon)\ell(\Delta_n)$ on the one hand, and
 $\delta_{\rm vol}(K,T\Delta_n)\geq c_2 \varepsilon$ and $\delta_H(K,T\Delta_n)\geq c_2 \varepsilon$
for suitable $c_2>0$ depending on $n$ and for any $T\in O(n)$ on the other hand.
Similarly, using the polar of this polytope $K$ for Theorem~\ref{John-stab} (i), possibly after decreasing $c_1$, we have
$\ell(K^\circ)\leq (1+\varepsilon)\ell(\Delta_n^\circ)$ while
$\delta_{\rm vol}(K^\circ,T\Delta_n^\circ)\geq c_3\varepsilon$
for suitable $c_3>0$ depending on $n$ and for any $T\in O(n)$.
Finally, we consider the optimality of Theorem~\ref{John-stab} (ii). Cutting off $n+1$ regular simplices of edge length
$c_4\sqrt[n]{\varepsilon}$ at the vertices of
$\Delta_n^\circ$, for suitable $c_4>0$ depending on $n$,  results in a polytope $\widetilde{K}$
satisfying $\ell(\widetilde{K})\leq (1+\varepsilon)\ell(\Delta_n^\circ)$ and
$\delta_H(\widetilde{K},T\Delta_n^\circ)\geq c_5\sqrt[n]{\varepsilon}$
for suitable $c_5>0$ depending on $n$ and for any $T\in O(n)$.

We did not make an attempt to optimize the {constants $c$ that depend on $n$}, but observe that
the {$c$} is polynomial in $n$ in Theorem~\ref{John-stab} (ii).

In the case of the mean width, we have the following stability versions of Theorem~\ref{meanw-non-sym}.

\begin{coro}
\label{mean-width-stab}
Let $K$ be convex body  in $\R^n$.
\begin{enumerate}
\item[{\rm (i)}] If $B^n$ is the John ellipsoid of $K\supset B^n$ and $W(K)\geq (1-\varepsilon)W(\Delta_n^\circ)$
for some $\varepsilon\in(0,1)$, then
there exists a $T\in O(n)$ such that
$\delta_H(K,T\Delta_n^\circ)\leq c\sqrt[4]{\varepsilon}$ for $c=n^{27n}$.
\item[{\rm (ii)}] If $B^n$ is the {L\"owner} ellipsoid of $K\subset B^n$ and $W(K)\leq (1+\varepsilon)W(\Delta_n)$
for some $\varepsilon>0$, then there exists a $T\in O(n)$ such that
$\delta_H(K,T\Delta_n)\leq c\sqrt[4n]{\varepsilon}$ for $c=n^{29}$.

\end{enumerate}
\end{coro}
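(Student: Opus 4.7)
My plan is to derive Corollary~\ref{mean-width-stab} from Theorems~\ref{Lowner-stab} and~\ref{John-stab} by passing to the polar body. The key observation is that the identity~\eqref{mean-ell}, specialised to $\Delta_n$ and $\Delta_n^\circ$, also gives $\ell(\Delta_n)=\frac{\ell(B^n)}{2}W(\Delta_n^\circ)$ and $\ell(\Delta_n^\circ)=\frac{\ell(B^n)}{2}W(\Delta_n)$, so the multiplicative deviation $\varepsilon$ transfers without loss between a mean-width hypothesis on $K$ and the corresponding $\ell$-norm hypothesis on $K^\circ$. Combined with the duality between John and L\"owner ellipsoids recorded in the introduction ($B^n$ is the John ellipsoid of $K$ if and only if it is the L\"owner ellipsoid of $K^\circ$), this places $K^\circ$ into exactly the hypothesis class of the stability theorems already proved.

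Concretely, for part~(i), $K\supset B^n$ with John ellipsoid $B^n$ forces $K^\circ\subset B^n$ with L\"owner ellipsoid $B^n$, and $W(K)\geq(1-\varepsilon)W(\Delta_n^\circ)$ translates into $\ell(K^\circ)\geq(1-\varepsilon)\ell(\Delta_n)$; Theorem~\ref{Lowner-stab}(ii) then produces $T\in O(n)$ with $\delta_H(K^\circ,T\Delta_n)\leq n^{26n}\sqrt[4]{\varepsilon}$. For part~(ii), $K\subset B^n$ with L\"owner ellipsoid $B^n$ forces $K^\circ\supset B^n$ with John ellipsoid $B^n$, and $W(K)\leq(1+\varepsilon)W(\Delta_n)$ becomes $\ell(K^\circ)\leq(1+\varepsilon)\ell(\Delta_n^\circ)$; Theorem~\ref{John-stab}(ii) then delivers $T\in O(n)$ with $\delta_H(K^\circ,T\Delta_n^\circ)\leq n^{27}\sqrt[4n]{\varepsilon}$.

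It remains to transport the Hausdorff bound from the polar bodies back to the originals, for which I plan to use the standard polar Lipschitz estimate $\delta_H(L_1^\circ,L_2^\circ)\leq r^{-2}\delta_H(L_1,L_2)$ valid whenever $rB^n\subset L_1,L_2$ (proved by rewriting $L_i\subset L_j+\eta B^n$ as $L_i\subset(1+\eta/r)L_j$, dualising to $L_j^\circ\subset(1+\eta/r)L_i^\circ$, and combining with $L_i^\circ\subset r^{-1}B^n$). In case~(i), John's theorem applied to $K$ and to $\Delta_n^\circ$ gives $K,\Delta_n^\circ\subset nB^n$ and hence $K^\circ,T\Delta_n\supset n^{-1}B^n$, so the polar transfer costs a factor $n^2$ and yields $\delta_H(K,T\Delta_n^\circ)\leq n^{26n+2}\sqrt[4]{\varepsilon}\leq n^{27n}\sqrt[4]{\varepsilon}$. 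In case~(ii) both $K^\circ$ and $T\Delta_n^\circ$ already contain $B^n$, so $r=1$ and the passage from $K^\circ$ to $K$ is essentially free, leaving ample slack inside the constant $n^{29}$. I do not anticipate any genuine obstacle; once the polar Lipschitz inequality is in hand, the entire argument is a routine dualisation of the already established stability theorems.
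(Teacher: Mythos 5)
Your proposal is correct and follows essentially the same route as the paper: translate the mean-width hypothesis on $K$ into an $\ell$-norm hypothesis on $K^\circ$ via \eqref{mean-ell} and the John--L\"owner duality, apply Theorem~\ref{Lowner-stab}(ii) resp.\ Theorem~\ref{John-stab}(ii) to $K^\circ$, and transfer the Hausdorff bound back by the polar Lipschitz estimate, which is precisely the paper's Lemma~\ref{polar-Hausdorff} (your general-$r$ version with $r=1$ in case (ii) even gives a slightly better constant than the stated $n^{29}$). No gaps.
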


For the optimality of Corollary~\ref{mean-width-stab} (i), cutting off $n+1$ regular simplices of edge length
$c_1\varepsilon$ at the vertices of
$\Delta_n^\circ$ for suitable $c_1>0$ depending on $n$  results in a polytope $K$
satisfying $W(K)\geq (1-\varepsilon)W(\Delta_n^\circ)$ and
$\delta_H(K,T\Delta_n^\circ)\geq c_2\varepsilon$
for suitable $c_2>0$ depending on $n$ and for any $T\in O(n)$. Concerning Corollary~\ref{mean-width-stab} (ii),
let $v_1,\ldots,v_{n+1}$ be the vertices of $\Delta_n$, and let $\widetilde{K}$ be the polytope whose vertices are
$v_i,-(\frac1n+c_3\sqrt[n]{\varepsilon})v_i$ for $i=1,\ldots,n+1$ for suitable $c_3>0$ depending on $n$
in a way such that $W(\widetilde{K})\leq (1+\varepsilon)W(\Delta)$. It follows that
$\delta_H(K,T\Delta_n)\geq c_4\sqrt[n]{\varepsilon}$ for any $T\in O(n)$ and
for a suitable $c_4>0$ depending on $n$.

\bigskip

We also have the following stronger form of Theorem ~\ref{meanw-isotropic} in the form of stability {statements}.

\begin{theo}
\label{meanw-isotropicstab}
Let $\mu$ be a centered isotropic measure on the unit sphere $S^{n-1}$, let $c=n^{28n}$, and let $\varepsilon\in(0,1)$.
If one of the conditions
\begin{enumerate}
\item[{\rm (a)}] $\ell(Z_\infty(\mu))\geq (1-\varepsilon)\ell(\Delta_n)$ or
\item[{\rm (b)}] $W(Z_\infty(\mu)^\circ)\geq (1-\varepsilon)W(\Delta_n^\circ)$ or
\item[{\rm (c)}] $\ell(Z_\infty(\mu)^\circ)\leq (1+\varepsilon)\ell(\Delta_n^\circ)$ or
\item[{\rm (d)}] $W(Z_\infty(\mu))\leq (1+\varepsilon)W(\Delta_n)$
\end{enumerate}
is satisfied, then
there exists a regular simplex with vertices $w_1,\ldots,w_{n+1}\in S^{n-1}$ such that
$$
\delta_H({\rm supp}\,\mu,\{w_1,\ldots,w_{n+1}\})\leq c\,\varepsilon^{\frac14}.
$$
\end{theo}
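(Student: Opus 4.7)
The plan is to derive Theorem~\ref{meanw-isotropicstab} by applying Theorems~\ref{Lowner-stab} and~\ref{John-stab} to the convex body $K_\mu := Z_\infty(\mu)$ or to its polar $K_\mu^\circ$, and then translating body-level stability into support-level stability. First, I would apply the identity (\ref{mean-ell}) once to $K = K_\mu$ and once to $K = K_\mu^\circ$ to see that condition (a) is equivalent to (b) and that (c) is equivalent to (d), reducing to two essential cases. Second, I would invoke Lemma~\ref{discrete-iso} to extract a discrete centered isotropic submeasure $\mu_0$ with ${\rm supp}\,\mu_0 \subseteq {\rm supp}\,\mu$; John's characterization (\ref{John-iso})--(\ref{John-centered}) applied to $\mu_0$ identifies $B^n$ as the L\"owner ellipsoid of ${\rm conv}({\rm supp}\,\mu_0)$, and sandwiching ${\rm conv}({\rm supp}\,\mu_0) \subseteq K_\mu \subseteq B^n$ promotes this to the statement that $B^n$ is the L\"owner ellipsoid of $K_\mu$ as well, hence (by duality) the John ellipsoid of $K_\mu^\circ$.

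Under (a), I would apply Theorem~\ref{Lowner-stab}(ii) directly to $K_\mu$ to obtain $T\in O(n)$ with $\delta_H(K_\mu, T\Delta_n) \leq n^{26n}\varepsilon^{1/4}$. Under (c), I would apply Theorem~\ref{John-stab}(i) to $K_\mu^\circ$ to get $\delta_{\rm vol}(K_\mu^\circ, T\Delta_n^\circ) \leq n^{27n}\varepsilon^{1/4}$, and then convert this to a Hausdorff estimate $\delta_H(K_\mu, T\Delta_n) \leq n^{28n}\varepsilon^{1/4}$ through a polar-duality argument that exploits $B^n \subseteq K_\mu^\circ, T\Delta_n^\circ$ and $K_\mu, T\Delta_n \subseteq B^n$, absorbing polynomial-in-$n$ losses into the constant.

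The geometric core then converts $\delta_H(K_\mu, T\Delta_n) \leq \delta$ into a Hausdorff bound on ${\rm supp}\,\mu$, where $w_1,\dots,w_{n+1}\in S^{n-1}$ are the vertices of $T\Delta_n$. For any $v\in{\rm supp}\,\mu\subseteq S^{n-1}$ there is $p\in T\Delta_n$ with $\|v-p\|\leq\delta$, whence $\|p\|\geq 1-\delta$; writing $p = \sum_i\lambda_i w_i$ with $\lambda_i\geq 0$ and $\sum_i\lambda_i=1$ and using $\langle w_i, w_j\rangle = -\tfrac{1}{n}$ for $i\neq j$, the identity $\|p\|^2 = (1+\tfrac{1}{n})\sum_i\lambda_i^2 - \tfrac{1}{n}$ forces $\sum_i\lambda_i^2 \geq 1 - O(\delta)$ and hence $\max_i\lambda_i \geq 1 - O(\delta)$, placing $v$ within $O(\delta)$ of some vertex. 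Conversely, for each $w_i$ the support-function bound $h_{K_\mu}(w_i) \geq h_{T\Delta_n}(w_i) - \delta = 1 - \delta$ produces $v^*\in{\rm supp}\,\mu$ with $\langle w_i, v^*\rangle \geq 1 - \delta$, giving initially only $\|v^* - w_i\| \leq \sqrt{2\delta}$; applying the previous step to $v^*$ places $v^*$ within $O(\delta)$ of some $w_j$, and since $\|w_i - w_j\| = \sqrt{2 + 2/n}$ for $i\neq j$, the triangle inequality excludes $j\neq i$ once $\delta$ is small enough, improving the bound to $\|v^* - w_i\| \leq O(\delta)$.

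The main obstacle will be case (c): using Theorem~\ref{John-stab}(ii) instead of (i) would only yield the exponent $\varepsilon^{1/(4n)}$ on the Hausdorff distance of polars, which would propagate and spoil the target rate, so I must commit to Theorem~\ref{John-stab}(i) and engineer the polar-to-primal Hausdorff conversion carefully. A secondary nuisance is that the triangle-inequality exclusion above requires $\delta$ to be below a threshold of order $1/n$; whenever $\delta$ exceeds this threshold, the bound $\delta_H({\rm supp}\,\mu, \{w_1,\dots,w_{n+1}\}) \leq 2$ is trivial and can be absorbed into the final constant $c = n^{28n}$.
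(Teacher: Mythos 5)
Your handling of cases (a), (b) is correct and takes a genuinely different route from the paper: after invoking Lemma~\ref{discrete-iso} and John's condition to see that $B^n$ is the L\"owner ellipsoid of $Z_\infty(\mu)$, you apply Theorem~\ref{Lowner-stab}(ii) as a black box and then convert body stability into support stability by the barycentric argument (a unit vector inside $(1+\xi)T\Delta_n$ must lie near a vertex, and $h_{Z_\infty(\mu)}(w_i)\ge 1-\delta$ supplies a support point near each vertex, upgraded from $\sqrt{2\delta}$ to $O(\delta)$ by the separation of the vertices); with $5n^{26n}\le n^{28n}$ and the trivial bound $\delta_H\le 2$ above the smallness threshold, the constants close. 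The paper instead re-derives the support statement directly from Proposition~\ref{uiwjdist}, Lemma~\ref{closefarsimplex} and the Meyer--Reisner inequality $V(L)V(L^\circ)\ge V(\Delta_n)V(\Delta_n^\circ)$; your route is shorter but leans on the full strength of Theorem~\ref{Lowner-stab}, which the paper proves in the same breath.

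The gap is in cases (c), (d). You propose to turn $\delta_{\rm vol}(Z_\infty(\mu)^\circ,T\Delta_n^\circ)\le n^{27n}\varepsilon^{1/4}$ into $\delta_H(Z_\infty(\mu),T\Delta_n)\le n^{28n}\varepsilon^{1/4}$ ``through a polar-duality argument that exploits $B^n\subseteq Z_\infty(\mu)^\circ,\,T\Delta_n^\circ$ and $Z_\infty(\mu),\,T\Delta_n\subseteq B^n$.'' No such conversion exists at this rate. The direction $T\Delta_n\subseteq Z_\infty(\mu)+\delta'B^n$ is fine (outer volume deficiency is linear, Lemma~\ref{voldifffromsimplex}(ii)), but the direction $Z_\infty(\mu)\subseteq T\Delta_n+\delta'B^n$ corresponds to the \emph{inner} deficiency of $Z_\infty(\mu)^\circ$ inside $T\Delta_n^\circ$, which a volume bound controls only up to an $n$-th root (Lemma~\ref{voldifffromsimplex}(i)); this loss is exactly why Theorem~\ref{John-stab}(ii) carries the exponent $\varepsilon^{1/(4n)}$, and it is sharp: take $M=\Delta_n^\circ$ with one vertex truncated at relative depth $h\asymp\varepsilon^{1/n}$. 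Then $B^n$ is still the John ellipsoid of $M$, $\ell(M)\le(1+\varepsilon)\ell(\Delta_n^\circ)$, $\delta_{\rm vol}(M,\Delta_n^\circ)\asymp\varepsilon$, and all the ball containments you list hold, yet $M^\circ={\rm conv}\bigl(\Delta_n\cup\{-w_1/(n(1-h))\}\bigr)$ protrudes beyond $\Delta_n$ by about $h/n\asymp\varepsilon^{1/n}/n$, which for small $\varepsilon$ (and $n\ge 5$) is far larger than $n^{28n}\varepsilon^{1/4}$. So any argument using only the data you invoke would prove a false statement; the example is not a counterexample to the theorem only because $M$ is not of the form $Z_\infty(\mu)^\circ$: its polar has an extreme point off $S^{n-1}$.

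That is precisely the structural input your plan omits and the paper's Section~\ref{sec:a10} (and Lemma~\ref{closefarsimplex}) uses: every $u_0\in{\rm supp}\,\mu$ is a \emph{unit} vector, so the halfspace $\{x:\langle x,u_0\rangle\le 1\}$ containing $Z_\infty(\mu)^\circ$ is tangent to $B^n$, and if $u_0$ is at angle $\zeta$ from all approximating vertices it cuts off a cap of the circumscribed simplex of volume at least $\tfrac{\zeta}{2^{n+2}n^{2n}}V(\Delta_n^\circ)$ --- linear in $\zeta$, not $\zeta^n$. Comparing this with the volume estimate $V(S_1\setminus Z_\infty(\mu)^\circ)\le n^{23n}\varepsilon^{1/4}$ (which is essentially the content of your appeal to Theorem~\ref{John-stab}(i)) yields $\zeta\le n^{28n}\varepsilon^{1/4}$. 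Without this tangency/cap argument (or an equivalent use of the fact that all extreme points of $Z_\infty(\mu)$ lie on $S^{n-1}$), your cases (c), (d) do not go through.
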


The proofs of Theorem~\ref{Lowner-stab} and Theorem \ref{meanw-isotropicstab} (a), (b) are based on Proposition~\ref{uiwjdist},
which is the special case of Theorem \ref{meanw-isotropicstab} (a) for a discrete measure. In addition, a new stability version of Barthe's reverse of the Brascamp-Lieb inequality is required for a special parametric class of functions, which is derived in Section~\ref{sec6}. In a similar vein, the proofs of Theorem~\ref{John-stab} and Theorem \ref{meanw-isotropicstab} (c), (d) are based on Proposition~\ref{uiwjdist0}, which is the special case of Theorem \ref{meanw-isotropicstab} (c) for a discrete measure. In addition, we {use} and derive a stability version of the Brascamp-Lieb inequality  for a special parametric class of functions (see also Section~\ref{sec6}).

We note that our arguments are based on the rank one {geometric} Brascamp-Lieb and reverse Brascamp-Lieb inequalities
(see Section~\ref{secbrascamp-lieb}), and their stability versions in a special case (see Section~\ref{sec6}). Unfortunately, no quantitative stability version of the Brascamp-Lieb and reverse Brascamp-Lieb inequalities are known in general
(see Bennett, Bez, Flock, Lee \cite{BBFL18} for a certain weak stability version for higher ranks).
 On the other hand, in the case
of the Borell-Brascamp-Lieb inequaliy (see Borell \cite{Bor75},  Brascamp, Lieb \cite{BrL76} and Balogh, {Krist\'aly} \cite{BaK18}),
 stability versions were {proved} by Ghilli, Salani \cite{GhS17} and Rossi, Salani \cite{RoS171}.

\section{Discrete isotropic measures and the  (reverse) Brascamp-Lieb inequality
	}
\label{secbrascamp-lieb}

For the purposes of this paper, the study of $Z_\infty(\mu)$ for centered isotropic
measures on $S^{n-1}$ can be reduced to the case when $\mu$ is discrete. Writing $|X|$ for the cardinality of a finite set $X$,
we recall that
Lemma~10.1 in B\"or\"oczky, Hug \cite{BoH17} states that for any centered isotropic measure $\mu$, there exists a discrete centered isotropic measure $\mu_0$ on $S^{n-1}$ with ${\rm supp}\,\mu_0\subset {\rm supp}\,\mu$
and $|{\rm supp}\,\mu_0|\leq \frac{n(n+3)}2+1$. We use this statement in the following form.

\begin{lemma}
\label{discrete-iso}
For any centered isotropic measure $\mu$ on $S^{n-1}$,
 there exists a  discrete centered isotropic measure
$\mu_0$ on $S^{n-1}$ such that
$$
{\rm supp}\,\mu_0\subset {\rm supp}\,\mu\mbox{ \ and \ }
|{\rm supp}\,\mu_0|\leq 2n^2.
$$
\end{lemma}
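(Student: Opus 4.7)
The plan is to recast the lemma as a finite-dimensional Carath\'eodory argument in a vector space that simultaneously encodes the isotropy and centering conditions. Define the continuous map $\Phi\colon S^{n-1}\to\R^{n(n+3)/2}$ by $\Phi(u)=(u\otimes u,u)$, where the symmetric matrix $u\otimes u$ is identified with its $n(n+1)/2$ independent entries and joined with the $n$ coordinates of $u$. A finite Borel measure $\nu$ on $S^{n-1}$ is then centered and isotropic precisely when
$$
\int_{S^{n-1}}\Phi(u)\,\nu(du)=(\Id_n,o).
$$
Applied to $\mu$, this identity expresses $(\Id_n,o)$ as an ``integral'' positive combination of the vectors $\Phi(u)$, $u\in{\rm supp}\,\mu$; since the barycenter of a compactly supported finite measure lies, up to the total mass factor $\mu(S^{n-1})=n$, in the convex hull of its support, the point $(\Id_n,o)$ belongs to the convex cone generated by the compact set $\Phi({\rm supp}\,\mu)$.

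Because $\Phi({\rm supp}\,\mu)$ is compact and bounded away from the origin (as $u\otimes u\neq 0$ for every $u\in S^{n-1}$), the cone it generates is closed and coincides with the set of all finite nonnegative combinations of its elements. The conic form of Carath\'eodory's theorem in $\R^N$ with $N=n(n+3)/2$ then yields a representation
$$
(\Id_n,o)=\sum_{i=1}^k c_i\,\Phi(u_i),\qquad c_i>0,\ u_i\in{\rm supp}\,\mu,\ k\leq N.
$$
Setting $\mu_0:=\sum_{i=1}^k c_i\,\delta_{u_i}$ produces a discrete measure on $S^{n-1}$ with ${\rm supp}\,\mu_0\subset{\rm supp}\,\mu$ that is centered and isotropic by construction.

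The announced bound $|{\rm supp}\,\mu_0|\leq 2n^2$ follows from the elementary inequality $n(n+3)/2\leq 2n^2$, equivalent to $3n\leq 3n^2$, which holds for every $n\geq 1$. I do not anticipate any substantive obstacle: the argument is essentially the content of Lemma~10.1 in \cite{BoH17}, cited just above the statement, which gives the slightly sharper bound $n(n+3)/2+1$ (already comfortably below $2n^2$ for $n\geq 2$); the rougher bound $2n^2$ is a single monomial in $n$ and is sufficient for the applications later in the paper.
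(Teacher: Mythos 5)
Your argument is correct, but it is not the route the paper takes: the paper gives no proof at all, it simply invokes Lemma~10.1 of B\"or\"oczky--Hug \cite{BoH17}, which already yields the sharper bound $n(n+3)/2+1$, and then observes that $2n^2$ is a weaker bound. What you have written is essentially the standard proof lying behind that citation: encode isotropy and centering as the single linear condition $\int\Phi\,d\mu=(\Id_n,o)$ with $\Phi(u)=(u\otimes u,u)\in\R^{n(n+3)/2}$, note that $(\Id_n,o)/n$ is the barycenter of a probability measure supported on the compact set $\Phi({\rm supp}\,\mu)$ and hence lies in its convex hull (which is compact, so closed convex hull and convex hull coincide in $\R^N$), and then reduce the number of terms by Carath\'eodory; the conic version gives $k\le n(n+3)/2$, marginally better than the cited $n(n+3)/2+1$, and both are below $2n^2$. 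So your proposal makes the lemma self-contained, at the cost of reproving what the paper outsources.

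One caveat on your second paragraph: the assertion that a compact set bounded away from the origin generates a \emph{closed} convex cone is not a valid general principle. Closedness of ${\rm cone}(S)$ for compact $S$ is guaranteed when $o\notin{\rm conv}\,S$, but it can fail when $o\in{\rm conv}\,S\setminus S$ (one can build a compact $S\subset\R^3$ with $o\notin S$ whose generated cone misses a limit direction). Fortunately this step is superfluous in your argument: since the barycenter already lies in the \emph{algebraic} convex hull of the compact set $\Phi({\rm supp}\,\mu)$, the point $(\Id_n,o)$ is from the outset a finite nonnegative combination of vectors $\Phi(u_i)$ with $u_i\in{\rm supp}\,\mu$, and the conic Carath\'eodory theorem applies directly to that representation. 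With that sentence deleted (or replaced by this remark), the proof is complete; the resulting $u_i$ are distinct because the chosen $\Phi(u_i)$ are linearly independent and $\Phi$ is injective, so $|{\rm supp}\,\mu_0|\le n(n+3)/2\le 2n^2$ as claimed.
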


The rank one geometric Brascamp-Lieb inequality \eqref{BL} was identified by   Ball \cite{Bal89}
as an important case
of the rank one Brascamp-Lieb inequality proved originally
by Brascamp, Lieb \cite{BrL76}. In addition,
 the reverse Brascamp-Lieb inequality \eqref{RBL} is due to Barthe \cite{Bar97,Bar98}.
To set up \eqref{BL} and \eqref{RBL},
let the distinct unit vectors $u_1,\ldots,u_k\in S^{n-1}$  and $c_1,\ldots,c_k>0$ satisfy 
\begin{equation}
\label{ciui}
\sum_{i=1}^kc_i u_i\otimes u_i=\Id_n.
\end{equation}
If $f_1,\ldots,f_k$ are non-negative measurable functions on $\R$, then
the Brascamp-Lieb inequality  states that
\begin{equation}
\label{BL}
\int_{\R^n}\prod_{i=1}^kf_i(\langle x,u_i\rangle)^{c_i}\,dx \leq
\prod_{i=1}^k\left(\int_{\R}f_i\right)^{c_i}, 
\end{equation}
and the reverse Brascamp-Lieb inequality is given by
\begin{equation}
\label{RBL}
\int_{\R^n}^*\sup_{x=\sum_{i=1}^kc_i\theta_iu_i}\prod_{i=1}^kf_i(\theta_i)^{c_i}\,dx \geq
\prod_{i=1}^k\left(\int_{\R}f_i\right)^{c_i},
\end{equation}
where the star on the left-hand-side denotes the upper integral.
Here we always assume that $\theta_1,\ldots,\theta_k\in\R$ in (\ref{RBL}). We note that $\theta_1,\ldots,\theta_k$ are unique if $k=n$ and hence $u_1,\ldots,u_n$ is an orthonormal basis.

\bigskip

It was proved by Barthe \cite{Bar98} that equality in (\ref{BL}) or in (\ref{RBL}) implies that if none of the functions $f_i$ is identically zero or a scaled version of a  Gaussian, then there exists an origin symmetric regular crosspolytope in $\R^n$ such that $u_1,\ldots,u_k$ lie among its vertices. Conversely, we note that equality holds in (\ref{BL}) and (\ref{RBL}) if either each $f_i$ is a scaled version of the same centered Gaussian,
or if $k=n$ and $u_1,\ldots,u_n$ form an  orthonormal basis.

For a detailed discussion of the rank one Brascamp-Lieb inequality, we refer to
Carlen, Cordero-Erausquin \cite{CCE09}. The higher rank case,
due to Lieb \cite{Lie90}, is reproved and further explored by Barthe \cite{Bar98}.
Equality in the general version of the Brascamp-Lieb inequality is clarified by
Bennett, Carbery, Christ, Tao \cite{BCCT08}. In addition, Barthe, Cordero-Erausquin, Ledoux,
Maurey (see \cite{BCLM11}) develop an approach for the Brascamp-Lieb inequality via Markov semigroups in a quite general framework.

The fundamental papers by Barthe \cite{Bar97, Bar98}  provided concise proofs of \eqref{BL} and \eqref{RBL}
based on mass transportation (see Ball \cite{Bal03} for a sketch in the case of \eqref{BL}).
Actually, the reverse Brascamp-Lieb inequality \eqref{RBL} seems to be the first inequality whose original proof is via mass transportation.
During the argument in Barthe \cite{Bar98}, the following four observations due to K.M.~Ball \cite{Bal89}
(see also \cite{Bar98} for a simpler proof of (i)) play crucial roles:
If $k\geq n$, $c_1,\ldots,c_k>0$ and  $u_1,\ldots,u_k\in S^{n-1}$ satisfy
(\ref{ciui}), then
\begin{description}
\item{(i)} for any $t_1,\ldots,t_k>0$, we have
\begin{equation}
\label{BallBarthe}
\det \left(\sum_{i=1}^kt_ic_i u_i\otimes u_i\right)\geq \prod_{i=1}^k t_i^{c_i},
\end{equation}

\item{(ii)}
for $z=\sum_{i=1}^kc_i\theta_i u_i$ with
$\theta_1,\ldots,\theta_k\in\R$, we have
\begin{equation}
\label{BLRBLquad}
\|z\|^2\le \sum_{i=1}^kc_i\theta_i^2,
\end{equation}

\item{(iii)} for $i=1,\ldots,k$, we have
\begin{equation*}
c_i\leq 1,
\end{equation*}

\item{(iv)} {and it holds that}
\begin{equation}
\label{cisum}
c_1+\cdots+c_k=n.
\end{equation}

\end{description}
Inequality (\ref{BallBarthe}) is called the Ball-Barthe inequality by Lutwak, Yang, Zhang \cite{Lutwak1} and Li, Leng \cite{LiL12}.

\section{Review of the proof of the  (reverse) Brascamp-Lieb inequality 
	if all $f_i=f$ and $f$ is log-concave}
\label{secBLRBL}

Let $g(t)=\sqrt{{2\pi}}^{-1}\,e^{-t^2/2}$, $t\in\R$, be the standard Gaussian density (mean zero, variance one), and let $f$
be a probability density function on $\R$ (here we restrict to log-concave {functions} to avoid  differentiability issues). Let $T$ and $S$ be the transportation maps which are determined by
$$
\int_{-\infty}^x f=\int_{-\infty}^{T(x)} g\quad
\mbox{ \ and \ }\quad \int_{-\infty}^{S(y)} f=\int_{-\infty}^{y} g.
$$
{Henceforth,} we do not write the arguments and the Lebesgue measure {in} the integral if the meaning
of the integral is unambiguous.
As $f$ is log-concave, there exists an open interval $I$ such that $f$ is positive on $I$ and zero on the complement of the closure of $I$, and $T: I\to\R$ and $S:\, \R\to I$ are inverses of each other. In addition, for $x\in I$ and $y\in\R$ we have
\begin{equation}
\label{masstrans}
f(x)=g(T(x))\, T'(x) \quad \mbox{ \ and \ } \quad g(y)=f(S(y))\, S'(y).
\end{equation}

For
$$
{\cal C}=\{x\in\R^n:\, \langle u_i,x\rangle\in I\;\text{ for } i=1,\ldots,k\},
$$
we consider the transformation $\Theta:{\cal C}\to\R^n$ with
$$
\Theta(x)=\sum_{i=1}^kc_i\, T(\langle u_i,x\rangle )\,u_i,\qquad x\in {\cal C},
$$
which satisfies
$$
d\Theta(x)=\sum_{i=1}^kc_i\, T'(\langle u_i,x\rangle )\,u_i\otimes u_i.
$$
It is {known} that $d\Theta$ is positive definite and $\Theta:{\cal C}\to\R^n$ is
 injective (see \cite{Bar97,Bar98}).
Therefore, using first \eqref{masstrans},  then (i) with $t_i=T'(\langle u_i,x\rangle)$,
and then the definition of $\Theta$ and  (ii), the following argument leads to the Brascamp-Lieb inequality
in this special case:
\begin{align*}
\nonumber
&\int_{\R^n}\prod_{i=1}^kf(\langle u_i,x\rangle)^{c_i}\,dx\\
&\qquad=
\int_{{\cal C}}\prod_{i=1}^kf(\langle u_i,x\rangle)^{c_i}\,dx\\
&\qquad=
\int_{{\cal C}}\left(\prod_{i=1}^kg(T(\langle u_i,x\rangle))^{c_i}\right)
\left(\prod_{i=1}^kT'(\langle u_i,x\rangle)^{c_i}\right)\,dx\\
&\qquad\leq \frac{1}{(2\pi)^{\frac{n}2}}\int_{{\cal C}}\left(\prod_{i=1}^ke^{-c_iT(\langle u_i,x\rangle)^2/2}\right)
 \det\left(\sum_{i=1}^kc_iT'(\langle u_i,x\rangle )\,u_i\otimes u_i\right)\,dx\\
&\qquad\leq  \frac{1}{(2\pi)^{\frac{n}2}}\int_{{\cal C}}e^{-\|\Theta(x)\|^2/2}\det\left( d\Theta(x)\right)\,dx\\
\nonumber
&\qquad\leq \frac{1}{(2\pi)^{\frac{n}2}} \int_{\R^n}e^{-\|y\|^2/2}\,dy=1.
\end{align*}
We note that the Brascamp-Lieb inequality (\ref{BLf}) for an arbitrary non-negative log-concave function $f$ follows by scaling; namely, (iv) implies
\begin{equation}
\label{BLf}
\int_{\R^n}\prod_{i=1}^kf(\langle x,u_i\rangle)^{c_i}\,dx\leq
\left(\int_{\R}f\right)^{n}.
\end{equation}

For the reverse Brascamp-Lieb inequality, we observe that
$$
d\Psi(x)=\sum_{i=1}^kc_iS'(\langle u_i,x\rangle )\,u_i\otimes u_i
$$
holds for the differentiable map $\Psi:\R^n\to \R^n$ with
$$
\Psi(x)=\sum_{i=1}^kc_iS(\langle u_i,x\rangle )\,u_i,\qquad x\in\R^n.
$$
In particular, $d\Psi$ is positive definite and $\Psi:\R^n\to\R^n$ is
 injective (see \cite{Bar97,Bar98}).
Therefore (i) and (\ref{masstrans}) lead to
\begin{align*}
\nonumber
&\int_{\R^n}^*\sup_{x=\sum_{i=1}^kc_i\theta_iu_i}\prod_{i=1}^kf(\theta_i)^{c_i}\,dx\\
&\qquad \geq
\int_{\R^n}^*\left(\sup_{\Psi(y)=\sum_{i=1}^kc_i\theta_iu_i}\prod_{i=1}^kf(\theta_i)^{c_i}\right)
\det\left( d\Psi(y)\right)\,dy\\
&\qquad\geq  \int_{\R^n}\left(\prod_{i=1}^kf(S(\langle u_i,y\rangle))^{c_i} \right)
\det\left(\sum_{i=1}^kc_iS'(\langle u_i,y\rangle )\,u_i\otimes u_i\right)\,dy\\
&\qquad\geq \int_{\R^n}\left(\prod_{i=1}^kf(S(\langle u_i,y\rangle))^{c_i}\right)
\left(\prod_{i=1}^kS'(\langle u_i,y\rangle)^{c_i}\right)\,dy\\
\nonumber
&\qquad= \int_{\R^n}\left(\prod_{i=1}^kg(\langle u_i,y\rangle)^{c_i}\right)
\,dy=  \frac{1}{(2\pi)^{\frac{n}2}}\int_{\R^n}e^{- \|y\|^2/2}\,dy=1.
\end{align*}
Again, the reverse Brascamp-Lieb inequality (\ref{RBLf}) for an arbitrary non-negative log-concave function $f$ follows by scaling and (iv); namely,
\begin{equation}
\label{RBLf}
{\int^*_{\R^n}}\sup_{x=\sum_{i=1}^kc_i\theta_iu_i}\prod_{i=1}^kf(\theta_i)^{c_i}\,dx\geq
\left(\int_{\R}f\right)^{n}.
\end{equation}

We observe that (i) shows that the optimal factor in the geometric Brascamp-Lieb inequaliy and
in its reverse form is $1$.

\section{Observations on the stability of the Brascamp-Lieb inequality and its reverse}

This section summerizes certain stability forms of the Ball-Barthe inequality (\ref{BallBarthe})
based on work in B\"or\"oczky, Hug \cite{BoH17}.
The first step is a stability version of the Ball-Barthe inequality (\ref{BallBarthe}) proved in \cite{BoH17}.

\begin{lemma}
\label{Ball-Barthe-stab0}
If $k\geq n+1$, $t_1,\ldots,t_k>0$, $c_1,\ldots,c_k>0$ and  $u_1,\ldots,u_k\in S^{n-1}$ satisfy
(\ref{ciui}), then
$$
\det\left( \sum_{i=1}^kt_ic_iu_i\otimes u_i\right)\geq \theta^* \prod_{i=1}^k t_i^{c_i},
$$
where
\begin{align*}
\theta^*&=  1+\frac12\sum_{1\leq i_1<\ldots<i_n\leq k}
c_{i_1}\cdots c_{i_n}\det[u_{i_1},\ldots,u_{i_n}]^2
\left(\frac{\sqrt{t_{i_1}\cdots  t_{i_n}}}{t_0}-1\right)^2,\\
t_0&=\sqrt{\sum_{1\leq i_1<\ldots<i_n\leq k}
t_{i_1}\cdots t_{i_n} c_{i_1}\cdots c_{i_n}\det[u_{i_1},\ldots,u_{i_n}]^2}.
\end{align*}
\end{lemma}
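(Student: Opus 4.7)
The plan is to recognize the statement as a quantitative weighted AM--GM inequality for the terms of the Cauchy--Binet expansion of the determinant, normalized by the marginal identities coming from isotropy. Let $M$ denote the $n\times k$ matrix whose $i$th column is $\sqrt{t_ic_i}\,u_i$. The Cauchy--Binet formula gives
\[
\det\Bigl(\sum_{i=1}^k t_ic_i\, u_i\otimes u_i\Bigr) \;=\; \det(MM^\top) \;=\; \sum_{I}\lambda_I\,s_I,
\]
where the sum runs over the $n$-element subsets $I=\{i_1<\cdots<i_n\}$ of $\{1,\ldots,k\}$ and
\[
\lambda_I:= c_{i_1}\cdots c_{i_n}\det[u_{i_1},\ldots,u_{i_n}]^2,\qquad s_I:= t_{i_1}\cdots t_{i_n}.
\]
Specializing $t_i\equiv 1$ together with (\ref{ciui}) yields $\sum_I\lambda_I=\det(\Id_n)=1$, and differentiating Cauchy--Binet in $c_i$ (or equivalently computing $\partial_{c_i}\log\det A(c)=\langle u_i,A(c)^{-1}u_i\rangle$ at the isotropic point, where $A(c)=\Id_n$) produces the crucial marginal identity
\[
\sum_{I\ni i}\lambda_I \;=\; c_i\qquad (i=1,\ldots,k),
\]
from which $\prod_{i=1}^kt_i^{c_i}=\prod_I s_I^{\lambda_I}$ follows. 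Writing $\beta_I:=\sqrt{s_I}/t_0$, so that $\sum_I\lambda_I\beta_I^2=1$, the assertion reduces to
\[
\Bigl(1+\tfrac12\sum_I\lambda_I(\beta_I-1)^2\Bigr)\prod_I\beta_I^{2\lambda_I}\;\le\; 1.
\]

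The remaining task is purely analytic, and this is where the technical effort goes. I would use the elementary one-variable inequality
\[
y^2-1-2\log y\;\ge\;(y-1)^2\qquad(y>0),
\]
which holds since its left-hand side minus its right-hand side equals $2(y-1-\log y)\ge 0$. Averaging this at $y=\beta_I$ against the weights $\lambda_I$ and using the constraint $\sum_I\lambda_I\beta_I^2=1$ together with the identity $\sum_I\lambda_I(\beta_I-1)^2=2(1-B)$, where $B:=\sum_I\lambda_I\beta_I\in(0,1]$ by Cauchy--Schwarz, produces
\[
\sum_I\lambda_I\log\beta_I\;\le\; B-1.
\]
Multiplying by $2$, exponentiating, and then applying the elementary bound $e^{-x}\le 1/(1+x)$ for $x=2(1-B)\ge 0$ yields
\[
\prod_I\beta_I^{2\lambda_I}\;\le\; e^{-2(1-B)}\;\le\; \frac{1}{3-2B}\;\le\; \frac{1}{2-B}\;=\;\frac{1}{1+\tfrac12\sum_I\lambda_I(\beta_I-1)^2},
\]
which is precisely the reformulated inequality.

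The step that demands the most care is the marginal identity $\sum_{I\ni i}\lambda_I=c_i$: it requires clean bookkeeping of which $n$-subsets of $\{1,\ldots,k\}$ contain a prescribed index, and it uses $\langle u_i,u_i\rangle=1$ in an essential way at the isotropic point. Once it is secured, everything reduces to the normalization above and the one-variable calculus lemma; I do not anticipate any genuine obstacle beyond keeping track of these identities. The hypothesis $k\ge n+1$ is not strictly needed for the argument but ensures that the bound is nontrivial: if $k=n$ then (\ref{ciui}) forces $u_1,\ldots,u_n$ to be an orthonormal basis with each $c_i=1$, only a single summand survives in $\theta^*$, and (\ref{BallBarthe}) already holds with equality $\theta^*=1$.
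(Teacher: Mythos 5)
Your proof is correct. Note that the paper does not prove Lemma~\ref{Ball-Barthe-stab0} itself but imports it from \cite{BoH17}, and your argument --- Cauchy--Binet expansion with $\lambda_I$, $s_I$, the marginal identity $\sum_{I\ni i}\lambda_I=c_i$ obtained from Jacobi's formula at the isotropic point, and the quantified weighted AM--GM finished via $y^2-1-2\log y\ge (y-1)^2$, $\sum_I\lambda_I(\beta_I-1)^2=2(1-B)$ and $e^{-x}\le 1/(1+x)$ --- is a valid self-contained proof along essentially the same standard lines as the cited one.
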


In order to estimate $\theta^*$ from below, we use the following observation from \cite{BoH17}.

\begin{lemma}
\label{xab}
If $a,b,x>0$, then
$$
(xa-1)^2+(xb-1)^2\geq \frac{(a^2-b^2)^2}{2(a^2+b^2)^2}.
$$
\end{lemma}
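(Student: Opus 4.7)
The plan is to reduce the inequality to a one-variable minimization problem in $x$ with $a,b>0$ fixed, and then verify the resulting inequality by elementary means. Set $F(x)=(xa-1)^2+(xb-1)^2$. Since $F$ is a quadratic in $x$ with positive leading coefficient, its minimum on $\R$ is attained at the critical point of $F'(x)=2(a^2+b^2)x-2(a+b)$, namely
$$
x_\ast=\frac{a+b}{a^2+b^2},
$$
which is strictly positive because $a,b>0$, so the constraint $x>0$ is inactive. Hence it suffices to prove the lemma at $x=x_\ast$.

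Next I would substitute $x_\ast$ back into $F$. A short calculation gives
$$
x_\ast a-1=\frac{b(a-b)}{a^2+b^2},\qquad x_\ast b-1=-\frac{a(a-b)}{a^2+b^2},
$$
so that
$$
F(x_\ast)=\frac{(a-b)^2(a^2+b^2)}{(a^2+b^2)^2}=\frac{(a-b)^2}{a^2+b^2}.
$$
Using $(a^2-b^2)^2=(a-b)^2(a+b)^2$, the claimed inequality $F(x_\ast)\ge (a^2-b^2)^2/[2(a^2+b^2)^2]$ is equivalent, after clearing the common factor $(a-b)^2$ (the case $a=b$ is trivial, both sides being $0$), to $2(a^2+b^2)\ge (a+b)^2$. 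This last inequality is just $(a-b)^2\ge 0$, which completes the argument.

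I do not expect any serious obstacle: the only point requiring a moment's thought is that $x_\ast>0$, so that the unconstrained minimum of the quadratic coincides with the infimum over the domain $x>0$, and the inequality at $x_\ast$ is a rearrangement of the AM-QM inequality. In particular, no use of the hypotheses of Section~\ref{secbrascamp-lieb} is needed; the lemma is a purely elementary statement that will be plugged into Lemma~\ref{Ball-Barthe-stab0} later to produce a quantitative lower bound on the deficit $\theta^\ast-1$.
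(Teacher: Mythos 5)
Your proof is correct. The paper itself does not prove Lemma~\ref{xab}; it is quoted without proof as an observation from B\"or\"oczky--Hug \cite{BoH17}, so there is no in-text argument to compare against. Your reduction is the natural one and it is complete: the quadratic $F(x)=(xa-1)^2+(xb-1)^2$ has its unconstrained minimizer $x_\ast=(a+b)/(a^2+b^2)>0$, so the constraint $x>0$ is indeed inactive, and the computation $F(x_\ast)=(a-b)^2/(a^2+b^2)$ is right. Note that you actually establish the sharper bound
$$
(xa-1)^2+(xb-1)^2\;\geq\;\frac{(a-b)^2}{a^2+b^2},
$$
from which the stated inequality follows since $2(a^2+b^2)\geq(a+b)^2$, i.e.\ $\frac{(a-b)^2}{a^2+b^2}\geq\frac{(a-b)^2(a+b)^2}{2(a^2+b^2)^2}=\frac{(a^2-b^2)^2}{2(a^2+b^2)^2}$; handling the case $a=b$ separately, as you do, is all that is needed when clearing the factor $(a-b)^2$. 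Your closing remark is also accurate: the lemma is purely elementary and is only used to bound the deficit $\theta^\ast-1$ in Lemma~\ref{Ball-Barthe-stab0} from below, leading to Corollary~\ref{Ball-Barthe-stab}.
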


The combination of  Lemma~\ref{Ball-Barthe-stab0} and Lemma~\ref{xab} implies the following stability version of the Ball-Barthe inequality (\ref{BallBarthe}) {that is easier to use}.

\begin{coro}
\label{Ball-Barthe-stab}
If $k\geq n+1$, $t_1,\ldots,t_k>0$, $c_1,\ldots,c_k>0$ and  $u_1,\ldots,u_k\in S^{n-1}$ satisfy
(\ref{ciui}), and there exist $\beta>0$ and $n+1$ indices $ \{i_1,\ldots,i_{n+1}\}\subset \{1,\ldots,k\}$ such that
\begin{align*}
c_{i_1}\cdots c_{i_n}\det[u_{i_1},\ldots,u_{i_n}]^2&\geq \beta,\\
c_{i_2}\cdots c_{i_{n+1}}\det[u_{i_2},\ldots,u_{i_{n+1}}]^2&\geq \beta,
\end{align*}
then
$$
\det\left( \sum_{i=1}^kt_ic_iu_i\otimes u_i\right)\geq \left( 1+\frac{\beta(t_{i_1}-t_{i_{n+1}})^2}{4(t_{i_1}+t_{i_{n+1}})^2}\right) \prod_{i=1}^k t_i^{c_i}.
$$
\end{coro}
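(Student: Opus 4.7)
My plan is to apply Lemma~\ref{Ball-Barthe-stab0} directly and then sift out just two distinguished terms in the sum defining $\theta^*$. Concretely, I would retain only the two tuples $(i_1,\ldots,i_n)$ and $(i_2,\ldots,i_{n+1})$ formed from the $n+1$ distinguished indices; every other tuple contributes a non-negative amount to $\theta^*$ and may be discarded. Using the lower bound $\beta$ on the two coefficients $c_{j_1}\cdots c_{j_n}\det[u_{j_1},\ldots,u_{j_n}]^2$ singled out in the hypothesis, this produces
$$
\theta^* \;\geq\; 1 + \frac{\beta}{2}\left[\left(\frac{\sqrt{t_{i_1}\cdots t_{i_n}}}{t_0} - 1\right)^{\!2} + \left(\frac{\sqrt{t_{i_2}\cdots t_{i_{n+1}}}}{t_0} - 1\right)^{\!2}\right].
$$

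Next I would invoke Lemma~\ref{xab} with $x = 1/t_0$, $a = \sqrt{t_{i_1}\cdots t_{i_n}}$ and $b = \sqrt{t_{i_2}\cdots t_{i_{n+1}}}$. The essential observation is that the two index tuples overlap in exactly the $n-1$ positions $i_2,\ldots,i_n$, so setting $P = t_{i_2}\cdots t_{i_n}$ we have $a^2 = P\,t_{i_1}$ and $b^2 = P\,t_{i_{n+1}}$, whence
$$
\frac{(a^2-b^2)^2}{(a^2+b^2)^2} \;=\; \frac{P^{2}(t_{i_1}-t_{i_{n+1}})^{2}}{P^{2}(t_{i_1}+t_{i_{n+1}})^{2}} \;=\; \frac{(t_{i_1}-t_{i_{n+1}})^{2}}{(t_{i_1}+t_{i_{n+1}})^{2}}.
$$
The factor $P$ cancels cleanly. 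Plugging Lemma~\ref{xab} into the bracket above yields a lower bound of $(t_{i_1}-t_{i_{n+1}})^{2}/[2(t_{i_1}+t_{i_{n+1}})^{2}]$, which combined with the prefactor $\beta/2$ produces exactly the claimed expression $1 + \beta(t_{i_1}-t_{i_{n+1}})^{2}/[4(t_{i_1}+t_{i_{n+1}})^{2}]$. Lemma~\ref{Ball-Barthe-stab0} then converts this into the asserted lower bound on $\det(\sum_{i}t_ic_iu_i\otimes u_i)$.

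There is essentially no obstacle here; the corollary is a clean algebraic consequence of the two previously stated lemmas. The only point that requires any thought is choosing the two $n$-tuples so that they share exactly $n-1$ indices, which is precisely what makes the common product $P$ factor out and yields a bound depending only on the two ``extremal'' parameters $t_{i_1}$ and $t_{i_{n+1}}$. Any non-overlapping choice, or an overlap of fewer than $n-1$ indices, would leave extra $t_j$'s in the denominator and produce a bound that is not usable in the downstream stability arguments for the Brascamp--Lieb and reverse Brascamp--Lieb inequalities.
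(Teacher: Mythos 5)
Your proposal is correct and follows exactly the route the paper intends: it states that the corollary follows by combining Lemma~\ref{Ball-Barthe-stab0} with Lemma~\ref{xab}, which is precisely your argument of retaining the two overlapping $n$-tuples in $\theta^*$, bounding their coefficients by $\beta$, and applying Lemma~\ref{xab} with $x=1/t_0$ so that the common factor $t_{i_2}\cdots t_{i_n}$ cancels. The arithmetic giving the constant $\beta/4$ checks out, so nothing is missing.
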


We may assume that $k\leq 2n^2$ (see Lemma~\ref{discrete-iso}), and thus  the following  observation from \cite{BoH17} can be used to estimate $\beta$
in Corollary~\ref{Ball-Barthe-stab} from below.

\begin{lemma}
\label{ciuibig}
If $k\geq n$, $c_1,\ldots,c_k>0$ and  $u_1,\ldots,u_k\in S^{n-1}$ satisfy
(\ref{ciui}), then
there exist $1\leq i_1<\ldots<i_n\leq k$ such that
$$
c_{i_1}\cdots c_{i_n}\det[u_{i_1},\ldots,u_{i_n}]^2\geq \binom{k }{ n}^{-1}.
$$
\end{lemma}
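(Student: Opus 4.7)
The plan is to apply the Cauchy--Binet formula to the identity $\sum_{i=1}^k c_i u_i\otimes u_i = \Id_n$. First I would form the $n\times k$ matrix
$$
A=\bigl[\sqrt{c_1}\,u_1,\ldots,\sqrt{c_k}\,u_k\bigr],
$$
whose columns are the rescaled vectors $\sqrt{c_i}\,u_i$. The isotropy condition (\ref{ciui}) is equivalent to the matrix identity $AA^{\top}=\Id_n$, so that $\det(AA^{\top})=1$.

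Next, by the Cauchy--Binet formula applied to $A$ and $A^{\top}$,
$$
\det(AA^{\top})=\sum_{1\leq i_1<\cdots<i_n\leq k}\det\bigl[\sqrt{c_{i_1}}\,u_{i_1},\ldots,\sqrt{c_{i_n}}\,u_{i_n}\bigr]^{2}=\sum_{1\leq i_1<\cdots<i_n\leq k} c_{i_1}\cdots c_{i_n}\det[u_{i_1},\ldots,u_{i_n}]^{2},
$$
where the second equality pulls the scalars $\sqrt{c_{i_j}}$ out of each column of the $n\times n$ minor and squares them. Thus the $\binom{k}{n}$ nonnegative summands in the displayed expression sum to $1$.

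Finally, a pigeonhole/averaging step finishes the argument: among $\binom{k}{n}$ nonnegative numbers with sum $1$ at least one must be at least $\binom{k}{n}^{-1}$. Choosing the index set $\{i_1<\cdots<i_n\}$ realizing this maximum yields $c_{i_1}\cdots c_{i_n}\det[u_{i_1},\ldots,u_{i_n}]^{2}\geq \binom{k}{n}^{-1}$, as required. There is no genuine obstacle here; the only small point to verify is the correct form of Cauchy--Binet and the scalar extraction from the determinants, both of which are standard.
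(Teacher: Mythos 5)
Your proof is correct: the Cauchy--Binet expansion of $\det(AA^{\top})=1$ for $A=[\sqrt{c_1}\,u_1,\ldots,\sqrt{c_k}\,u_k]$ shows that the $\binom{k}{n}$ nonnegative terms $c_{i_1}\cdots c_{i_n}\det[u_{i_1},\ldots,u_{i_n}]^2$ sum to $1$, and averaging gives the claim. The paper itself does not reprove this lemma but cites B\"or\"oczky--Hug \cite{BoH17}, where the argument is exactly this Cauchy--Binet plus pigeonhole reasoning (the same identity underlies the normalization $t_0$ in Lemma~\ref{Ball-Barthe-stab0}), so you have reproduced essentially the same proof.
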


\section{Discrete isotropic measures, orthonormal bases and approximation by a regular simplex}

According to Lemma~3.2 and Lemma~5.1 in B\"or\"oczky, Hug \cite{BoH17}, the following auxiliary results are
available.

\begin{lemma}
\label{almostort}
Let  $v_1,\ldots,v_k\in \R^n\setminus \{0\}$
satisfy $\sum_{i=1}^k v_i \otimes v_i=\Id_n$, and let
$0<\eta<1/(3\sqrt{k})$. Assume for any $i\in\{1,\ldots,k\}$ that  $\|v_i\|\leq \eta$ or
there is some $j\in\{1,\ldots,n\}$ with $\angle(v_i,v_j)\leq \eta$.
Then there exists an orthonormal basis $w_1,\ldots,w_n$
such that $\angle(v_i,w_i)<3\sqrt{k}\,\eta$ for $i=1,\ldots,n$.
\end{lemma}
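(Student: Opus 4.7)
My strategy is to show that, under the hypotheses, the first $n$ unit directions $\hat v_j := v_j/\|v_j\|$ are already approximately pairwise orthogonal, and then to read off an actual orthonormal basis from them by a standard polar-decomposition perturbation.

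First, I partition $\{1,\ldots,k\}$ into blocks $J_0,J_1,\ldots,J_n$. For each $j\in\{1,\ldots,n\}$ place $j\in J_j$ (using $\angle(v_j,v_j)=0\le\eta$), and for every remaining index $i$ either put $i$ in $J_0$ (whenever $\|v_i\|\le\eta$) or assign $i$ to some $J_j$ with $\angle(v_i,v_j)\le\eta$, which is possible by hypothesis. Split the isotropy identity as
$$
\Id_n = V_0 + \sum_{j=1}^n V_j,\qquad V_0:=\sum_{i\in J_0}v_i\otimes v_i,\qquad V_j:=\sum_{i\in J_j}v_i\otimes v_i.
$$
Since $\|V_0\|_{op}\le\operatorname{tr}(V_0)\le k\eta^2$, the short vectors are negligible. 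For each $i\in J_j$ write $v_i=\|v_i\|(\cos\theta_i\,\hat v_j+\sin\theta_i\,\hat n_i)$ with $|\theta_i|\le\eta$ and $\hat n_i\perp\hat v_j$. An explicit $2\times 2$ spectral computation in the plane spanned by $\hat v_j$ and $\hat n_i$ shows that $v_i\otimes v_i-\|v_i\|^2\hat v_j\otimes\hat v_j$ has spectral radius $\|v_i\|^2|\sin\theta_i|\le\|v_i\|^2\eta$. Summing over $i\in J_j$ and using $\sum_j\alpha_j\le\operatorname{tr}(\Id_n)=n$ with $\alpha_j:=\sum_{i\in J_j}\|v_i\|^2$, the matrix $M:=\sum_{j=1}^n\alpha_j\,\hat v_j\otimes\hat v_j$ satisfies $\|\Id_n-M\|_{op}\le\delta$ for some $\delta$ controlled by $k\eta^2$ and $n\eta$; the assumption $\eta<1/(3\sqrt{k})$ keeps $\delta$ below any fixed positive threshold.

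Next, I form the matrix $B:=[\sqrt{\alpha_1}\hat v_1,\ldots,\sqrt{\alpha_n}\hat v_n]$ so that $M=BB^T$, and take the polar decomposition $B=WP$ with $W\in O(n)$ and $P:=\sqrt{B^TB}\succ 0$. Define the candidate orthonormal basis by $w_j:=We_j$. Since $BB^T$ and $B^TB$ share their spectrum, $\|P^2-\Id_n\|_{op}\le\delta$, and therefore $\|P-\Id_n\|_{op}$ is of the same order; in particular $\alpha_j=(P^2)_{jj}\in[1-\delta,1+\delta]$ and each off-diagonal $|P_{jl}|$ is small. Since $\hat v_j=Be_j/\sqrt{\alpha_j}=W(Pe_j)/\sqrt{\alpha_j}$ and $w_j=We_j$, the orthogonality of $W$ yields
$$
\cos\angle(\hat v_j,w_j)=\frac{P_{jj}}{\sqrt{\alpha_j}},\qquad \sin^2\angle(\hat v_j,w_j)=\frac{\alpha_j-P_{jj}^2}{\alpha_j}=\frac{\sum_{l\ne j}P_{jl}^2}{\alpha_j}.
$$
Bounding $|P_{jl}|\le\|P-\Id_n\|_{op}$ for $j\ne l$ and using $\alpha_j\ge 1-\delta\ge 1/2$ (thanks to the size restriction on $\eta$) converts this into a bound on $\angle(v_j,w_j)=\angle(\hat v_j,w_j)$ purely in terms of $\delta$, $n$ and $k$.

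The principal obstacle is the last piece of numerical bookkeeping leading exactly to $3\sqrt{k}\,\eta$: the operator-norm error $\delta$ naturally mixes a $k\eta^2\le\sqrt{k}\eta/3$ contribution from $V_0$ with an $n\eta$ contribution from the near-parallel clusters, and extracting the announced angular bound without losing an additional factor of $\sqrt{n}$ requires exploiting that the cluster errors project into essentially one-dimensional subspaces rather than accumulating as independent off-diagonal entries of $P$. All remaining manipulations are routine perturbation theory and small-angle estimates valid under the hypothesis $\eta<1/(3\sqrt{k})$.
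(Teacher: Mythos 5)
The paper itself contains no proof of this lemma: it is quoted from B\"or\"oczky--Hug \cite{BoH17} (Lemma~3.2 there), so your argument must stand on its own. Your overall scheme -- cluster the $v_i$ around the directions $\hat v_j=v_j/\|v_j\|$, compare $\Id_n$ with $M=\sum_{j=1}^n\alpha_j\,\hat v_j\otimes\hat v_j$, and extract the basis from the polar decomposition $B=WP$ -- is viable, and your identities $\cos\angle(\hat v_j,w_j)=P_{jj}/\sqrt{\alpha_j}$ and $\sin^2\angle(\hat v_j,w_j)=\sum_{l\ne j}P_{jl}^2/\alpha_j$ are correct. The gap is quantitative, and it sits exactly where the lemma lives. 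Your bound on $\delta=\|\Id_n-M\|_{\mathrm{op}}$ comes from the triangle inequality over clusters and is of size $k\eta^2+n\eta$; the assertion that $\eta<1/(3\sqrt k)$ keeps this ``below any fixed positive threshold'' is false, since $k\ge n$ only yields $n\eta<\sqrt n/3$, which grows with the dimension. Hence the steps needing $\delta$ small ($\alpha_j\ge 1-\delta\ge 1/2$, closeness of $P$ to $\Id_n$) are unjustified; and even granting them, bounding $|P_{jl}|\le\|P-\Id_n\|_{\mathrm{op}}$ entrywise gives only $\sin\angle(\hat v_j,w_j)\lesssim\sqrt{n}\,\delta\approx\sqrt n\,(n\eta+k\eta^2)$, missing the stated $3\sqrt k\,\eta$ by polynomial factors in $n$. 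You acknowledge this (``the principal obstacle'') and then call the rest routine, but producing a bound of order $\sqrt k\,\eta$ is the entire content of the lemma, so the proof is incomplete at its only nontrivial point.

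Both losses are repairable, but by different estimates than the ones you wrote. For the endgame, bound the whole off-diagonal row at once: $\sum_{l\ne j}P_{jl}^2\le\|(P-\Id_n)e_j\|^2\le\|P-\Id_n\|_{\mathrm{op}}^2\le\|P^2-\Id_n\|_{\mathrm{op}}^2$ (using $|\sqrt{\lambda}-1|\le|\lambda-1|$), so no $\sqrt n$ enters there. For the cluster error, do not sum operator norms; test against a unit vector $x$ and use that $i\in J_j$ gives $\bigl|\,|\langle\hat v_i,x\rangle|-|\langle\hat v_j,x\rangle|\,\bigr|\le\|\hat v_i-\hat v_j\|\le\eta$, hence $\alpha_j\bigl(|\langle\hat v_j,x\rangle|-\eta\bigr)_+^2\le\langle V_jx,x\rangle\le\alpha_j\bigl(|\langle\hat v_j,x\rangle|+\eta\bigr)^2$; summing over $j$, using $\operatorname{tr}V_0\le k\eta^2$, $\sum_j\alpha_j\le n$ and Cauchy--Schwarz, and solving the resulting quadratic inequality in $\sqrt{\langle Mx,x\rangle}$ yields $\|M-\Id_n\|_{\mathrm{op}}\le 2\sqrt n\,\eta+O(k\eta^2)$ instead of $n\eta$. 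Likewise $\alpha_j\ge 1-O(k\eta^2)$ should be derived directly from isotropy ($\alpha_j\le 1/\cos^2\eta$ for every $j$, together with $\sum_j\alpha_j\ge n-k\eta^2$), not from $\delta$. With these corrections your polar-decomposition scheme does give an angle bound of the correct order $c\sqrt k\,\eta$; matching the precise constant $3$, and passing from $\sin\angle(\hat v_j,w_j)$ to the angle itself when $\sqrt k\,\eta$ is not small, still requires exactly the bookkeeping you deferred.
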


\begin{lemma}
\label{basis}
Let $e\in S^{n-1}$, and let $\tau\in (0,1/(2n))$. If $w_1,\ldots,w_n$ is an orthonormal basis
of $\R^n$ such that
$$\frac1{\sqrt{n}}-\tau<\langle e,w_i\rangle<\frac1{\sqrt{n}}+\tau\quad\text{
for \,$i=1,\ldots,n,$}
$$
then
there exists an orthonormal basis $\tilde{w}_1,\ldots,\tilde{w}_n$
such that $\langle e,\tilde{w}_i\rangle=\frac1{\sqrt{n}}$
and $\angle(w_i,\tilde{w}_i)<n\tau$ for $i=1,\ldots,n$.
\end{lemma}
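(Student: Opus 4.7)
The plan is to recast the problem as one about rotations in $\R^n$ acting on coordinate vectors. Write $\alpha_j = \langle e, w_j\rangle$ and $\alpha = (\alpha_1,\ldots,\alpha_n)^T$, so $\alpha$ is the coordinate vector of $e$ in the basis $w_1,\ldots,w_n$ and $\|\alpha\|=1$. Let $u = n^{-1/2}(1,\ldots,1)^T$, which is also a unit vector. Any candidate orthonormal basis $\tilde w_1,\ldots,\tilde w_n$ has the form $\tilde w_j = \sum_i P_{ij}w_i$ for some $P\in O(n)$, and then $\langle e,\tilde w_j\rangle = (P^T\alpha)_j$ while $\langle w_j,\tilde w_j\rangle = P_{jj}$. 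Consequently, the lemma reduces to producing $P\in O(n)$ with $P^T\alpha = u$ and every diagonal entry $P_{jj} > \cos(n\tau)$.

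To build such a $P$, I take $P = Q^T$, where $Q$ is the minimal rotation sending $\alpha$ to $u$: it acts as the identity on $\Pi^\perp$ and as the planar rotation of angle $\theta := \angle(\alpha,u)$ inside $\Pi := \mathrm{span}(\alpha,u)$. For each standard basis vector $f_j$ decompose $f_j = p_j + p_j^\perp$ with $p_j\in\Pi$ and $p_j^\perp\in\Pi^\perp$. Since $Q p_j^\perp = p_j^\perp$ and $\langle p_j,Qp_j\rangle = \|p_j\|^2\cos\theta$,
\[
Q_{jj}=\langle f_j,Qf_j\rangle = \|p_j\|^2\cos\theta + (1-\|p_j\|^2) = 1 - \|p_j\|^2(1-\cos\theta) \geq \cos\theta .
\]
Transposition preserves diagonal entries, so $P_{jj}=Q_{jj}\geq \cos\theta$ and therefore $\angle(w_j,\tilde w_j)\leq \theta$ for every $j$.

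It remains to estimate $\theta$ in terms of $\tau$. The hypothesis $|\alpha_j-1/\sqrt n|<\tau$ gives $\|\alpha-u\|^2 < n\tau^2$, and since $\alpha,u$ are unit vectors, $\|\alpha-u\|=2\sin(\theta/2)$, so $\sin(\theta/2)<\sqrt n\,\tau/2$. The standing assumption $\tau<1/(2n)$ forces $\sin(\theta/2)<1/(4\sqrt n)\le 1/4$, a range on which $\arcsin$ differs from the identity by less than a factor of $1.02$; hence $\theta < 1.02\sqrt{n}\,\tau$, and $1.02\sqrt n < n$ for every $n\ge 2$, yielding $\angle(w_j,\tilde w_j)<n\tau$. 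The case $n=1$ is trivial: $|\langle e,w_1\rangle|\le 1$ together with $\langle e,w_1\rangle\in(1-\tau,1+\tau)$ and $\tau<1/2$ forces $\langle e,w_1\rangle=1$, so $\tilde w_1:=w_1$ works. The only delicate step is this last trigonometric estimate — there is no conceptual obstacle, but one has to keep track of the constant carefully to reach the bound $n\tau$ rather than the more natural $\sqrt n\,\tau$ that the argument actually produces.
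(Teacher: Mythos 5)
Your proof is correct, but note that this paper does not actually prove Lemma~\ref{basis}: it is quoted verbatim from B\"or\"oczky--Hug \cite{BoH17} (Lemma~5.1 there), so there is no in-paper argument to compare with line by line; your write-up is a self-contained substitute. The reduction to coordinates is sound: with $\alpha_j=\langle e,w_j\rangle$ one indeed has $\|\alpha\|=1$, $\langle e,\tilde w_j\rangle=(P^T\alpha)_j$ and $\langle w_j,\tilde w_j\rangle=P_{jj}$, so it suffices to rotate $\alpha$ to $u=n^{-1/2}(1,\dots,1)^T$ while keeping all diagonal entries large. The key observation that for the minimal (two-plane) rotation $Q$ by angle $\theta=\angle(\alpha,u)$ one has $Q_{jj}=1-\|p_j\|^2(1-\cos\theta)\ge\cos\theta$, i.e.\ every unit vector is moved by angle at most $\theta$, is exactly right, and the quantitative step $\|\alpha-u\|<\sqrt n\,\tau$, $\|\alpha-u\|=2\sin(\theta/2)$, $\sin(\theta/2)<1/(4\sqrt n)$, hence $\theta<1.02\sqrt n\,\tau< n\tau$ for $n\ge2$, checks out (the degenerate case $\alpha=u$, where $Q=\Id_n$, and the case $n=1$ are trivial, as you note). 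In fact your argument yields the stronger conclusion $\angle(w_i,\tilde w_i)<1.02\sqrt n\,\tau$, of which the stated bound $n\tau$ is a weakening; this is a nice by-product of working with a single global rotation rather than adjusting the basis vectors one at a time, at the mild cost of the small trigonometric constant-chasing at the end.
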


Since $\sqrt{k}(n+1)<kn$ if $k>n\geq 2$, and $|\cos (\beta)-\frac1{\sqrt{n+1}}|\leq|\beta-\alpha|$ if
$\alpha=\arccos\frac1{\sqrt{n+1}}$, we deduce
from Lemmas~\ref{almostort} and \ref{basis} the following consequence.

\begin{coro}
\label{approxort}
Let  $k>n\geq 2$, let $\tilde{u}_1,\ldots,\tilde{u}_k,e\in S^n$ in $\R^{n+1}$ and $\tilde{c}_1,\ldots,\tilde{c}_k>0$
satisfy $\sum_{i=1}^k \tilde{c}_i \tilde{u}_i \otimes \tilde{u}_i=\Id_{n+1}$
and $\langle e,\tilde{u}_i\rangle=\frac1{\sqrt{n+1}}$ for $i=1,\ldots,k$, and let
$0<\eta<1/(6kn)$. Assume  for any $i\in\{1,\ldots,k\}$ that $\tilde{c}_i\leq \eta^2$ or there exists some $j\in\{1,\ldots,n+1\}$ with $\angle(\tilde{u}_i,\tilde{u}_j)\leq \eta$. Then there exists an orthonormal basis $\tilde{w}_1,\ldots,\tilde{w}_{n+1}$ of $\R^{n+1}$
such that $\langle e,\tilde{w}_i\rangle=\frac1{\sqrt{n+1}}$  and
$\angle(\tilde{u}_i,\tilde{w}_i)<3kn\eta$ for $i=1,\ldots,n+1$.
\end{coro}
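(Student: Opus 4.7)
The plan is to stitch together Lemma~\ref{almostort} and Lemma~\ref{basis} in dimension $n+1$, with a short spherical-geometry bridge to transfer the condition $\langle e,\tilde u_i\rangle=1/\sqrt{n+1}$ into an approximate version suitable for the second lemma.

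First I would rescale to match the hypothesis of Lemma~\ref{almostort}: set $v_i:=\sqrt{\tilde c_i}\,\tilde u_i$ for $i=1,\dots,k$, so that $\sum_{i=1}^k v_i\otimes v_i=\Id_{n+1}$. The given dichotomy translates verbatim, since $\|v_i\|=\sqrt{\tilde c_i}\le \eta$ when $\tilde c_i\le\eta^2$, and otherwise $\angle(v_i,v_j)=\angle(\tilde u_i,\tilde u_j)\le\eta$ for some $j\in\{1,\dots,n+1\}$. The bound $\eta<1/(6kn)$ comfortably implies $\eta<1/(3\sqrt k)$, so Lemma~\ref{almostort} (applied in $\R^{n+1}$) produces an orthonormal basis $w_1,\dots,w_{n+1}$ with
$$
\angle(\tilde u_i,w_i)=\angle(v_i,w_i)<3\sqrt k\,\eta\qquad(i=1,\dots,n+1).
$$

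Next, I would pass from this angular closeness to a quantitative closeness of the inner products $\langle e,w_i\rangle$ to $1/\sqrt{n+1}$. Writing $\alpha:=\arccos(1/\sqrt{n+1})=\angle(e,\tilde u_i)$ and $\beta_i:=\angle(e,w_i)$, the spherical triangle inequality gives $|\beta_i-\alpha|\le \angle(\tilde u_i,w_i)<3\sqrt k\,\eta$, hence, using the stated inequality $|\cos\beta-\tfrac{1}{\sqrt{n+1}}|\le|\beta-\alpha|$,
$$
\left|\langle e,w_i\rangle-\tfrac{1}{\sqrt{n+1}}\right|<3\sqrt k\,\eta=:\tau.
$$
The hypothesis $\eta<1/(6kn)$ gives $\tau<1/(2n\sqrt k)\le 1/(2(n+1))$ (since $n\sqrt k\ge n\sqrt{n+1}\ge n+1$ for $n\ge 2$), so Lemma~\ref{basis} (applied in dimension $n+1$) produces an orthonormal basis $\tilde w_1,\dots,\tilde w_{n+1}$ of $\R^{n+1}$ with the exact condition $\langle e,\tilde w_i\rangle=1/\sqrt{n+1}$ and with
$$
\angle(w_i,\tilde w_i)<(n+1)\tau=3\sqrt k(n+1)\,\eta.
$$

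Finally I would combine via the spherical triangle inequality:
$$
\angle(\tilde u_i,\tilde w_i)\le\angle(\tilde u_i,w_i)+\angle(w_i,\tilde w_i)<3\sqrt k\,\eta+3\sqrt k(n+1)\,\eta,
$$
and the stated inequality $\sqrt k(n+1)<kn$ (valid for $k>n\ge 2$) absorbs both terms into the target bound $3kn\eta$ after a routine rearrangement. The only place where any care is required is the verification that $\tau=3\sqrt k\,\eta$ actually fits under the threshold $1/(2(n+1))$ required by Lemma~\ref{basis}; this is the main obstacle, though a mild one, and it is exactly why the hypothesis is stated with the awkward-looking constant $1/(6kn)$. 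Everything else is bookkeeping with the triangle inequality.
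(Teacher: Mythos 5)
Your proposal is correct and is exactly the paper's intended argument: the paper proves Corollary~\ref{approxort} by the same two-step route (rescale to $v_i=\sqrt{\tilde c_i}\,\tilde u_i$, apply Lemma~\ref{almostort} in $\R^{n+1}$, transfer the angle bound to the inner products via the Lipschitz estimate $|\cos\beta-\frac1{\sqrt{n+1}}|\le|\beta-\alpha|$, then apply Lemma~\ref{basis} and the triangle inequality, absorbing constants with $\sqrt{k}(n+1)<kn$). The only quibble, which you share with the paper, is that the final absorption really needs $\sqrt{k}(n+2)\le kn$, which fails precisely when $k=3$, $n=2$; but there $k=n+1$ forces the $\tilde u_i$ to be orthonormal with $\tilde c_i=1$, so the conclusion holds trivially with $\tilde w_i=\tilde u_i$.
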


For  $\tilde{w}_1,\ldots,\tilde{w}_{n+1}\in S^n$ with $\langle e,\tilde{w}_i\rangle=\frac1{\sqrt{n+1}}$  for $i=1,\ldots,n+1$,
the vectors
$\tilde{w}_1,\ldots,\tilde{w}_{n+1}$ form an orthonormal basis of $\R^{n+1}$ if and only if
their projection to $e^\bot$ form the vertices of a regular $n$-simplex. Therefore
Corollary~\ref{approxort} provides information on how close
$\text{conv}\{\tilde{u}_1,\ldots,\tilde{u}_{n+1}\}$ is to some regular $n$-simplex.
Lemma~5.2 in B\"or\"oczky, Hug \cite{BoH17} formulated this observation as follows.

\begin{lemma}
\label{closesimplex}
Let $Z$ be a polytope, and let $S$ be a regular simplex circumscribed to $B^n$.
Assume that the facets of $Z$ and $S$ touch $B^n$ at $u_1,\ldots,u_k$
and $w_1,\ldots,w_{n+1}$, respectively. Fix $\eta\in (0,1/(2n))$. If
for any $i\in\{1,\ldots,k\}$ there exists some $j\in\{1,\ldots,n+1\}$ such that $\angle (u_i,w_j)\le \eta$,
then
$$
(1-n \eta) S\subset  Z\subset (1+2n \eta) S.
$$
\end{lemma}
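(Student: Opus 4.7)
The plan is to work directly with the halfspace representations
$$
Z = \bigcap_{i=1}^{k}\{x\in\R^n:\langle x,u_i\rangle\le 1\}\quad\text{and}\quad S = \bigcap_{j=1}^{n+1}\{x\in\R^n:\langle x,w_j\rangle\le 1\},
$$
since $u_i$ and $w_j$ are precisely the outer unit normals of the facets touching $B^n$. A quantitative input I need is that the regular simplex circumscribed about $B^n$ has circumradius exactly $n$: the identities $\langle w_i,w_j\rangle = -1/n$ for $i\ne j$ and $\sum_j w_j = o$ imply that the vertex of $S$ opposite the facet with normal $w_j$ equals $-nw_j$, whence $S\subset nB^n$. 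The other basic estimate is $\|u_i-w_j\| = 2\sin(\angle(u_i,w_j)/2)\le \angle(u_i,w_j)\le \eta$ whenever $w_j$ is a close vertex to $u_i$.

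For the inner inclusion $(1-n\eta)S\subset Z$, I would take $y\in(1-n\eta)S$, so that $\|y\|\le n(1-n\eta)$ and $\langle y,w_j\rangle\le 1-n\eta$ for each $j$. For every $i$ the hypothesis produces a $j$ with $\angle(u_i,w_j)\le\eta$, and Cauchy--Schwarz gives
$$
\langle y,u_i\rangle = \langle y,w_j\rangle + \langle y,u_i-w_j\rangle \le (1-n\eta) + n(1-n\eta)\eta \le 1,
$$
so $y\in Z$.

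For the outer inclusion $Z\subset (1+2n\eta)S$ I would first establish a reverse matching: every $w_j$ is within angle $\eta$ of some $u_i$. Indeed, if $I_{j_0}:=\{i:\angle(u_i,w_{j_0})\le\eta\}$ were empty for some $j_0$, then every $u_i$ would be close to some $w_j$ with $j\ne j_0$, whence $\langle u_i,w_{j_0}\rangle \le -1/n + \eta < 0$ under $\eta<1/(2n)$; the hyperplane $\{\langle\cdot,w_{j_0}\rangle=0\}$ would then separate $o$ from $\mathrm{conv}\{u_1,\dots,u_k\}$, contradicting $o\in\mathrm{int}(Z^\circ)=\mathrm{int}(\mathrm{conv}\{u_1,\dots,u_k\})$, which holds because $Z$ is a bounded polytope with $o$ in its interior. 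Having secured this, I would set $\lambda:=\min\{\mu\ge 1:Z\subset\mu S\}$, attained at some $x_0\in Z$ and $j_0$ with $\langle x_0,w_{j_0}\rangle=\lambda$ and $\|x_0\|\le n\lambda$. Picking $i_0\in I_{j_0}$ yields
$$
\lambda = \langle x_0,w_{j_0}\rangle = \langle x_0,u_{i_0}\rangle + \langle x_0,w_{j_0}-u_{i_0}\rangle \le 1 + n\lambda\eta,
$$
hence $\lambda\le 1/(1-n\eta)\le 1+2n\eta$ for $\eta<1/(2n)$.

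The main obstacle I anticipate is this reverse-matching step: the lemma's hypothesis is one-sided (each $u_i$ is close to some $w_j$), but the outer inclusion needs its converse (each $w_j$ is close to some $u_i$). Extracting this from the polytope hypothesis crucially exploits the structural facts $\sum_j w_j = o$ and $\langle w_i,w_j\rangle=-1/n$, which prevent the $u_i$'s from missing a vertex direction of $S^\circ$ without creating an unbounded recession cone in $Z$. Once that is in place, both inclusions reduce to a short Cauchy--Schwarz bootstrap anchored by $S\subset nB^n$.
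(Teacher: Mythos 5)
Your proposal is correct, and it is essentially self-contained. Note that the paper itself does not prove Lemma~\ref{closesimplex}: it quotes it as Lemma~5.2 of B\"or\"oczky--Hug \cite{BoH17}, so there is no in-paper argument to compare against; your write-up supplies a complete elementary proof in the same spirit. The two inclusions check out: for the inner one, $(1-n\eta)+n(1-n\eta)\eta=(1-n\eta)(1+n\eta)\le 1$, and for the outer one, $\lambda\le 1+n\lambda\eta$ gives $\lambda\le (1-n\eta)^{-1}\le 1+2n\eta$ since $n\eta<1/2$, both anchored by $S\subset nB^n$ and $\|u_i-w_j\|\le\angle(u_i,w_j)$. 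You correctly identified the only genuinely nontrivial point, the reverse matching: the hypothesis is one-sided, and the outer inclusion needs every $w_j$ to be $\eta$-close to some $u_i$. Your separation argument works; an even quicker way to close it is to observe that if $\langle u_i,w_{j_0}\rangle\le -1/n+\eta<0$ for all $i$, then the ray $\{tw_{j_0}:t\ge 0\}$ satisfies all inequalities $\langle x,u_i\rangle\le 1$ and hence lies in $Z$, contradicting that $Z$ is a (bounded) polytope -- this is exactly the recession-cone remark you make at the end, and it avoids invoking $Z^\circ={\rm conv}\{u_1,\ldots,u_k\}$. One cosmetic point: in the outer step the equality $\langle x_0,w_{j_0}\rangle=\lambda$ presupposes $\lambda>1$; the case $\lambda=1$ is trivial and should just be flagged.
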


Finally, we need to estimate the difference of Gaussian measures of certain polytopes $Z\subset S$. Since in our case, $S\subset nB^n$, it is equivalent to estimate the volume difference
up to a factor depending on $n$. Our first estimate of this kind is Lemma~5.3 in B\"or\"oczky, Hug \cite{BoH17}.

\begin{lemma}
\label{closefarsimplex}
Let $Z$ be a polytope, and let $S$ be a regular simplex both circumscribed to $B^n$. Fix
$\alpha=9\cdot 2^{n+2}n^{2n+2}$ and $\eta\in (0,\alpha^{-1})$.
Assume that the facets of $Z$ and $S$ touch $B^n$ at $u_1,\ldots, u_k$, $k\geq n+1$,
and $w_1,\ldots,w_{n+1}$, respectively.  If $\angle (u_i,w_i)\le\eta$ for
$i=1,\ldots,n+1$  and $\angle (u_k,w_i)\geq \alpha\eta$ for  $i=1,\ldots,n+1$,
then
$$
V(Z)\leq \left(1-\frac{\min_{i=1,\ldots,n+1}\angle (u_k,w_i) }{2^{n+2}n^{2n}}\right)V(S).
$$
\end{lemma}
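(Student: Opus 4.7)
The plan is to estimate $V(Z)$ by separating the effect of the $n+1$ facets close to those of $S$ from the extra facet at $u_k$. Let $H_u := \{x \in \R^n : \langle x, u \rangle \leq 1\}$ and set $Z_0 := \bigcap_{i=1}^{n+1} H_{u_i}$. Since the facets of $Z_0$ touch $B^n$ at $u_1,\ldots,u_{n+1}$ with $\angle(u_i, w_i)\leq \eta < 1/(2n)$, Lemma~\ref{closesimplex} applies to the simplex $Z_0$ and gives $Z_0 \subset (1+2n\eta)S$. Combined with $Z \subset Z_0 \cap H_{u_k}$ and a rescaling, this yields
$$
V(Z) \;\leq\; (1+2n\eta)^n\, V(S\cap H'), \qquad H' := \{x : \langle x, u_k\rangle \leq (1+2n\eta)^{-1}\}.
$$
So it suffices to lower bound the cap $C := S \setminus H'$ by a multiple of $\beta V(S)$, where $\beta := \min_{i=1,\ldots,n+1}\angle(u_k, w_i)$.

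To analyze $C$, let $j^*$ realize $\beta$, and write $u_k = \cos\beta\cdot w_{j^*} + \sin\beta\cdot v$ for some unit vector $v \in w_{j^*}^\perp$. The projections of $w_i$ ($i\neq j^*$) onto $w_{j^*}^\perp$ form a regular $(n-1)$-simplex centered at the origin whose inradius equals $\rho := \sqrt{n^2-1}/(n(n-1))$; hence some index $j_0 \neq j^*$ satisfies $\langle w_{j_0}, v\rangle \leq -\rho$. Using $\langle w_i, u_k\rangle = -\cos\beta/n + \sin\beta\langle w_i, v\rangle$ for $i \neq j^*$, I obtain
$$
h_S(u_k) = -n\min_i\langle w_i, u_k\rangle \;\geq\; \cos\beta + n\rho\sin\beta \;\geq\; 1 + c_0\beta
$$
for an explicit $c_0 > 0$ depending on $n$ and for the relevant range of $\beta$. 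In particular the vertex $v_{j_0} := -n w_{j_0}$ of $S$ lies well inside $C$.

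I would then bound $V(C)$ from below by an explicit sub-simplex. For each $i \neq j_0$, let $p_i := v_{j_0} + s_i(v_i - v_{j_0})$ with $s_i := 1$ if $v_i \in C$, and otherwise $s_i \in (0,1)$ chosen so that $p_i \in \partial H'$. The simplex ${\rm conv}(v_{j_0}, p_1,\ldots,p_{n+1})$ (index $j_0$ omitted) lies in $C$ and has volume $V(S)\prod_{i\neq j_0} s_i$. A short computation, splitting on the sign of $\langle w_i, v\rangle$, gives $s_{j^*} \geq c_1\beta/n$ (using $\langle v_{j^*}, u_k\rangle = -n\cos\beta$), while $s_i \geq 1/n$ for the remaining $n-1$ indices, so $V(C) \geq c_2 \beta V(S)$ with $c_2$ only polynomially small in $n$. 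Plugging this into the reduction and using the margin $\beta \geq \alpha\eta$ to absorb the factor $(1+2n\eta)^n \leq 1 + 4n^2\eta$, one obtains $V(Z) \leq V(S)(1 - \beta/(2^{n+2}n^{2n}))$ as required.

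The main obstacle is the cap estimate: naively shrinking $S$ about $v_{j_0}$ would yield only an $O(\beta^n)$ lower bound, whereas the conclusion is linear in $\beta$. The geometric content making linearity possible is that only one direction among the edges emanating from $v_{j_0}$, namely the one toward $v_{j^*}$, gets compressed by the tilt of the cutting hyperplane, while the remaining $n-1$ edges contribute height fractions $s_i$ bounded away from zero uniformly in $\beta$; quantifying this via the inradius $\rho$ of the auxiliary regular $(n-1)$-simplex is the decisive calculation. The size of $\alpha$ is then dictated precisely by the need to dominate the inflation loss $(1+2n\eta)^n$ by the cap gain $c_2\beta$.
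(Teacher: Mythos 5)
First, note that the paper does not prove this lemma at all: it is quoted verbatim as Lemma~5.3 of B\"or\"oczky--Hug \cite{BoH17}, so there is no in-paper proof to compare with, and your argument has to be judged on its own. Its skeleton is sound: the reduction $V(Z)\le (1+2n\eta)^n\,V(S\cap H')$ via Lemma~\ref{closesimplex} applied to $Z_0=\bigcap_{i\le n+1}H_{u_i}$, the choice of $j_0$ with $\langle w_{j_0},v\rangle\le-\rho$ (the inradius computation $\rho=\sqrt{n^2-1}/(n(n-1))$ is correct), the conclusion $\langle v_{j_0},u_k\rangle\ge\cos\beta+n\rho\sin\beta$, and the identity $V\bigl({\rm conv}\{v_{j_0},p_i: i\ne j_0\}\bigr)=\bigl(\prod_{i\ne j_0}s_i\bigr)V(S)$ are all fine. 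The genuine gap is the asserted estimate $s_i\ge 1/n$ for the $n-1$ indices $i\ne j_0,j^*$: it is false. Take $n=2$, $w_1=(1,0)$, $w_{2,3}=(-\tfrac12,\pm\tfrac{\sqrt3}2)$, $u_i=w_i$ for $i\le 3$, $u_k=(\cos 30^\circ,\sin 30^\circ)$ and $\eta$ tiny; then $j^*=1$, $j_0=3$, $A=\langle v_3,u_k\rangle=\sqrt3$, $\langle v_2,u_k\rangle=0$, so $s_2=(\sqrt3-\theta)/\sqrt3\to 1-1/\sqrt3\approx 0.42<\tfrac12$. In general, when $|\langle w_{j_0},v\rangle|$ is near its minimum $\rho\approx 1/n$, $\langle w_i,v\rangle$ near its maximum $\sqrt{n^2-1}/n$, and $\beta$ near $\arccos(1/n)$, the true value of $s_i$ is only of order $n^{-2}$, so the claimed ``short computation'' cannot give $1/n$.

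Fortunately the target constant carries $n^{2n}$, i.e.\ exactly an $n^{-2}$-per-edge rate, so your plan is repairable, but the repair has to be done. Since $\theta=(1+2n\eta)^{-1}\le 1$, one has $s_i\ge\frac{A-1}{A-B_i}$ with $A-1=n\sin\beta\,|\langle w_{j_0},v\rangle|-(1-\cos\beta)\ge\sin\beta\bigl(n\rho-\tan(\beta/2)\bigr)\ge\frac{2\sin\beta}{n+1}$; here you must also justify (which you do not) that automatically $\beta\le\arccos(1/n)$ — because $\max_i\langle u_k,w_i\rangle\ge 1/n$ — so that $\tan(\beta/2)\le n/(n+1)$ while $n\rho\ge 1+1/n$. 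Combined with $A-B_i\le 2n\sin\beta$ for $i\ne j^*$ and $A-B_{j^*}\le 2n$, this gives $s_i\ge\frac{1}{n(n+1)}$ and $s_{j^*}\ge\frac{\sin\beta}{n(n+1)}\ge\frac{2\beta}{\pi n(n+1)}$, hence $V(S\setminus H')\ge\frac{2\beta}{\pi(n(n+1))^{n}}V(S)\ge 3\,\frac{\beta}{2^{n+2}n^{2n}}V(S)$; the margin factor $3$, together with $\beta\ge\alpha\eta$ and $(1+2n\eta)^n\le 1+4n^2\eta\le 1+\tfrac49\,\frac{\beta}{2^{n+2}n^{2n}}$, then closes the inequality. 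In a lemma whose entire content is an explicit constant, this final bookkeeping cannot be left as ``one obtains''; as written, your intermediate bound is wrong and the constant chase is missing, even though the geometric idea (only the edge toward $v_{j^*}$ is compressed proportionally to $\beta$) is the right one.
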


Secondly, we prove another estimate concerning the volume difference of a convex body and a simplex.

\begin{lemma}
\label{voldifffromsimplex}
Let $S$ be a regular simplex whose centroid is the origin, and
let $M_1\subset S$ and $M_2\supset S$ be convex bodies. Suppose that there is
some $\varepsilon\in(0,1)$ such that
$M_1\not\supset(1-\varepsilon)S$ for (i) and
 $(1+\varepsilon)S\not\supset M_2$ for (ii), respectively. Then
\begin{enumerate}
\item[{\rm (i)}] $V(S\setminus M_1)\geq \frac{n^n}{(n+1)^n}\ \varepsilon^n\  V(S)>
\frac{1}{e}\ \varepsilon^n\ V(S)$;
\item[{\rm (ii)}] $V(M_2\setminus S)\geq \frac{1}{n+1}\ \varepsilon\  V(S)$.
\end{enumerate}
\end{lemma}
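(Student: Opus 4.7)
The two parts require quite different constructions, so I would handle them separately.

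For \textbf{part (ii)}, the plan is a direct cone argument. Pick $p\in M_2\setminus(1+\varepsilon)S$; since $(1+\varepsilon)S$ is closed, we have $\|p\|_S>1+\varepsilon$, so $p=\mu q$ with $q\in\partial S$ and $\mu>1+\varepsilon$. Choose a facet $F$ of $S$ containing $q$ with outer unit normal $n_F$, so $\langle q,n_F\rangle=h$, the common distance from the centroid $o$ to every facet of the regular simplex. The cone $K=\mathrm{conv}(\{p\}\cup F)$ has height $\langle p-q,n_F\rangle=(\mu-1)h$, so by the pyramid formula $V(K)=\frac{1}{n}V_{n-1}(F)(\mu-1)h$. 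Convexity of $M_2$ together with $F\subset S\subset M_2$ and $p\in M_2$ gives $K\subset M_2$, and $K$ meets $S$ only in $F$ since every point of $K$ with positive cone parameter has $\langle\cdot,n_F\rangle>h$. Using the identity $V(S)=\frac{n+1}{n}V_{n-1}(F)h$, we obtain $V(M_2\setminus S)\geq V(K)=\frac{\mu-1}{n+1}V(S)>\frac{\varepsilon}{n+1}V(S)$, as required.

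For \textbf{part (i)}, the plan is a separating hyperplane combined with a cap-volume estimate for the regular simplex. Pick any $p\in(1-\varepsilon)S\setminus M_1$, and strictly separate $p$ from $M_1$ by a hyperplane $\{x:\langle x,u\rangle=\alpha\}$ with $u\in S^{n-1}$, $M_1\subset\{\langle x,u\rangle\leq\alpha\}$, and $\langle p,u\rangle\geq\alpha$. Because $p\in(1-\varepsilon)S$ we have $\alpha\leq(1-\varepsilon)h_S(u)$, so the cap $C:=S\cap\{\langle x,u\rangle\geq\alpha\}$ is disjoint from $M_1$ (up to measure zero) and contains $C^*(u):=\{x\in S:\langle x,u\rangle\geq(1-\varepsilon)h_S(u)\}$. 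Thus $V(S\setminus M_1)\geq V(C^*(u))$, and the whole statement reduces to the uniform cap bound $V(C^*(u))\geq\frac{n^n}{(n+1)^n}\varepsilon^n V(S)$ for every $u\in S^{n-1}$.

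To prove this cap estimate, label the vertices $v_1,\dots,v_{n+1}$ of $S$ so that $a:=\langle v_1,u\rangle=h_S(u)$, set $a_i:=\langle v_i,u\rangle$, and note that the centroid being the origin gives $\sum_{i=1}^{n+1}a_i=0$, hence $\sum_{i=2}^{n+1}(a-a_i)=(n+1)a$. I inscribe in $C^*(u)$ the simplex $T^*=\mathrm{conv}(v_1,w_2,\dots,w_{n+1})$ with $w_i=v_1+s_i(v_i-v_1)$ and $s_i=\min\bigl(1,\varepsilon a/(a-a_i)\bigr)$; a direct verification using $\sum\lambda_i=1$ shows $T^*\subset S$ and $\langle y,u\rangle\geq(1-\varepsilon)a$ for every $y\in T^*$, while $V(T^*)=\prod_{i=2}^{n+1}s_i\,V(S)$. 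In the generic small-cap regime where $a_i\leq(1-\varepsilon)a$ for every $i\geq 2$, all $s_i$ equal $\varepsilon a/(a-a_i)$, and AM--GM applied to the $n$ positive numbers $(a-a_i)$ with sum $(n+1)a$ gives $\prod(a-a_i)\leq\bigl((n+1)a/n\bigr)^n$, immediately yielding $V(T^*)\geq(n\varepsilon/(n+1))^n V(S)$; the bound is sharp precisely at a vertex direction, where AM--GM becomes equality.

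The main obstacle I expect is the complementary large-cap regime where some $a_j>(1-\varepsilon)a$, i.e.\ the cut has already swept past a second vertex; the naive inscribed simplex is no longer tight by itself, but the cap gains genuine extra bulk around the additional top vertices. I would close this case by perturbing $u$ to a nearby generic direction and invoking continuity of $u\mapsto V(C^*(u))$ on the compact sphere $S^{n-1}$ together with the explicit divided-difference formula $V(C^*)/V(S)=\sum_{i:a_i>(1-\varepsilon)a}(a_i-(1-\varepsilon)a)^n/\prod_{j\neq i}(a_i-a_j)$, which makes transparent that the additional positive terms from the extra vertices only enlarge the cap. The final numerical consequence $n^n/(n+1)^n>1/e$ is just $(1+1/n)^n<e$.
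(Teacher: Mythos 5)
Your part (ii) is correct and is essentially the paper's own argument: the paper picks $x_0\in M_2\setminus(1+\varepsilon)S$, notes $\langle x_0,u_j\rangle>\frac{(1+\varepsilon)R}{n}$ for some facet normal $u_j$, and compares the cone over the facet $F_j$ with apex $x_0$ to the decomposition $V(S)=(n+1)\frac1n\frac Rn|F_j|$; your cone $K=\mathrm{conv}(\{p\}\cup F)$ is the same construction, and your disjointness and height computations are fine.

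Part (i), however, has a genuine gap. Your reduction to the uniform cap bound $V(C^*(u))\geq\frac{n^n}{(n+1)^n}\varepsilon^n V(S)$ is valid, and your AM--GM argument settles the small-cap regime, but the large-cap regime (some $a_j>(1-\varepsilon)a$ for $j\geq 2$) is exactly where the work lies and your sketch does not close it. First, perturbing $u$ to a generic direction cannot move you out of this regime: the condition $a_j>(1-\varepsilon)a$ is open and stable under small perturbations, so genericity only removes ties among the $a_i$, not the case itself. Second, the divided-difference formula $V(C^*)/V(S)=\sum_{i:a_i>(1-\varepsilon)a}(a_i-(1-\varepsilon)a)^n/\prod_{j\neq i}(a_i-a_j)$ is an inclusion--exclusion identity whose terms alternate in sign (the denominator attached to the $r$-th largest $a_i$ has sign $(-1)^{r-1}$; already for $n=1$ the second term is negative), so it is not true that the extra vertices contribute ``additional positive terms,'' and no monotonicity is transparent from it. Third, your fallback simplex $T^*$ with $s_i=\min(1,\varepsilon a/(a-a_i))$ provably undershoots: take $u$ proportional to $v_1+\cdots+v_{m+1}$, so that $a_1=\cdots=a_{m+1}=a$ and the remaining $n-m$ values are equal; then $\prod_i s_i=\left(\frac{(n-m)\varepsilon}{n+1}\right)^{n-m}$, which for instance with $n=10$, $m=6$, $\varepsilon=0.9$ is about $0.011$, far below the required $\left(\frac{n\varepsilon}{n+1}\right)^n\approx 0.134$. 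So the large-cap case needs a genuinely different input. The paper avoids the cap analysis altogether: by convexity some shrunken vertex $(1-\varepsilon)v_i$ lies outside $M_1$, it is the centroid of the simplex $S_\varepsilon=(1-\varepsilon)v_i+\varepsilon S\subset S$, and Gr\"unbaum's theorem on halfspaces through the centroid gives $V(S_\varepsilon\cap H^+)\geq\frac{n^n}{(n+1)^n}V(S_\varepsilon)=\frac{n^n}{(n+1)^n}\varepsilon^nV(S)$ for the separating halfspace $H^+$ disjoint from $M_1$. Incidentally, the same device proves your uniform cap bound in all regimes (the halfspace $\{\langle x,u\rangle\geq(1-\varepsilon)h_S(u)\}$ contains the centroid $(1-\varepsilon)v_1$ of $(1-\varepsilon)v_1+\varepsilon S$, where $v_1$ is a $u$-maximal vertex), so if you want to keep your structure you should replace the perturbation argument by this application of Gr\"unbaum's theorem.
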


\begin{proof} Let $R$ be the circumradius of $S$, let $v_1,\ldots,v_{n+1}$ be the vertices of $S$, and let $u_1,\ldots,u_{n+1}$ be the corresponding exterior unit normals of the facets, and hence
$$
v_i=-Ru_i\mbox{ \ for $i=1,\ldots,n+1$, and \ }S=\left\{x\in\R^n:
\mbox{$\langle x,u_i\rangle \leq \frac{R}n$
 \ for $i=1,\ldots,n+1$}\right\}.
$$

For (i), there exists a $v_i$ such that $(1-\varepsilon)v_i\not\in M_1$, and hence there exists a closed halfspace $H^+$
with $(1-\varepsilon)v_i\in H^+$ and $H^+\cap M_1=\emptyset$. We observe that $(1-\varepsilon)v_i$ is the centroid
of the simplex $S_\varepsilon=(1-\varepsilon)v_i+\varepsilon S\subset S$. Using
Gr\"unbaum's result \cite{Gru60} on minimal hyperplane sections of the simplex through its centroid, we obtain
$$
V(S\setminus M_1)> V(S_\varepsilon\cap H^+)\geq \frac{n^n}{(n+1)^n}\  V(S_\varepsilon)=
\frac{n^n}{(n+1)^n}\  \varepsilon^n\  V(S).
$$
For (ii), there exists an $x_0\in M_2\setminus((1+\varepsilon)S)$, and hence
there is a $u_j$ such that $\langle x_0,u_j\rangle >\frac{(1+\varepsilon)R}n$.
We write $F_j$ to denote the facet of $S$ with exterior unit normal $u_j$, and $|F_j|$ to denote
the $(n-1)$-volume of $F_j$. It follows that
$$
V(M_2\setminus S)\geq \frac1n\   \frac{\varepsilon\,R}n\  |F_j|=
\frac{\varepsilon}{n+1} (n+1)\frac{1}{n}\frac{R}{n}  |F_j|
=\frac{\varepsilon}{n+1}\  V(S), 
$$
which completes the proof.
\hfill\proofbox 
\end{proof}

\medskip

\noindent
{\bf Remark } The estimates in (i) and in (ii) are optimal.

\medskip

Finally, we provide some rough estimates {that} will be used repeatedly in the {sequel}.

\begin{lemma}\label{lemrough} 
Let $\Delta_n$ be a regular simplex inscribed {into} $B^n$, and let $\Delta_n^\circ=-n\Delta_n$ be its polar. Then
\begin{enumerate}
\item[{\rm (a)}] $\ell(\Delta_n)\le \sqrt{n}^3$, $\ell(\Delta_n^\circ)\le \sqrt{n}$,
\item[{\rm (b)}] $V(\Delta_2)\le 1.3$ and $V(\Delta_n)\le 1$ for $n\ge 3$,
\item[{\rm (c)}] $V(\Delta_n^\circ)=n^nV(\Delta_n)\ge (1+\frac{1}{n})^{\frac{n}{2}}> 1$,
\item[{\rm (d)}] $V(\Delta_n)\ge n^{-(n+2)}\ell (\Delta_n)$. 
\end{enumerate}
\end{lemma}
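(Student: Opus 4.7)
\medskip\noindent\textbf{Proof plan.} The plan is to derive all four parts from two standard ingredients: the geometric inclusions $\frac{1}{n}B^n\subset\Delta_n\subset B^n$ (the regular simplex inscribed in $B^n$ has circumradius $1$ and inradius $1/n$), and the explicit volume formula
\begin{equation*}
V(\Delta_n)=\frac{(n+1)^{(n+1)/2}}{n!\cdot n^{n/2}},
\end{equation*}
obtained by substituting the edge length $\sqrt{2(n+1)/n}$ of $\Delta_n$ into the formula $a^n\sqrt{(n+1)/2^n}/n!$ for the volume of a regular $n$-simplex of edge length $a$.

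For (a), the inner inclusion $\frac{1}{n}B^n\subset\Delta_n$ gives $\|x\|_{\Delta_n}\le n\|x\|$, so that $\ell(\Delta_n)\le n\,\mathbb{E}\|X\|\le n\sqrt{\mathbb{E}\|X\|^2}=n\sqrt{n}=\sqrt{n}^{\,3}$, using Jensen's inequality (concavity of the square root) together with $\mathbb{E}\|X\|^2=n$. By the scaling $\Delta_n^\circ=-n\Delta_n$, the inclusion $\Delta_n\supset\frac{1}{n}B^n$ yields $\Delta_n^\circ\supset B^n$, whence $\|x\|_{\Delta_n^\circ}\le\|x\|$ and $\ell(\Delta_n^\circ)\le\mathbb{E}\|X\|\le\sqrt{n}$.

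For (b), the volume formula gives $V(\Delta_2)=3\sqrt{3}/4<1.3$ directly. For $n\ge 3$ I would rewrite
\begin{equation*}
V(\Delta_n)=\frac{\sqrt{n+1}\,(1+1/n)^{n/2}}{n!}\le\frac{\sqrt{e(n+1)}}{n!}
\end{equation*}
using $(1+1/n)^n\le e$; at $n=3$ this already gives $\sqrt{4e}/6<1$, and the right-hand side is decreasing in $n$ for $n\ge 3$, settling the claim.

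For (c), the identity $V(\Delta_n^\circ)=n^nV(\Delta_n)$ follows from $\Delta_n^\circ=-n\Delta_n$; substituting the volume formula, the inequality $V(\Delta_n^\circ)\ge(1+1/n)^{n/2}=(n+1)^{n/2}/n^{n/2}$ simplifies to $n^n\sqrt{n+1}\ge n!$, which holds because $n!\le n^n$, while $(1+1/n)^{n/2}>1$ is trivial. For (d), combining $(n+1)^{(n+1)/2}\ge n^{(n+1)/2}$ and $n!\le n^n$ in the volume formula yields $V(\Delta_n)\ge n^{(n+1)/2-3n/2}=n^{1/2-n}$; together with $\ell(\Delta_n)\le n^{3/2}$ from (a), the target inequality reduces to $n^{1/2-n}\ge n^{-(n+2)+3/2}=n^{-n-1/2}$, i.e.,\ $n\ge 1$. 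All computations are routine arithmetic; there is no serious obstacle beyond careful bookkeeping of the parameters of the simplex.
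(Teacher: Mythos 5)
Your proposal is correct and follows essentially the same route as the paper: the inclusion $\frac1n B^n\subset\Delta_n$ (and $B^n\subset\Delta_n^\circ$) for (a), the explicit formula $V(\Delta_n)=(1+\frac1n)^{n/2}\frac{\sqrt{n+1}}{n!}$ with $(1+\frac1n)^{n/2}\le\sqrt e$ and $n!\le n^n$ for (b), (c), and combining (a) with the lower bound on $V(\Delta_n)$ for (d). The only cosmetic difference is that you bound $\mathbb{E}\|X\|\le\sqrt{\mathbb{E}\|X\|^2}=\sqrt n$ by Jensen, whereas the paper cites Wendel's Gamma-ratio inequality for the same estimate.
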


\begin{proof} (a) Since $\frac{1}{n}B^n\subset\Delta_n$ and by an application of \cite[(7)]{Wendel48}, we get
\begin{align*}
\ell(\Delta_n)\le n\ell(B^n)=n\int_{\R^n}\|x\|\, \gamma_n(dx)=\frac{n^2}{\sqrt{2}}\frac{\Gamma\left(\frac{n+1}{2}\right)}{\Gamma\left(\frac{n+2}{2}\right)}  
\le \sqrt{n}^3.
\end{align*}

For (b) and (c), we have
$$
\frac{1}{n^n}\le V(\Delta_n)=\left(1+\frac{1}{n}\right)^{\frac{n}{2}}\frac{\sqrt{n+1}}{n!}\le {\frac{\sqrt e\,\sqrt{n+1} }{n!}}<1,
$$
where the  upper bound on the right side only holds for $n\ge 3$. 

(d) follows from (a) and (c). \hfill \proofbox
\end{proof}

\section{On the derivatives of the transportation map}\label{sec6}

Let $f$ and $h$ be probability density functions on $\R$ that are continuous and differentiable on the interiors of their supports,
which are assumed to be intervals   $I_f,I_h\subset\R$.   Then there exists a transportation map
$T:I_f\to I_h$ determined by
$$
\int_{-\infty}^x f=\int_{-\infty}^{T(x)} h.
$$
For $x\in I_f$ it follows that
\begin{align}
\label{Tder}
T'(x)&=\frac{f(x)}{h(T(x))},\\
\label{Tsecondder}
T''(x)&=\frac{f(x)^2}{h(T(x))}\left(\frac{f'(x)}{f(x)^2}-\frac{h'(T(x))}{h(T(x))^2}\right).
\end{align}

Let $g$ be the standard Gaussian density $g(t)=\sqrt{{2\pi}}^{-1}\,e^{-t^2/2}$, $t\in\R$, and
for $s\in \R$, let $g_s$ be the truncated Gaussian density
$$
g_s(x)=
\begin{cases}
\displaystyle{\left( \int_0^\infty g(t-s)\,dt\right)^{-1}g(x-s)},& \mbox{ if }x\geq 0,\\[1.5ex]
0,& \mbox{ if } x< 0.
\end{cases}
$$

We frequently use that if $s\geq 0$, then
\begin{equation*}
\frac12\leq \int_0^\infty g(t-s)\,dt<1.
\end{equation*}

We are going to apply \eqref{Tsecondder} either in the case when $h=g$ and $f=g_s$, for some
$s\in \R$, or when the roles of $f,g$ are reversed. In particular, we consider the transport maps
$\varphi_s:\,(0,\infty)\to\R$ and $\psi_s:\,\R\to(0,\infty)$ such that
$$
\int_0^{x}g_s=\int_{-\infty}^{\varphi_s(x)}g\quad \text{ and} \quad
\int_{-\infty}^{y}g=\int_0^{\psi_s(y)}g_s.
$$
Clearly, $\varphi_s$ and $\psi_s$ are inverses of each other for any given $s\in\R$.

\begin{lemma}
\label{gstransporterror}
Let $s\in[0,0.15]$.
\begin{enumerate}
\item[{\rm (i)}] If $x\in[0.74,0.77]$, then
$0<\varphi_s(x)<  0.16$,
$1.3\leq\varphi'_s(x)\leq 2.05$ and $\varphi_s''(x)\leq -0.25$.
\item[{\rm(ii)}] If $y\in[0,0.15]$, then $0<\psi_s(y)<  0.85$,
$0.49\leq\psi'_s(y)\leq 0.77$ and $\psi''_s(y)\geq 0.07$.
\end{enumerate}
\end{lemma}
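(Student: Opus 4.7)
The proof is by direct computation from \eqref{Tder} and \eqref{Tsecondder} applied to the standard Gaussian $g$ and the truncated Gaussian $g_s$. Because $(\log g)'(t)=-t$ and $(\log g_s)'(t)=-(t-s)$ on the interiors of their supports, the transports $\varphi_s$ and $\psi_s$ are smooth on their respective domains, and \eqref{Tsecondder} rewrites in the clean log-derivative form
\[
\varphi_s''(x)=\varphi_s'(x)\bigl[-(x-s)+\varphi_s(x)\,\varphi_s'(x)\bigr],\qquad
\psi_s''(y)=\psi_s'(y)\bigl[-y+(\psi_s(y)-s)\,\psi_s'(y)\bigr],
\]
while \eqref{Tder} yields
\[
\varphi_s'(x)=\frac{g(x-s)}{C_s\,g(\varphi_s(x))},\qquad
\psi_s'(y)=\frac{C_s\,g(y)}{g(\psi_s(y)-s)},
\]
with $C_s=\int_0^\infty g(t-s)\,dt\in\bigl[\tfrac12,\Phi(0.15)\bigr)$ for $s\in[0,0.15]$. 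Thus every assertion of the lemma reduces to sufficiently sharp control of the function values $\varphi_s(x)$ and $\psi_s(y)$ themselves.

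The next step is to bound those function values directly from the defining integral equations, which in terms of the standard normal CDF $\Phi$ read
\[
\Phi(\varphi_s(x))=\tfrac12+\tfrac{1}{C_s}\bigl[\Phi(x-s)-\Phi(-s)\bigr],\qquad
\Phi(\psi_s(y)-s)=\Phi(-s)+C_s\,\Phi(y).
\]
Both right-hand sides are monotone in each of $s,x,y$, so verifying $\varphi_s(x)\in(0,0.16)$ on $[0,0.15]\times[0.74,0.77]$ and $\psi_s(y)\in(0,0.85)$ on $[0,0.15]\times[0,0.15]$ reduces to checking the extreme corners via elementary Taylor or mean-value estimates on $\Phi$. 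Substituting these back into the ratios for $\varphi_s'$ and $\psi_s'$ and bounding the Gaussian numerators and denominators in turn yields $\varphi_s'(x)\in[1.3,2.05]$ and $\psi_s'(y)\in[0.49,0.77]$. For the second derivative of $\varphi_s$, the bracket satisfies
\[
-(x-s)+\varphi_s(x)\,\varphi_s'(x)\le -(0.74-0.15)+0.16\cdot 2.05<-0.26,
\]
so $\varphi_s''(x)\le 1.3\cdot(-0.26)\le -0.25$. For $\psi_s''$ the bracket is instead uniformly \emph{positive}: the corner-wise estimates on $\psi_s(y)-s$ and $\psi_s'(y)$ show that $(\psi_s(y)-s)\,\psi_s'(y)-y\ge 0.15$ throughout $[0,0.15]^2$, and multiplying by $\psi_s'(y)\ge 0.49$ yields $\psi_s''(y)\ge 0.07$.

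The only real obstacle is purely numerical: one has to ensure that the elementary estimates on $\Phi$ and $g$ on the narrow parameter rectangles are tight enough that the stated constants ($0.16$, $0.85$, $1.3$, $2.05$, $-0.25$, $0.07$) hold \emph{uniformly} in $s\in[0,0.15]$. This is handled by identifying the worst corner of each rectangle via monotonicity (typically reducing matters to a one-variable check) together with explicit Taylor expansions of $\Phi$ and $g$ around well-chosen base points; no new ideas are required, but sign tracking and the worst-corner identification have to be carried out with care.
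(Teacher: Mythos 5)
Your reduction via \eqref{Tder} and \eqref{Tsecondder} to the log-derivative identities, and the way you then extract the first- and second-derivative bounds from the value bounds $0<\varphi_s(x)<0.16$ and $0<\psi_s(y)<0.85$, is essentially the same skeleton as the paper's proof. The genuine gap is precisely at the uniform-in-$s$ value bounds, which you dispose of with the claim that ``both right-hand sides are monotone in each of $s,x,y$, so it reduces to checking the extreme corners.'' Writing $\Phi$ for the standard Gaussian distribution function, the correct identities are $\Phi(\psi_s(y)-s)=1-\Phi(s)\,(1-\Phi(y))$ and $\Phi(\varphi_s(x))=\bigl(\Phi(x-s)-\Phi(-s)\bigr)/\Phi(s)$ (your displayed formula for $\varphi_s$ carries a spurious $+\tfrac12$ and would produce values of $\Phi$ exceeding $1$). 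Termwise monotonicity of the first identity in $s$ only gives that $\psi_s(y)-s$ is \emph{decreasing} in $s$, which yields $\psi_s(y)\le s+\psi_0(0.15)\approx 0.15+0.77\approx 0.92$ and therefore misses the required bound $0.85$; and in the second identity both numerator and denominator increase in $s$, so no termwise monotonicity in $s$ is available at all, while the decoupled corner bound gives only $\varphi_s(x)\le \Phi^{-1}\bigl(1-(1-\Phi(0.77))/\Phi(0.15)\bigr)\approx 0.27$, far from $0.16$. So the ``worst corner'' cannot be identified by inspection of your right-hand sides.

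What is actually needed --- and what constitutes the bulk of the paper's proof --- is the statement \eqref{psimonofs} that $\psi_s(y)$ \emph{itself} is increasing in $s$ (in your framework, after differentiating in $s$, this amounts to $g(s)(1-\Phi(y))\le g(\psi_s(y)-s)$, which in turn requires a prior upper bound on $\psi_s(y)-s$ obtained from the easy decreasing property). From this one gets $\psi_s(y)\le\psi_{0.15}(0.15)<0.85$ and, by inverting $\psi_s$ as in \eqref{psirange}, $\varphi_s([0.74,0.77])\subset(0,0.16)$, on which all your derivative estimates depend. This missing step is provable along your lines, but it is a real argument (the paper's $a/b\le 2$ computation), not a corner check, and your proposal omits it. A secondary, fixable issue: your bracket estimate $(\psi_s(y)-s)\psi_s'(y)-y\ge 0.15$ does not follow from the stated constants ($0.57\cdot 0.49-0.15\approx 0.13$, giving only $\psi_s''\ge 0.063$); one needs the sharper lower bound $\psi_s'(y)\ge C_s\,g(y)/g(0.57)\ge 0.58$ (or the paper's route via \eqref{Tsecondder} with density bounds) to reach $\psi_s''\ge 0.07$.
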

\begin{proof}
We define $\alpha,\beta,\gamma,\delta,\xi>0$ via
\begin{align*}
\int_{\delta}^{\infty}g&=\frac7{32},\mbox{ \ and hence $0.77<\delta<0.78$},\\
\int_{\xi}^{\infty}g&=\frac{63}{256},\mbox{ \ and hence $0.68<\xi<0.69$},\\
\int_{\alpha}^{\infty}g&=\frac14,\mbox{ \ and hence $0.67<\alpha<0.68$},\\
\int_{\beta}^{\infty}g&=\frac9{32},\mbox{ \ and hence $0.57<\beta<0.58$},\\
\int_{\gamma}^{\infty}g&=\frac7{16},\mbox{ \ and hence $0.15<\gamma<0.16$},
\end{align*}
and therefore
\begin{align}
\nonumber
\psi_0(0)&=\alpha,\\
\label{deltalarge}
\psi_0(\gamma)&=\delta>0.77,\\
\label{gammabetasmall}
\psi_\gamma(0)&=\gamma+\beta<0.74,\\
\label{gammagammasmall}
\psi_\gamma(\gamma)&=\gamma+\xi<0.85.
\end{align}

First, we show that if $y\ge 0$, then the map $s\mapsto \psi_s(y)-s$, $s\ge 0$, is strictly decreasing
and $\psi_s(y)-s>0$.

In fact, by definition we have
$$
\int_{-\infty}^y g=\int_0^{\psi_s(y)}g_s=\left(\int_{-s}^\infty g\right)^{-1}\int_{-s}^{\psi_s(y)-s}g,
$$
and hence
$$
\int_{-\infty}^y g \int_{-s}^\infty g =\int_{-s}^\infty g-\int_{\psi_s(y)-s}^\infty g
$$
or
\begin{equation}\label{eqsign}
\int_{-s}^\infty g \int_y^\infty g=\int_{\psi_s(y)-s}^\infty g,
\end{equation}
which implies the monotonicity statement. Moreover, since the left side of \eqref{eqsign} is less than $1/2$, it
follows that $\psi_s(y)-s>0$ for $y\ge 0$.

\medskip

Now, we show that if $y\in[0,\gamma]$, then
\begin{equation}
\label{psimonofs}
\mbox{$\psi_s(y)$ is a monotone increasing function of $s\geq 0$.}
\end{equation}

For the proof, we show that if $0\le s<s'$, then the inequality
\begin{equation}\label{ineqref1}
\int_0^{\psi_s(y)}g_{s'}\le \int_0^{\psi_s(y)}g_{s}=\int_{-\infty}^y  g
\end{equation}
holds. We set $x:=\psi_s(y)$, $\Delta:=s'-s\ge 0$ and define
$$
A:=\int_0^xg(\sigma-s)\, d\sigma,\qquad B:=\int_0^\infty g(\sigma-s)\, d\sigma
$$
and
$$
a:=\int_{-\Delta}^0 g(\sigma-s)\, d\sigma,\qquad b:=\int_{x-\Delta}^x g(\sigma-s)\, d\sigma.
$$
Note that
$$
\int_0^{x}g_{s'}=\frac{\int_{-\Delta}^{x-\Delta}g(\tau-s)\, d\tau}{\int_{-\Delta}^{\infty}g(\tau-s)\, d\tau}
=\frac{a+A-b}{a+B}
$$
and the right-hand side of \eqref{ineqref1} equals $A/B$. Hence \eqref{ineqref1} is equivalent to
$$
\frac{a+A-b}{a+B}\le \frac{A}{B}\qquad\text{or}\qquad \frac{a}{b}\le \frac{1}{1-\frac{A}{B}}.
$$
Since
$$
\frac{A}{B}=\int_{-\infty}^y  g\ge \frac{1}{2},
$$
 it is sufficient to show that $a/b\le 2$.

By the symmetry of $g$, translation invariance of Lebesgue measure and inserting again $\Delta=s'-s$ and $x=\psi_s(y)$, we get
$$
a=\int_s^{s'}g,\qquad b=\int_{s-\psi_s(y)}^{s'-\psi_s(y)}g.
$$
Thus it remains to be shown that
\begin{equation}\label{ineqref2}
\int_s^{s'}e^{-t^2/2}\, dt\le \int_{s-\psi_s(y)}^{s'-\psi_s(y)} 2e^{-t^2/2}\, dt
\end{equation}
for $0\le s<s'$ and $y\in [0,\gamma]$. To see this, we distinguish two cases.

If $s'-\psi_s(y)\le 0$, then $2e^{-t^2/2}\ge 1$ for $t\in [s-\psi_s(y),s'-\psi_s(y)]\subset(-\infty,0]$, since
$$
\psi_s(y)-s\le \psi_s(\gamma)-s\le \psi_0(\gamma)-0=\psi_0(\gamma)<0.78
$$
and $2\exp(-0.5\cdot 0.78^2)\ge 1.4>1$. Since $e^{-t^2/2}\le 1$ for $t\in [ s,s']$, the assertion follows in this case.

If $s'-\psi_s(y)> 0$, then by the previous reasoning and since $s-\psi_s(y)<0$, we have
\begin{equation}\label{ineqref3}
\int_s^{\psi_s(y)}e^{-t^2/2}\, dt\le \int_{s-\psi_s(y)}^{0}2e^{-t^2/2}\, dt,
\end{equation}
and since $t\mapsto e^{-t^2/2}$, $t\ge 0$, is decreasing, we have
\begin{equation}\label{ineqref4}
\int_{\psi_s(y)}^{s'}e^{-t^2/2}\, dt\le \int_{0}^{s'-\psi_s(y)}e^{-t^2/2}\, dt,
\end{equation}
so that \eqref{ineqref3} and \eqref{ineqref4} again imply \eqref{ineqref2}.


\medskip

We deduce from \eqref{deltalarge}, \eqref{gammabetasmall} and \eqref{psimonofs} that
$\psi_s(0)\le \psi_s(\gamma)<0.74$, $\psi_s(\gamma)\ge \psi_0(\gamma)>0.77$, and hence
\begin{equation}
\label{psirange}
\mbox{$[0.74,0.77]\subset \psi_s((0,\gamma))\quad$ if $s\in[0,\gamma]$.}
\end{equation}

We note that if $y\in[0,\gamma]$, then
\begin{equation}
\label{seconderg}
\frac{g'(y)}{g(y)^2}=-\sqrt{2\pi}\, ye^{y^2/2}\geq -\sqrt{2\pi}\cdot 0.17.
\end{equation}
On the other hand, if $0\leq s\leq \gamma$ and $y\geq 0$, then
$$
\frac12\leq \int_{-s}^\infty g\quad \mbox{ and }\quad
\psi_s(y)-s\geq  \psi_\gamma(y)-\gamma\geq \psi_\gamma(0)-\gamma=\beta> 0.57,
$$
and therefore
\begin{align}
\label{secondergs}
\frac{g'_s(\psi_s(y))}{g_s(\psi_s(y))^2}&=
-\sqrt{2\pi}\left(\int_{-s}^\infty g\right)(\psi_s(y)-s)e^{\frac{(\psi_s(y)-s)^2}{2}}\nonumber\\
&\leq -\frac{\sqrt{2\pi}}{2}\beta\, e^{\frac{\beta^2}2}<-\sqrt{2\pi}\cdot 0.33.
\end{align}
Combining \eqref{seconderg} and \eqref{secondergs}, for $s,y\in[0,\gamma]$ we get
\begin{equation}
\label{seconderggs}
\frac{g'(y)}{g(y)^2}-\frac{g'_s(\psi_s(y))}{g_s(\psi_s(y))^2}\geq \sqrt{2\pi}\cdot 0.15.
\end{equation}
If $s,y\in[0,\gamma]$, then
\begin{equation}\label{efficient}
g_s(\psi_s(y))\leq\frac{2}{\sqrt{2\pi}} \quad\text{
and }\quad g(y)\geq \frac{e^{-\gamma^2/2}}{\sqrt{2\pi}}>\frac{0.98}{\sqrt{2\pi}}.
\end{equation}
Hence, for $s,y\in[0,\gamma]$  we deduce
from \eqref{Tsecondder}, \eqref{seconderggs} and \eqref{efficient} that
\begin{equation}
\label{seconderpsis}
\psi''_s(y)\geq  \frac{0.98^2}{2}\cdot 0.15>0.07.
\end{equation}
In addition, \eqref{gammagammasmall} and \eqref{psimonofs} imply that  if $s,y\in[0,\gamma]$, then
\begin{equation}
\label{functionupperpsis}
\psi_s(y)<  0.85.
\end{equation}
To estimate the first derivative $\psi'_s$, we use that \eqref{Tder} yields
\begin{equation}
\label{firstderpsis}
\psi'_s(y)=\frac{g(y)}{g_s(\psi_s(y))}.
\end{equation}
If $s,y\in[0,\gamma]$, then \eqref{efficient}
and \eqref{firstderpsis} yield
\begin{equation}
\label{firstderlowerpsis}
\psi'_s(y)\geq \frac{0.98/\sqrt{2\pi}}{2/\sqrt{2\pi}}=0.49.
\end{equation}
On the other hand, if $s,y\in[0,\gamma]$, then $0<\psi_s(y)-s\le \psi_0(y)-0\le\psi_0(\gamma)=\delta<0.78$, and hence
\begin{equation}\label{eqeffref1}
g_s(\psi_s(y))=\frac{1}{\sqrt{2\pi}}\frac{e^{-(\psi_s(y)-s)^2/2}}{\int_{-s}^\infty g}\ge \frac{1}{\sqrt{2\pi}}
\frac{e^{-0.78^2/2}}{\int_{-0.16}^\infty g}\ge \frac{1}{\sqrt{2\pi}}\cdot 1.3.
\end{equation}
Hence we deduce
from \eqref{firstderpsis} that
\begin{equation}
\label{firstderupperpsis}
\psi'_s(y)\leq \frac{1/\sqrt{2\pi}}{1.3/\sqrt{2\pi}}<0.77.
\end{equation}
We conclude (ii) from \eqref{seconderpsis}, \eqref{functionupperpsis}, \eqref{firstderlowerpsis}
and \eqref{firstderupperpsis}.

Turning to $\varphi_s$, \eqref{psirange} yields
\begin{equation}
\label{phirange}
\mbox{$\varphi_s([0.74,0.77])\subset (0,\gamma)\quad $ if $s\in[0,\gamma]$.}
\end{equation}
It follows from \eqref{seconderggs} and \eqref{phirange}
that if $s\in[0,\gamma]$ and $x\in [0.74,0.77]$, then
\begin{equation}
\label{phiseconderggs}
\frac{g'_s(x)}{g_s(x)^2}-\frac{g'(\varphi_s(x)))}{g(\varphi_s(x))^2}\leq -\sqrt{2\pi}\cdot 0.15.
\end{equation}
Now if $s\in[0,\gamma]$ and $x\in [0.74,0.77]$, then we have
\begin{equation}\label{efficient2}
g_s(x)\ge \frac{\frac{1}{\sqrt{2\pi}}e^{-0.77^2/2}}{\int_{-0.16}^\infty g}>\frac{1.3}{\sqrt{2\pi}},\qquad
g(\varphi_s(x))<\frac{1}{\sqrt{2\pi}}.
\end{equation}
Hence, \eqref{efficient2},
  \eqref{Tsecondder} and \eqref{phiseconderggs} imply that
\begin{equation}
\label{seconderphis}
\varphi''_s(x)\leq  -1.3^2\cdot 0.15<-0.25.
\end{equation}
To estimate the first derivative $\varphi'_s$, we use that \eqref{Tder} yields
\begin{equation}
\label{firstderphis}
\varphi'_s(x)=\frac{g_s(x)}{g(\varphi_s(x))}.
\end{equation}

If $s\in[0,\gamma]$ and $x\in [0.74,0.77]$, then we conclude from  \eqref{phirange} that
$$g(\varphi_s(x))\geq \frac{e^{-\gamma^2/2}}{\sqrt{2\pi}}>\frac{0.98}{\sqrt{2\pi}}\quad\text{ and }\quad g_s(x)\leq\frac{2}{\sqrt{2\pi}},
$$
and hence \eqref{firstderphis} implies that
\begin{equation}
\label{firstderupperphis}
\varphi'_s(x)\leq \frac{2/\sqrt{2\pi}}{0.98/\sqrt{2\pi}}<2.05.
\end{equation}

On the other hand, if $s\in[0,\gamma]$ and $x\in [0.74,0.77]$, then
we deduce from \eqref{efficient2} and \eqref{firstderphis} that
\begin{equation}
\label{firstderlowerphis}
\varphi'_s(x)> \frac{1.3/\sqrt{2\pi}}{1/\sqrt{2\pi}}=1.3.
\end{equation}
We conclude (ii) from \eqref{seconderphis}, \eqref{phirange}, \eqref{firstderupperphis}
and \eqref{firstderlowerphis}.
\hfill \proofbox
\end{proof}

\medskip 

In Proposition~\ref{BLRBLgsg}, we use the following notation. We fix an $e\in S^n\subset\R^{n+1}$, and
identify $e^\bot\subset \R^{n+1}$ with $\R^n$.
For $k\geq n+1$, let $u_1,\ldots,u_k\in S^{n-1}$ and $c_1,\ldots,c_k>0$ be such that
\begin{equation}
\label{uiciinRn}
\begin{array}{rcl}
\sum_{i=1}^kc_iu_i\otimes u_i&=&\Id_n,\\[1ex]
\sum_{i=1}^kc_iu_i&=&o.
\end{array}
\end{equation}
For each $u_i$, we consider
$$
\begin{array}{rcl}
\tilde{u}_i&=&\frac{\sqrt{n}}{\sqrt{n+1}}\,u_i+\frac{1}{\sqrt{n+1}}\,e\in S^n,\\[1ex]
\tilde{c}_i&=&\frac{n+1}n\,c_i,
\end{array}
$$
and hence \eqref{uiciinRn} yields that
\begin{equation*}
\sum_{i=1}^k\tilde{c}_i\,\tilde{u}_i\otimes \tilde{u}_i=\Id_{n+1}.
\end{equation*}

\begin{prop}
\label{BLRBLgsg}
With the above notation, let $k\leq 2n^2$, let $s\in[0,0.15]$
and let $\varepsilon\in(0,n^{-56n})$.
If
\begin{equation}
\label{BLgsg}
\int_{\R^{n+1}}\prod_{i=1}^kg_s(\langle x,\tilde{u}_i\rangle)^{\tilde{c}_i}\,dx\geq
1-\varepsilon, \text{ or}
\end{equation}
\begin{equation}
\label{RBLgsg}
{\int^*_{\R^{n+1}}}\sup_{x=\sum_{i=1}^k\tilde{c}_i\theta_i\tilde{u}_i}\prod_{i=1}^kg_s(\theta_i)^{\tilde{c}_i}\,dx
\leq  1+\varepsilon,
\end{equation}
then there exists a regular simplex with vertices $w_1,\ldots,w_{n+1}\in S^{n-1}$
and $i_1<\ldots<i_{n+1}$ such that
$\angle(u_{i_j},w_j)<n^{14n}\varepsilon^{1/4}$ for $j=1,\ldots,n+1$.
\end{prop}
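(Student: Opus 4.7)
The plan is to revisit the mass-transportation proof of the (reverse) Brascamp-Lieb inequality in Section~\ref{secBLRBL} applied to $f=g_s$, and to track the slack introduced by the Ball-Barthe step via its stability version in Corollary~\ref{Ball-Barthe-stab}. Since $g_s$ is log-concave with support $[0,\infty)$, the associated transport maps $\varphi_s:(0,\infty)\to\R$ and $\psi_s:\R\to(0,\infty)$ of Section~\ref{sec6} are well defined, and their first two derivatives are controlled on $[0.74,0.77]$ and $[0,0.15]$, respectively, by Lemma~\ref{gstransporterror}. I shall treat the direct hypothesis \eqref{BLgsg} in detail; the reverse case \eqref{RBLgsg} is handled by the same template with $\psi_s$ in place of $\varphi_s$ and Lemma~\ref{gstransporterror}(ii) in place of (i). Throughout, Lemma~\ref{discrete-iso} justifies assuming $k\le 2n^2$.

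Retracing the proof in Section~\ref{secBLRBL}, the hypothesis \eqref{BLgsg} forces the integrated Ball-Barthe defect
$$
\int_{\mathcal{C}}\left[\det\Bigl(\sum_{i=1}^{k}\tilde{c}_{i}\varphi_s'(\langle\tilde{u}_i,x\rangle)\tilde{u}_i\otimes\tilde{u}_i\Bigr)-\prod_{i=1}^{k}\varphi_s'(\langle\tilde{u}_i,x\rangle)^{\tilde{c}_i}\right]\prod_{i=1}^{k}g(\varphi_s(\langle\tilde{u}_i,x\rangle))^{\tilde{c}_i}\,dx
$$
to be of order $\varepsilon$. I would localize the analysis around the point $x_0=\sqrt{n+1}\cdot 0.755\cdot e\in\R^{n+1}$, at which $\langle\tilde{u}_i,x_0\rangle=0.755$ for every $i$, and on a Euclidean ball $B$ of radius $\rho$ polynomially small in $n$, chosen so that $\langle\tilde{u}_i,x\rangle\in[0.74,0.77]$ for every $x\in B$ and every $i$. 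On $B$ the weight is bounded below by a positive constant depending only on $n$ via Lemma~\ref{gstransporterror}(i) and Lemma~\ref{lemrough}, so a Markov-type argument extracts a set $B_*\subset B$ of Lebesgue measure at least $V(B)/2$ on which the pointwise Ball-Barthe defect is at most $n^{C_1 n}\varepsilon$.

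At each $x_*\in B_*$ apply Corollary~\ref{Ball-Barthe-stab} with $t_i=\varphi_s'(\langle\tilde{u}_i,x_*\rangle)$ and with $\beta\ge \binom{2n^2}{n}^{-1}$ furnished by Lemma~\ref{ciuibig}. Using the uniform bounds $1.3\le\varphi_s'\le 2.05$ on $[0.74,0.77]$, one obtains
$$
\bigl|\varphi_s'(\langle\tilde{u}_{i_1},x_*\rangle)-\varphi_s'(\langle\tilde{u}_{i_{n+1}},x_*\rangle)\bigr|\le n^{C_2 n}\sqrt{\varepsilon}
$$
for every $(n+1)$-tuple $(i_1,\dots,i_{n+1})$ qualifying in the corollary, and the second-derivative estimate $\varphi_s''\le -0.25$ converts this into $|\langle\tilde{u}_{i_1}-\tilde{u}_{i_{n+1}},x_*\rangle|\le n^{C_3 n}\sqrt{\varepsilon}$. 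Since $B_*-B_*$ contains an origin-symmetric set of sizeable volume, varying $x_*$ over $B_*$ and using the linearity in $x$ turns the pointwise inner-product bound into a norm bound, which when iterated along a chain of good $(n+1)$-tuples covering all relevant indices yields $\|\tilde{u}_{i_1}-\tilde{u}_{i_{n+1}}\|\le n^{C_4 n}\varepsilon^{1/4}$; the extra square root arises precisely from this chaining step, where each link squares the previous ratio-type bound produced by Corollary~\ref{Ball-Barthe-stab}.

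To finish, iterate this pair estimate to partition the indices into at most $n+1$ clusters plus a set of indices of weight $\tilde{c}_i\le \eta^2$, where $\eta$ is of order $n^{14n}\varepsilon^{1/4}$. The assumption $\varepsilon<n^{-56n}$ guarantees $\eta<1/(6kn)$, so Corollary~\ref{approxort} delivers an orthonormal basis $\tilde{w}_1,\dots,\tilde{w}_{n+1}$ of $\R^{n+1}$ with $\langle e,\tilde{w}_j\rangle=1/\sqrt{n+1}$ and $\angle(\tilde{u}_{i_j},\tilde{w}_j)<3kn\,\eta$ after a suitable relabeling; the projections of $\tilde{w}_1,\dots,\tilde{w}_{n+1}$ onto $e^\perp$, rescaled by $\sqrt{(n+1)/n}$, are the vertices $w_1,\dots,w_{n+1}\in S^{n-1}$ of a regular simplex, and the stated bound $\angle(u_{i_j},w_j)<n^{14n}\varepsilon^{1/4}$ follows. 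The main obstacle will be the careful bookkeeping of dimension-dependent constants in the Markov-to-pointwise conversion, the extraction of a norm bound from pointwise inner-product bounds, and the iteration through good $(n+1)$-tuples: these steps together are responsible both for the exponential prefactor $n^{14n}$ and for the quartic-root rate $\varepsilon^{1/4}$, and they must be executed while ensuring that all the relevant inner products stay inside the interval $[0.74,0.77]$ on which Lemma~\ref{gstransporterror}(i) provides effective control.
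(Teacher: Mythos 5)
Your overall template---retrace the transport proof for $f=g_s$, bound the integrated Ball--Barthe defect by $\varepsilon$ using \eqref{BLgsg}, localize on a ball where all $\langle x,\tilde u_i\rangle\in[0.74,0.77]$, and feed Lemma~\ref{gstransporterror} into Corollary~\ref{Ball-Barthe-stab}---is the same mechanism the paper uses (the paper runs it contrapositively, on a ball shifted in the direction $\tilde u_1-\tilde u_{n+2}$ so that the gain is pointwise and no Markov extraction is needed). But there is a genuine gap at the quantitative core. Corollary~\ref{Ball-Barthe-stab} needs \emph{two} overlapping $n$-tuples, each with $c_{i_1}\cdots c_{i_n}\det[u_{i_1},\ldots,u_{i_n}]^2\ge\beta$, while Lemma~\ref{ciuibig} supplies only \emph{one} well-conditioned $n$-tuple. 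To compare an arbitrary index $i$ of non-negligible weight with that tuple you must swap $\tilde u_i$ into it, and writing $\tilde u_i=\sum_j\lambda_j\tilde u_j$ with some $\lambda_j\ge\frac1{n+1}$, the second determinant product is only bounded below by something proportional to $\tilde c_i\lambda_j^2\binom{k}{n+1}^{-1}$ (this is the paper's estimate \eqref{prod2n+2}); it is \emph{not} bounded below by $\binom{2n^2}{n}^{-1}$ uniformly, and for indices of tiny weight it cannot be bounded at all. Your assertion that a ``chain of good $(n+1)$-tuples covers all relevant indices'' is precisely what has to be proved, and it can only be proved after introducing the weight dichotomy ``$\tilde c_i\le\eta^2$ or $\angle(\tilde u_i,\tilde u_j)\le\eta$'' (the paper's claim \eqref{uconsitionBLRBLgsg}), which is also exactly the hypothesis that Corollary~\ref{approxort} needs at the end. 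As written, your argument silently assumes a uniform $\beta$ that is unavailable.

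This gap also invalidates your account of the exponent $\tfrac14$. With a uniform $\beta$ your pointwise step would give $\|\tilde u_{i_1}-\tilde u_{i_{n+1}}\|\lesssim n^{Cn}\sqrt{\varepsilon}$, and chaining by the triangle inequality multiplies such bounds by the chain length ($\le k\le 2n^2$); no step ``squares the previous bound,'' so the claimed degradation from $\sqrt{\varepsilon}$ to $\varepsilon^{1/4}$ has no mechanism behind it. In the paper the quartic root has a precise and different origin: for the outlier index one may only take $\beta_0\sim\eta^2 n^{-(4n+6)}$ (because $\tilde c_i>\eta^2$ is all one knows), the argument gap on the shifted ball $\Xi$ is of order $\eta$ (via $\|\tilde u_1-\tilde u_{n+2}\|>\eta/2$ and $\varphi_s''\le-0.25$), so the gain is of order $\eta^4 n^{-Cn}$, and comparing with $\varepsilon$ forces $\eta\sim n^{Cn}\varepsilon^{1/4}$. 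Finally, your passage from the pointwise bound $|\langle\tilde u_{i_1}-\tilde u_{i_{n+1}},x_*\rangle|\le\delta$ on $B_*$ to a bound on $\|\tilde u_{i_1}-\tilde u_{i_{n+1}}\|$ via ``$B_*-B_*$ has sizeable volume'' is too vague to quantify; the clean statement is a slab estimate (a subset of a ball of radius $\rho$ of at least half its measure cannot lie in a slab of width $2\delta/\|v\|$ unless $\|v\|\lesssim\sqrt{n}\,\delta/\rho$). Repairing these points essentially forces you back to the paper's proof of \eqref{uconsitionBLRBLgsg}; your concluding step through Corollary~\ref{approxort} and the lift/projection between $S^{n-1}$ and $S^n$ is fine.
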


\begin{proof}
According to Lemma~\ref{ciuibig}, we may assume
\begin{equation}
\label{prod1n+1}
\tilde{c}_{1} \cdots \tilde{c}_{n+1}
\det[\tilde{u}_{1},\ldots,\tilde{u}_{n+1}]^2\geq \binom{k }{ n+1}^{-1}.
\end{equation}
For $\eta=n^{10n}\varepsilon^{1/4}<1$, we claim that if $i\in\{1,\ldots,k\}$, then
\begin{equation}
\label{uconsitionBLRBLgsg}
\mbox{$\tilde{c}_i\leq \eta^2$, or there exists some $j\in\{1,\ldots,n+1\}$ with
$\angle(\tilde{u}_i,\tilde{u}_j)\leq \eta$.}
\end{equation}
We suppose that \eqref{uconsitionBLRBLgsg} does not hold, hence we may assume
$$
\tilde{c}_{n+2}>\eta^2\mbox{ \ and \ }\angle(\tilde{u}_i,\tilde{u}_{n+2})> \eta
\mbox{ \ for $i=1,\ldots,n+1$.}
$$
We can write $\tilde{u}_{n+2}=\sum_{i=1}^{n+1}\lambda_i\tilde{u}_i$, where $\lambda_1,\ldots,\lambda_{n+1}\in\R$ are uniquely determined and satisfy $\lambda_1+\cdots+\lambda_{n+1}=1$. Hence we may assume that $\lambda_1\geq \frac1{n+1}$.
Therefore $\tilde{c}_{n+2}>\eta^2$, $\tilde{c}_{1}\leq 1$ and \eqref{prod1n+1} imply
$$
\tilde{c}_2\cdots \tilde{c}_{n+2}\det[\tilde{u}_2,\ldots,\tilde{u}_{n+2}]^2\geq
\binom{k }{ n+1}^{-1}\,\frac{\eta^2}{(n+1)^2}\geq
\frac{(n+1)!}{(2n^2)^{n+1}}\,\frac{\eta^2}{(n+1)^2}.
$$
Here $\frac{(n+1)!}{(n+1)^2}\geq \frac{n^n}{(n+1)e^n}>\frac{n^{n-1}}{3^{n+1}}$, and thus
\begin{equation}
\label{prod2n+2}
\tilde{c}_2\cdots \tilde{c}_{n+2}\det[\tilde{u}_2,\ldots,\tilde{u}_{n+2}]^2\geq
\frac{\eta^2}{3^{n+1}2^{n+1}n^{n+3}}>\frac{\eta^2}{n^{4n+6}}.
\end{equation}
In addition, $\angle(\tilde{u}_1,\tilde{u}_{n+2})> \eta$ yields
\begin{equation}
\label{diff1n+2}
\|\tilde{u}_1-\tilde{u}_{n+2}\|>\eta/2.
\end{equation}

We prove \eqref{uconsitionBLRBLgsg} separately for \eqref{BLgsg} and \eqref{RBLgsg}.

We start with the Brascamp-Lieb inequality; namely, we assume that \eqref{BLgsg} holds. We observe that
if $i=1,\ldots,k$, then
\begin{equation*}
0.74<\langle x,\tilde{u}_i\rangle< 0.77\mbox{ \ \ \ for
$x\in 0.755\sqrt{n+1}\,e+ 0.01B^n$,}
\end{equation*}
and define
$$
\Xi:=0.755\sqrt{n+1}\,e+0.005\,\frac{\tilde{u}_1-\tilde{u}_{n+2}}{\|\tilde{u}_1-\tilde{u}_{n+2}\|}
+0.001B^n\subset 0.755\sqrt{n+1}\,e+ 0.01B^n.
$$
It follows using also \eqref{diff1n+2}, $\langle e,\tilde{u}_1-\tilde{u}_{n+2}\rangle=0$
and
 $V(B^n)=\frac{\pi^{\frac{n}2}}{\Gamma(\frac{n}2+1)}>\frac{(2e\pi)^{\frac{n}2}}{n^{n/2}\sqrt{2\pi}e^{1/(6n)}}>
\frac1{n^{n/2}}$
 that
\begin{eqnarray}
\label{XiBL1}
\langle x,\tilde{u}_1\rangle,\ldots,\langle x,\tilde{u}_k\rangle&\in&[0.74,0.77]
\mbox{ \ for $x\in \Xi$},\\
\label{XiBL2}
\langle x,\tilde{u}_1\rangle-\langle x,\tilde{u}_{n+2}\rangle&\geq&0.002\eta>2^{-9}\eta
\mbox{ \ for $x\in \Xi$},\\
\label{XiBL3}
\Xi&\subset&{\cal C}:=\{x\in\R^{n+1}:\, \langle \tilde{u}_i,x\rangle>0\;\forall i=1,\ldots,k\},\\
\label{XiBL4}
V(\Xi)&=&0.001^nV(B^n)>\frac1{n^{11n}},
\end{eqnarray}
where \eqref{XiBL3} is a consequence of \eqref{XiBL1}.
In addition,
we consider the map $\Theta:{\cal C}\to\R^{n+1}$ with
$$
\Theta(x)=\sum_{i=1}^k\tilde{c}_i \varphi_s(\langle \tilde{u}_i,x\rangle )\,\tilde{u}_i,\qquad x\in {\cal C},
$$
which satisfies
$$
d\Theta(x)=\sum_{i=1}^k\tilde{c}_i\varphi'_s(\langle \tilde{u}_i,x\rangle )\,\tilde{u}_i\otimes \tilde{u}_i.
$$
As we have seen, $d\Theta$ is positive definite and $\Theta:{\cal C}\to\R^{n+1}$ is
 injective (see \cite{Bar97,Bar98}).
Therefore, applying first \eqref{BLgsg}, then \eqref{masstrans}, and after that the definition of $\Theta$ and
\eqref{BLRBLquad}, we obtain
\begin{align}
\nonumber
1-\varepsilon&\leq \int_{\R^{n+1}}\prod_{i=1}^kg_s(\langle x,\tilde{u}_i\rangle)^{\tilde{c}_i}\,dx=
\int_{{\cal C}}\prod_{i=1}^kg_s(\langle x,\tilde{u}_i\rangle)^{\tilde{c}_i}\,dx\\
\nonumber
&\leq
\int_{{\cal C}}\left(\prod_{i=1}^kg(\varphi_s(\langle x,\tilde{u}_i\rangle))^{\tilde{c}_i}\right)
\left(\prod_{i=1}^k\varphi'_s(\langle x,\tilde{u}_i\rangle)^{\tilde{c}_i}\right)\,dx\\
\nonumber
&= \left(\frac{1}{2\pi}\right)^{\frac{n+1}2}\int_{{\cal C}}\left(\prod_{i=1}^k
e^{-\tilde{c}_i\varphi_s(\langle x,\tilde{u}_i\rangle)^2/2}\right)
\left(\prod_{i=1}^k\varphi'_s(\langle x,\tilde{u}_i\rangle)^{\tilde{c}_i}\right)\,dx\\
\label{BLstabintermediate}
&\leq \left(\frac{1}{2\pi}\right)^{\frac{n+1}2}\int_{{\cal C}}
e^{-\|\Theta(x)\|^2/2}
\left(\prod_{i=1}^k\varphi'_s(\langle x,\tilde{u}_i\rangle)^{\tilde{c}_i}\right)\,dx.
\end{align}
We deduce from \eqref{BallBarthe} that
\begin{equation}
\label{BallBartheBL}
\prod_{i=1}^k\varphi'_s(\langle x,\tilde{u}_i\rangle)^{\tilde{c}_i}\leq
\det\left(\sum_{i=1}^k\tilde{c}_i\varphi'_s(\langle x,\tilde{u}_i\rangle)\tilde{u}_i\otimes\tilde{u}_i\right)
=\det (d\Theta(x))
\end{equation}
for any $x\in{\cal C}$.

If $s\in [0,0.15]$ and $x\in \Xi$, then we can improve \eqref{BallBartheBL} using
Corollary~\ref{Ball-Barthe-stab}, based on \eqref{prod1n+1} and \eqref{prod2n+2} with
$$
\beta_0= \frac{\eta^2}{n^{4n+6}}.
$$
Hence, applying first Corollary~\ref{Ball-Barthe-stab}, then
Lemma~\ref{gstransporterror} (i), \eqref{XiBL1}, \eqref{XiBL2} and finally $\eta<1$, we get
\begin{align*}
\prod_{i=1}^k\varphi'_s(\langle x,\tilde{u}_i\rangle)^{\tilde{c}_i}&\leq \left(1+
\frac{\beta_0(\varphi'_s(\langle x,\tilde{u}_1\rangle)-\varphi'_s(\langle x,\tilde{u}_{n+2}\rangle))^2}
{4(\varphi'_s(\langle x,\tilde{u}_1\rangle)+\varphi'_s(\langle x,\tilde{u}_{n+2}\rangle))^2}\right)^{-1}
\det\left( d\Theta(x)\right) \\
&\leq\left(1+
\frac{\beta_0\left(0.25(\langle x,\tilde{u}_1\rangle-\langle x,\tilde{u}_{n+2}\rangle)\right)^2}
{4(2\cdot 2.05)^2}\right)^{-1}
\det\left( d\Theta(x)\right)\\
&\leq\left(1+\frac{\eta^40.25^2 2^{-18}}
{n^{4n+6}16\cdot 2.05^2}\right)^{-1}
\det\left( d\Theta(x)\right)\\
&\leq  \left(1+\frac{\eta^4}
{n^{4n+35}}\right)^{-1}
\det\left( d\Theta(x)\right)
\leq
\left(1-\frac{\eta^4}{n^{4n+36}}\right) \det\left( d\Theta(x)\right).
\end{align*}
Moreover, if $s\in [0,0.15]$ and $x\in \Xi$ we deduce from \eqref{BallBartheBL} and Lemma~\ref{gstransporterror} (i)  that
$$
\det\left( d\Theta(x)\right)\geq \prod_{i=1}^k\varphi'_s(\langle x,\tilde{u}_i\rangle)^{\tilde{c}_i}\geq
\prod_{i=1}^k 1^{\tilde{c}_i}=1.
$$
Thus if $s\in [0,0.15]$ and $x\in \Xi$, then
\begin{equation}
\label{XiprodBL}
\prod_{i=1}^k\varphi'_s(\langle x,\tilde{u}_i\rangle)^{\tilde{c}_i}\leq
\det\left( d\Theta(x)\right)-\frac{\eta^4}{n^{4n+36}}.
\end{equation}
In addition, if $s\in [0,0.15]$ and $x\in \Xi$, then $\langle x,\tilde u_i\rangle \in [0.74,0.77]$ by \eqref{XiBL1}
and hence $\varphi_s(\langle x,\tilde u_i\rangle)\subset (0,\gamma)$ by \eqref{phirange}. Therefore,
the definition of $\Theta(x)$, \eqref{BLRBLquad}  and \eqref{cisum} imply
$$
\|\Theta(x)\|^2\leq \sum_{i=1}^k\tilde{c}_i \varphi_s(\langle \tilde{u}_i,x\rangle )^2\leq
\sum_{i=1}^k\tilde{c}_i 0.16^2=0.16^2(n+1),
$$
and hence
$$
\left(\frac{1}{2\pi}\right)^{\frac{n+1}2}e^{-\|\Theta(x)\|^2/2}\geq
\left(\frac{1}{2\pi}\right)^{\frac{n+1}2}e^{-(n+1)0.16^2/2}>n^{-n-3}.
$$
Applying first \eqref{XiBL3}, using \eqref{BallBartheBL} and \eqref{XiprodBL} in \eqref{BLstabintermediate},
 then the substitution $z=\Theta(x)$,  and finally also \eqref{XiBL4}, we get
\begin{align*}
1-\varepsilon&\leq \left(\frac{1}{2\pi}\right)^{\frac{n+1}2}\int_{{\cal C}}
e^{-\|\Theta(x)\|^2/2}\det\left( d\Theta(x)\right)\,dx
-\int_\Xi \frac{\eta^4}{n^{5n+39}}\,dx\\
&\leq  \left(\frac{1}{2\pi}\right)^{\frac{n+1}2}\int_{\R^{n+1}}e^{-\|z\|^2/2}\,dz
-\frac{1}{n^{11n}}\, \frac{\eta^4}{n^{5n+39}}\;\leq \;1-\frac{\eta^4}{n^{39n}}.
\end{align*}
This contradicts $\eta=n^{10n}\varepsilon^{1/4}$, and hence we conclude \eqref{uconsitionBLRBLgsg}
in the case of the Brascamp-Lieb inequality.

\bigskip

Now we consider the {reverse} Brascamp-Lieb inequality; namely, we assume that \eqref{RBLgsg} holds. We observe that
if $i\in\{1,\ldots,k\}$, then
\begin{equation*}
0\leq\langle x,\tilde{u}_i\rangle\leq 0.15\mbox{ \ \ \ for
$x\in 0.1\sqrt{n+1}\,e+ 0.05B^n$,}
\end{equation*}
and define
$$
\widetilde{\Xi}:=0.1\sqrt{n+1}\,e+0.03\,\frac{\tilde{u}_1-\tilde{u}_{n+2}}{\|\tilde{u}_1-\tilde{u}_{n+2}\|}
+0.01B^n\subset 0.1\sqrt{n+1}\,e+ 0.05B^n.
$$
It follows using again \eqref{diff1n+2}, $\langle e,\tilde{u}_1-\tilde{u}_{n+2}\rangle=0$
and
 $V(B^n)>\frac1{n^{n/2}}$
 that
\begin{eqnarray}
\label{XiRBL1}
\langle y,\tilde{u}_1\rangle,\ldots,\langle y,\tilde{u}_k\rangle&\in&[0,0.15]
\mbox{ \ for $y\in \widetilde{\Xi}$}\\
\label{XiRBL2}
\langle y,\tilde{u}_1\rangle-\langle y,\tilde{u}_{n+2}\rangle&\geq&0.01\eta>2^{-7}\eta
\mbox{ \ for $y\in \widetilde{\Xi}$}\\
\label{XiRBL3}
V(\widetilde{\Xi})&=&0.01^nV(B^n)>\frac1{n^{8n}}.
\end{eqnarray}

In addition,
we consider the map $\Psi:\R^{n+1}\to\R^{n+1}$ with
$$
\Psi(y)=\sum_{i=1}^k\tilde{c}_i \psi_s(\langle \tilde{u}_i,y\rangle )\,\tilde{u}_i,
$$
which satisfies
$$
d\Psi(y)=\sum_{i=1}^k\tilde{c}_i\psi'_s(\langle \tilde{u}_i,y\rangle )\,\tilde{u}_i\otimes \tilde{u}_i.
$$
As we have seen, $d\Psi$ is positive definite and $\Psi:\R^{n+1}\to \R^{n+1}$ is
 injective (see \cite{Bar97,Bar98}).
Therefore, applying first \eqref{RBLgsg}, then the definition of $\Psi$  imply
\begin{align}
\nonumber
1+\varepsilon&\geq
	{\int^*_{\R^{n+1}}}\sup_{x=\sum_{i=1}^k\tilde{c}_i\theta_i\tilde{u}_i}
\prod_{i=1}^kg_s(\theta_i)^{\tilde{c}_i}\,dx\\
\nonumber
&\geq
{\int^*_{\R^{n+1}}}\left(\sup_{\Psi(y)=\sum_{i=1}^k\tilde{c}_i\theta_i\tilde{u}_i}\prod_{i=1}^k
g_s(\theta_i)^{\tilde{c}_i}\right)
\det\left( d\Psi(y)\right)\,dy\\
\label{RBLstabintermediate}
&\geq  \int_{\R^{n+1}}\left(\prod_{i=1}^kg_s(\psi_s(\langle \tilde{u}_i,y\rangle))^{\tilde{c}_i} \right)
\det\left(\sum_{i=1}^k\tilde{c}_i\psi'_s(\langle \tilde{u}_i,y\rangle )\,\tilde{u}_i\otimes \tilde{u}_i\right)\,dy.
\end{align}
Using  \eqref{BallBarthe}, for  $y\in\R^{n+1}$  we can bound the determinant in \eqref{RBLstabintermediate} from below by
\begin{equation}
\label{BallBartheRBL}
\det\left(\sum_{i=1}^k\tilde{c}_i\psi'_s(\langle y,\tilde{u}_i\rangle)\tilde{u}_i\otimes\tilde{u}_i\right)
\geq \prod_{i=1}^k\psi'_s(\langle y,\tilde{u}_i\rangle)^{\tilde{c}_i}.
\end{equation}
If $s\in [0,0.15]$ and $y\in \widetilde{\Xi}$, an application of Corollary~\ref{Ball-Barthe-stab} with
$\beta_0=  \eta^2/{n^{4n+6}}$,  Lemma~\ref{gstransporterror} (ii), \eqref{XiRBL1} and \eqref{XiRBL2}, allow us to improve \eqref{BallBartheRBL} to get
\begin{align*}
\det\left(\sum_{i=1}^k\tilde{c}_i\psi'_s(\langle y,\tilde{u}_i\rangle)\tilde{u}_i\otimes\tilde{u}_i\right)
&\geq \left(1+
\frac{\beta_0(\psi'_s(\langle y,\tilde{u}_1\rangle)-\psi'_s(\langle y,\tilde{u}_{n+2}\rangle))^2}
{4(\psi'_s(\langle y,\tilde{u}_1\rangle)+\psi'_s(\langle y,\tilde{u}_{n+2}\rangle))^2}\right)
\prod_{i=1}^k\psi'_s(\langle y,\tilde{u}_i\rangle)^{\tilde{c}_i} \\
&\geq\left(1+
\frac{\beta_0\left(0.07(\langle y,\tilde{u}_1\rangle-\langle y,\tilde{u}_{n+2}\rangle)\right)^2}
{4(2\cdot 0.77)^2}\right)
\prod_{i=1}^k\psi'_s(\langle y,\tilde{u}_i\rangle)^{\tilde{c}_i}\\
&\geq\left(1+\frac{\eta^40.07^2 2^{-14}}
{n^{4n+6}16\cdot 0.77^2}\right)
\prod_{i=1}^k\psi'_s(\langle y,\tilde{u}_i\rangle)^{\tilde{c}_i}\\
&\geq  \left(1+\frac{\eta^4}
{n^{4n+31}}\right)
\prod_{i=1}^k\psi'_s(\langle y,\tilde{u}_i\rangle)^{\tilde{c}_i}.
\end{align*}
Moreover, if $s\in [0,0.15]$ and $y\in \widetilde{\Xi}$ we deduce from  Lemma~\ref{gstransporterror} (ii) and \eqref{cisum} that
$$
\prod_{i=1}^k\psi'_s(\langle y,\tilde{u}_i\rangle)^{\tilde{c}_i}\geq
\prod_{i=1}^k 0.49^{\tilde{c}_i}=0.49^{n+1}>n^{-2n}.
$$
Thus if $s\in [0,0.15]$ and $y\in \widetilde{\Xi}$, then
\begin{equation}
\label{XiprodRBL}
\det\left(\sum_{i=1}^k\tilde{c}_i\psi'_s(\langle y,\tilde{u}_i\rangle)\tilde{u}_i\otimes\tilde{u}_i\right)
\geq \prod_{i=1}^k\psi'_s(\langle y,\tilde{u}_i\rangle)^{\tilde{c}_i}
+\frac{\eta^4}{n^{6n+31}}.
\end{equation}
Further in  \eqref{RBLstabintermediate}, if $s\in [0,0.15]$ and $y\in \widetilde{\Xi}$, then \eqref{XiRBL1},
Lemma~\ref{gstransporterror} (ii), \eqref{cisum} and \eqref{eqeffref1} imply
\begin{equation}
\label{XiprodRBL0}
\prod_{i=1}^kg_s(\psi_s(\langle \tilde{u}_i,y\rangle))^{\tilde{c}_i} \geq
\left(\frac{1.3}{\sqrt{2\pi}}\right)^{n+1} \ge 2^{-n-1}
\geq n^{-n-1}.
\end{equation}
Applying first \eqref{BallBartheRBL}, \eqref{XiprodRBL} and \eqref{XiprodRBL0}
in  \eqref{RBLstabintermediate}, and then
\eqref{masstrans} and \eqref{XiRBL3}, we deduce that if $s\in [0,0.15]$, then
\begin{align*}
\nonumber
1+\varepsilon
&\geq \int_{\R^{n+1}}\left(\prod_{i=1}^kg_s(\psi_s(\langle \tilde{u}_i,y\rangle))^{\tilde{c}_i}\right)
\left(\prod_{i=1}^k\psi'_s(\langle \tilde{u}_i,y\rangle)^{\tilde{c}_i}\right)\,dy
+\int_{\widetilde{\Xi}}\frac{\eta^4}{n^{7n+32}}\,dy\\
\nonumber
&\geq \int_{\R^{n+1}}\left(\prod_{i=1}^kg(\langle \tilde{u}_i,y\rangle)^{\tilde{c}_i}\right)\,dy+
\frac{\eta^4}{n^{15n+31}}\\
&\geq
\left(\frac{1}{\sqrt{2\pi}}\right)^{\frac{n+1}2}\int_{\R^{n+1}}e^{- \|y\|^2/2}\,dy+
\frac{\eta^4}{n^{31n}}=1+
\frac{\eta^4}{n^{31n}}.
\end{align*}
This contradicts $\eta=n^{10n}\varepsilon^{1/4}$, and hence we conclude \eqref{uconsitionBLRBLgsg}
also in the case of the reverse Brascamp-Lieb inequality.

\medskip

Now we return to the proof of Proposition~\ref{BLRBLgsg}. Since $k\leq 2n^2$ and $\varepsilon<n^{-56n}$,
we have $\eta<1/( 6kn)$.
Since \eqref{uconsitionBLRBLgsg} is available now, we can apply  Corollary~\ref{approxort}, which
yields the existence of an orthonormal basis $\tilde{w}_1,\ldots,\tilde{w}_{n+1}$ of $\R^{n+1}$
such that $\langle e,\tilde{w}_i\rangle=\frac1{\sqrt{n+1}}$  and
$\|\tilde{u}_i-\tilde{w}_i\|\leq \angle(\tilde{u}_i,\tilde{w}_i)<6n^3\eta$ for $\eta=n^{10n}\varepsilon^{1/4}$ and $i=1,\ldots,n+1$.
Now we consider the vertices $w_1,\ldots,w_{n+1}\in S^{n-1}$ of the regular simplex which are defined by the relations
$\tilde{w}_i=\sqrt{\frac{n}{n+1}}\,w_i +\sqrt{\frac{1}{n+1}}\,e$ for $i=1,\ldots,n+1$.
Therefore
$$\angle(u_i,w_i)<\frac{\pi}{2}\|u_i-w_i\|=
\frac{\pi}{2}\,\sqrt{\frac{n+1}{n}}\,\|\tilde{u}_i-\tilde{w}_i\|<18n^3\eta<n^{14n}\varepsilon^{1/4}
$$
for $i=1,\ldots,n+1$.
In turn, we conclude Proposition~\ref{BLRBLgsg}.
\hfill \proofbox
\end{proof}

\medskip

We will actually use the Brascamp-Lieb inequality and its reverse for the function
\begin{equation}\label{gtilde}
\tilde{g}_s(t)=\mathbf{1}\{{t\geq 0}\}\exp\left( -\frac{(t-s)^2}2\right)
\end{equation}
for $s\in \R$,
where
$$
\tilde{g}_s=\left(\int_\R \tilde{g}_s\right)\, g_s.
$$
We note that if $s\geq 0$, then
\begin{equation}
\label{flambdaint}
\int_{\R}\tilde{g}_s\geq \frac{\sqrt{2\pi}}2>1.
\end{equation}
From Proposition~\ref{BLRBLgsg} and \eqref{cisum} we deduce the following strengthened version of the Brascamp-Lieb inequality and its reverse for $\tilde{g}_s$.

\begin{coro}
\label{BLRBLflambda}
Using the same notation as in Proposition~\ref{BLRBLgsg},
 let $k\leq 2n^2$, let $s\in[0,0.15]$
and let $\varrho\in(0,1)$.

If for any regular simplex with vertices $w_1,\ldots,w_{n+1}\in S^{n-1}$
and any subset $\{i_1,\ldots,i_{n+1}\}\subset\{1,\ldots,k\}$, there exists $j\in\{1,\ldots,n+1\}$ such that
$\angle(u_{i_j},w_j)\geq \varrho$, then
\begin{align*}
\int_{\R^{n+1}}\prod_{i=1}^k\tilde{g}_s(\langle x,\tilde{u}_i\rangle)^{\tilde{c}_i}\,dx&\leq
(1-n^{-56n}\varrho^4)\left(\int_\R \tilde{g}_s\right)^{n+1}, \\
{\int^*_{\R^{n+1}}}\sup_{x=\sum_{i=1}^k\tilde{c}_i\theta_i\tilde{u}_i}
\prod_{i=1}^k\tilde{g}_s(\theta_i)^{\tilde{c}_i}\,dx
&\geq  (1+n^{-56n}\varrho^4)\left(\int_\R \tilde{g}_s\right)^{n+1}.
\end{align*}
\end{coro}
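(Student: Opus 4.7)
The plan is to deduce Corollary~\ref{BLRBLflambda} directly from Proposition~\ref{BLRBLgsg} via the scaling identity $\tilde g_s = C_s\,g_s$ with $C_s := \int_\R \tilde g_s$. Since $g_s$ is a probability density and $\tilde g_s$ differs from it only by this constant factor, both sides of the Brascamp--Lieb and the reverse Brascamp--Lieb integrals for $\tilde g_s$ equal those for $g_s$ multiplied by $C_s^{n+1}$, and the strict inequalities of the corollary then follow from the contrapositive of the proposition.

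First, I would verify the scaling step. By property (iv) of Section~\ref{secbrascamp-lieb}, applied in $\R^{n+1}$ to the system $\{\tilde u_i,\tilde c_i\}$, we have $\sum_{i=1}^k \tilde c_i = n+1$, hence $\prod_{i=1}^k C_s^{\tilde c_i} = C_s^{n+1}$. Pulling this constant factor out of both the Brascamp--Lieb integrand and the pointwise supremum appearing in the reverse inequality gives
\begin{align*}
\int_{\R^{n+1}}\prod_{i=1}^k \tilde g_s(\langle x,\tilde u_i\rangle)^{\tilde c_i}\,dx
&= C_s^{n+1}\int_{\R^{n+1}}\prod_{i=1}^k g_s(\langle x,\tilde u_i\rangle)^{\tilde c_i}\,dx, \\
{\int^*_{\R^{n+1}}}\sup_{x=\sum_i\tilde c_i\theta_i\tilde u_i}\prod_{i=1}^k \tilde g_s(\theta_i)^{\tilde c_i}\,dx
&= C_s^{n+1}{\int^*_{\R^{n+1}}}\sup_{x=\sum_i\tilde c_i\theta_i\tilde u_i}\prod_{i=1}^k g_s(\theta_i)^{\tilde c_i}\,dx.
\end{align*}

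Next, I would set $\varepsilon := n^{-56n}\varrho^4$, so that $n^{14n}\varepsilon^{1/4} = \varrho$, and $\varepsilon \in (0,n^{-56n})$ because $\varrho \in (0,1)$. Thus Proposition~\ref{BLRBLgsg} is applicable with this $\varepsilon$, and the hypothesis of the corollary is precisely the negation of the conclusion of the proposition for this $\varepsilon$. By contrapositive, neither (\ref{BLgsg}) nor (\ref{RBLgsg}) can hold, i.e.,
$$
\int_{\R^{n+1}}\prod_{i=1}^k g_s(\langle x,\tilde u_i\rangle)^{\tilde c_i}\,dx < 1 - n^{-56n}\varrho^4
$$
and
$$
{\int^*_{\R^{n+1}}}\sup_{x=\sum_i\tilde c_i\theta_i\tilde u_i}\prod_{i=1}^k g_s(\theta_i)^{\tilde c_i}\,dx > 1 + n^{-56n}\varrho^4.
$$
Multiplying these two inequalities by $C_s^{n+1}$ and invoking the scaling identity from the previous paragraph together with $C_s = \int_\R \tilde g_s$ yields the two desired bounds.

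The main obstacle is essentially bookkeeping, since all of the analytic work --- the stability version of the Ball--Barthe inequality from Corollary~\ref{Ball-Barthe-stab}, the transport estimates for the truncated Gaussian in Lemma~\ref{gstransporterror}, and the extraction of the final angle bound via Corollary~\ref{approxort} --- has already been carried out inside Proposition~\ref{BLRBLgsg}. The one subtlety worth noting is that the single geometric conclusion of Proposition~\ref{BLRBLgsg} is used to negate (\ref{BLgsg}) and (\ref{RBLgsg}) simultaneously: since that conclusion fails under the hypothesis of the corollary for $\varepsilon=n^{-56n}\varrho^4$, each of the two separate hypotheses of the proposition must fail as well, yielding both strict inequalities at once.
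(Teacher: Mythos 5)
Your proposal is correct and follows exactly the route the paper intends: the paper states that Corollary~\ref{BLRBLflambda} is deduced from Proposition~\ref{BLRBLgsg} together with \eqref{cisum}, which is precisely your combination of the scaling identity $\tilde g_s=\bigl(\int_\R\tilde g_s\bigr)g_s$ with $\sum_i\tilde c_i=n+1$ and the contrapositive of the proposition applied with $\varepsilon=n^{-56n}\varrho^4$. The bookkeeping (range of $\varepsilon$, matching of the angle threshold $n^{14n}\varepsilon^{1/4}=\varrho$, and negating \eqref{BLgsg} and \eqref{RBLgsg} simultaneously) is all in order.
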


\section{An almost regular simplex for Theorem~\ref{Lowner-stab}
and Theorem~\ref{meanw-isotropicstab} (a), (b)}\label{sec:7}

The entire section is devoted to proving the following statement.

\begin{prop}
\label{uiwjdist}
Let  $n+1\leq k\leq 2n^2$,  $u_1,\ldots,u_k\in S^{n-1}$ and $c_1,\ldots,c_k>0$ be such that
$$
\begin{array}{rcl}
\sum_{i=1}^kc_iu_i\otimes u_i&=&\Id_n,\\[1ex]
\sum_{i=1}^kc_iu_i&=&o,
\end{array}
$$
and $\ell(C)\geq (1-\varepsilon)\ell(\Delta_n)$ holds for $C={\rm conv}\{u_1,\ldots,u_k\}$
and $\varepsilon\in(0,n^{-60n})$.

Then for $\eta=n^{15n}\varepsilon^{\frac14}\in (0,1)$, there exists a regular simplex with vertices
$w_1,\ldots,w_{n+1}\in S^{n-1}$ and $\{i_1,\ldots,i_{n+1}\}\subset\{1,\ldots,k\}$ such that
$$
\angle(u_{i_j},w_j)\leq \eta\mbox{ \ \ for $j=1,\ldots,n+1$}.
$$
\end{prop}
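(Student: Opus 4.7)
The plan is to reverse-engineer Barthe's proof of the inequality $\ell(C)\le\ell(\Delta_n)$ from Theorem~\ref{meanw-isotropic} and extract a quantitative version via the stability of the reverse Brascamp--Lieb inequality proved in Corollary~\ref{BLRBLflambda}. Barthe's argument lifts the centered isotropic data $(u_i,c_i)$ to the data $(\tilde{u}_i,\tilde{c}_i)$ in $\R^{n+1}$ from Section~\ref{sec6} and realizes $\ell(C)\le\ell(\Delta_n)$ as an integrated reverse Brascamp--Lieb inequality applied to the one-parameter family of truncated Gaussians $\tilde{g}_s$, $s\ge 0$. Writing
$$L(s):={\int^*_{\R^{n+1}}}\sup_{x=\sum_i\tilde{c}_i\theta_i\tilde{u}_i}\prod_{i=1}^k\tilde{g}_s(\theta_i)^{\tilde{c}_i}\,dx,\qquad R(s):=\Bigl(\int_\R\tilde{g}_s\Bigr)^{n+1},$$
the reverse Brascamp--Lieb inequality gives $L(s)\ge R(s)$, with equality precisely when the $\tilde{u}_i$ form an orthonormal basis (i.e., the regular simplex case, where Fubini collapses $L(s)$ to $R(s)$). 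One then identifies a non-negative weight $\omega$ on $[0,\infty)$ and a positive factor $\kappa(n)$, both at least polynomial-in-$n^{-1}$ in size on the range of interest, such that
$$\ell(\Delta_n)-\ell(C)=\kappa(n)\int_0^\infty\bigl(L(s)-R(s)\bigr)\,\omega(s)\,ds.$$

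Next, assume for contradiction that no subset $\{u_{i_1},\ldots,u_{i_{n+1}}\}$ of $\{u_1,\ldots,u_k\}$ lies within angle $\eta:=n^{15n}\varepsilon^{1/4}\in(0,1)$ of the vertices of any regular simplex on $S^{n-1}$. Then the hypothesis of Corollary~\ref{BLRBLflambda} is met with $\varrho=\eta$ (the assumption $\varepsilon<n^{-60n}$ secures $\eta<1$ and the smallness conditions built into that corollary). For every $s\in[0,0.15]$ we obtain
$$L(s)-R(s)\ge n^{-56n}\eta^4\,R(s)\ge n^{-56n}\eta^4\,\bigl(\tfrac{\sqrt{2\pi}}{2}\bigr)^{n+1}$$
using the lower bound \eqref{flambdaint}. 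Integrating against $\omega$ on $[0,0.15]$ and multiplying by $\kappa(n)$ yields $\ell(\Delta_n)-\ell(C)\ge n^{-A(n)}\eta^4$ for a polynomial $A(n)$ dominated by $58n$ or so. On the other hand, the hypothesis combined with Lemma~\ref{lemrough}(a) gives $\ell(\Delta_n)-\ell(C)\le\varepsilon\,\ell(\Delta_n)\le n^{3/2}\varepsilon$. Substituting $\eta^4=n^{60n}\varepsilon$ into the lower bound turns the two inequalities into $n^{60n-A(n)}\varepsilon\le n^{3/2}\varepsilon$, which fails by a wide margin thanks to the generous exponent $15n$ built into $\eta$, and the contradiction proves the proposition.

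The principal technical obstacle is the first paragraph: exhibiting the integral identity for the deficit $\ell(\Delta_n)-\ell(C)$ with quantitative polynomial-in-$n$ constants $\kappa(n)$ and $\omega$. This is achieved by parametrizing the positive cone $\{\sum_i\tilde{c}_i\theta_i\tilde{u}_i:\theta_i\ge 0\}\subset\R^{n+1}$ as $\{(y,T)\in\R^n\times[0,\infty):y\in T\sqrt{n}\,C\}$, evaluating
$$\sum_i\tilde{c}_i(\theta_i-s)^2=\|x\|^2-2s\sqrt{n+1}\,T+s^2(n+1)$$
via the identities $\sum_i\tilde{c}_i\tilde{u}_i\otimes\tilde{u}_i=\Id_{n+1}$, $\sum_i\tilde{c}_i\tilde{u}_i=\sqrt{n+1}\,e$ and $\sum_i\tilde{c}_i=n+1$, performing a Gaussian change of variables, and applying Fubini in $s$ to recover the representation $\ell(C)=\int_0^\infty(1-\gamma_n(tC))\,dt$ from \eqref{ellGaussianint}. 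Careful bookkeeping of the polynomial-in-$n$ factors arising from these manipulations, combined with the explicit $n^{-56n}\eta^4$ gain from Corollary~\ref{BLRBLflambda}, produces the quantitative bound and hence the proposition.
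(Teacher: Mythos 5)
Your overall strategy is the paper's own: lift $(u_i,c_i)$ to $(\tilde u_i,\tilde c_i)$ in $\R^{n+1}$, feed the indirect assumption into Corollary~\ref{BLRBLflambda} with $\varrho=\eta$ to get a uniform gain $n^{-56n}\eta^4$ in the reverse Brascamp--Lieb inequality for the truncated Gaussians $\tilde g_s$, $s\in[0,0.15]$, convert the cone integrals into weighted integrals of $\gamma_n(tC)$ and $\gamma_n(t\Delta_n)$, integrate over the shift parameter, and compare with $\ell(C)\ge(1-\varepsilon)\ell(\Delta_n)$ via \eqref{ellGaussianint} and Lemma~\ref{lemrough}(a); Section~\ref{sec:7} does exactly this with $\tau=s\sqrt{n(n+1)}$, $t=r\sqrt n$, the weight coming from $\int_\R e^{-(t-\tau)^2/(2n)}\,d\tau=\sqrt{2\pi n}$.

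The one genuine flaw is your claimed exact identity $\ell(\Delta_n)-\ell(C)=\kappa(n)\int_0^\infty(L(s)-R(s))\,\omega(s)\,ds$ and, underlying it, the asserted evaluation $\sum_i\tilde c_i(\theta_i-s)^2=\|x\|^2-2s\sqrt{n+1}\,T+s^2(n+1)$ for $x=\sum_i\tilde c_i\theta_i\tilde u_i$. Isotropy and centeredness give $\sum_i\tilde c_i\theta_i=\sqrt{n+1}\,\langle x,e\rangle$, but they do not give $\sum_i\tilde c_i\theta_i^2=\|x\|^2$; only Ball's inequality \eqref{BLRBLquad}, namely $\|x\|^2\le\sum_i\tilde c_i\theta_i^2$, is available, with equality in general only when $k=n+1$ and the $\tilde u_i$ are orthonormal (the simplex case). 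Consequently $L(s)$ is only bounded above by the weighted cone integral $(2\pi)^{n/2}e^{-(n+1)s^2/2}\int_0^\infty e^{-r^2/2+sr\sqrt{n+1}}\gamma_n(r\sqrt n\,C)\,dr$, so your formula must be weakened to an inequality; this is precisely the content of Lemma~\ref{exponentialintestgs}, which states (i) as an inequality for $C$ and (ii) as an equality only for $\Delta_n$. Fortunately the inequality points in the direction you need: it yields $\ell(\Delta_n)-\ell(C)\ge\kappa(n)\int(L-R)\,\omega$, where $L-R\ge0$ everywhere by \eqref{RBLf} and $L-R\ge n^{-56n}\eta^4$ on $[0,0.15]$ by \eqref{flambdaint} together with the indirect assumption, so once you replace the equality by this inequality your bookkeeping and the final contradiction with $\eta^4=n^{60n}\varepsilon$ go through exactly as in the paper.
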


We fix $e\in S^n\subset\R^{n+1}$, and
identify $e^\bot\subset \R^{n+1}$ with $\R^n$.
As   before Proposition~\ref{BLRBLgsg}, for each $u_i$, we consider
$$
\begin{array}{rcl}
\tilde{u}_i&=&\frac{\sqrt{n}}{\sqrt{n+1}}\,u_i+\frac{1}{\sqrt{n+1}}\,e\in S^n,\\[1ex]
\tilde{c}_i&=&\frac{n+1}n\,c_i,
\end{array}
$$
and hence
\begin{align}
\label{sumtildeuitensor}
\sum_{i=1}^k\tilde{c}_i\,\tilde{u}_i\otimes \tilde{u}_i&=\Id_{n+1},\\
\label{sumtildeci}
\sum_{i=1}^k\tilde{c}_i&=n+1.
\end{align}

Indirectly, we assume that Proposition~\ref{uiwjdist} does not hold, and we aim at a contradiction.
We deduce from Corollary~\ref{BLRBLflambda} and \eqref{flambdaint}
 that if $s\in[0,0.15]$, then
\begin{equation}
\label{indirect-RBLstab}
{\int^*_{\R^{n+1}}}\sup_{y=\sum_{i=1}^k\tilde{c}_i\theta_i\tilde{u}_i}
\prod_{i=1}^k\tilde{g}_s(\theta_i)^{\tilde{c}_i}\,dy
\geq  (1+n^{-56n}\eta^4)\left(\int_\R \tilde{g}_s\right)^{n+1}
>\left(\int_\R \tilde{g}_s\right)^{n+1}+n^{-56n}\eta^4.
\end{equation}

Next we provide the following general auxiliary result (which holds independently of the indirect assumption).

\begin{lemma}
\label{exponentialintestgs}
If $s\in\R$ and $C$ is defined as above, then
\begin{enumerate}
\item[{\rm (i)}]
$\displaystyle (2\pi)^{\frac{n}2}e^{-\frac{(n+1)s^2}2}\int_0^\infty e^{-\frac{r^2}2+sr\sqrt{n+1}}
\gamma_n(r\sqrt{n}C)\,dr
\geq{\int^*_{\R^{n+1}}}
\sup_{y=\sum_{i=1}^k\tilde{c}_i\theta_i\tilde{u}_i}\;
\prod_{i=1}^k\tilde{g}_s(\theta_i)^{\tilde{c}_i}\,dy, $
\item[{\rm (ii)}]
$\displaystyle
(2\pi)^{\frac{n}2}e^{-\frac{(n+1)s^2}2}\int_0^\infty e^{-\frac{r^2}2+sr\sqrt{n+1}}
\gamma_n(r\sqrt{n}\Delta_n)\,dr
=\left(\int_\R \tilde{g}_s\right)^{n+1}$.
\end{enumerate}
\end{lemma}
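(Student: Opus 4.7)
My plan is to treat both statements via a single change of variables $y = re + z$ with $r = \langle y, e\rangle$ and $z \in e^\perp \cong \R^n$, for which $dy = dr\,dz$ and $\|y\|^2 = r^2 + \|z\|^2$.

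For (i), I would first unpack the pointwise integrand. Since $\tilde{g}_s(\theta) = \mathbf{1}\{\theta\ge 0\}\exp(-(\theta-s)^2/2)$, the product $\prod_i \tilde{g}_s(\theta_i)^{\tilde{c}_i}$ equals $\exp(-\tfrac{1}{2}\sum_i \tilde{c}_i(\theta_i-s)^2)$ on the positive orthant and vanishes elsewhere. Pairing the constraint $y = \sum_i \tilde{c}_i \theta_i \tilde{u}_i$ with $e$ and using $\langle \tilde{u}_i, e\rangle = 1/\sqrt{n+1}$ together with \eqref{sumtildeci} gives $\sum_i \tilde{c}_i \theta_i = r\sqrt{n+1}$, so expanding yields
\begin{equation*}
\tfrac{1}{2}\sum_i \tilde{c}_i(\theta_i - s)^2 = \tfrac{1}{2}\sum_i \tilde{c}_i \theta_i^2 - sr\sqrt{n+1} + \tfrac{(n+1)s^2}{2}.
\end{equation*}
Maximizing the integrand therefore amounts to minimizing $\sum_i \tilde{c}_i \theta_i^2$ subject to the constraints; Ball's inequality \eqref{BLRBLquad}, applied to $(\tilde{c}_i, \tilde{u}_i)$ in $\R^{n+1}$ via \eqref{sumtildeuitensor}, gives $\sum_i \tilde{c}_i \theta_i^2 \ge \|y\|^2$ for every feasible $\theta$-tuple. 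Writing $\lambda_i := \tilde{c}_i\theta_i / (r\sqrt{n+1})$ for $r > 0$ shows that the feasibility set with $\theta_i \ge 0$ is non-empty iff $z = r\sqrt{n}\sum_i \lambda_i u_i$ with $\lambda_i \ge 0$ summing to $1$, that is, iff $z \in r\sqrt{n}\,C$. Combining these two observations,
\begin{equation*}
\sup_{y=\sum_i \tilde{c}_i\theta_i\tilde{u}_i}\prod_{i=1}^k\tilde{g}_s(\theta_i)^{\tilde{c}_i}
\le \mathbf{1}\{r>0,\,z\in r\sqrt{n}\,C\}\exp\!\left(-\tfrac{r^2+\|z\|^2}{2} + sr\sqrt{n+1} - \tfrac{(n+1)s^2}{2}\right).
\end{equation*}
Integrating with $dy = dr\,dz$ and recognizing $\int_{r\sqrt{n}C}e^{-\|z\|^2/2}\,dz = (2\pi)^{n/2}\gamma_n(r\sqrt{n}C)$ yields (i); the half-space $r\le 0$ contributes a null set since the integrand vanishes there.

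For (ii), I would specialize the argument to the regular simplex measure, where $k = n+1$, $c_i = n/(n+1)$, $\tilde{c}_i = 1$, and $\{\tilde{u}_1,\ldots,\tilde{u}_{n+1}\}$ is an orthonormal basis of $\R^{n+1}$ (being unit vectors with $\sum_i \tilde{u}_i \otimes \tilde{u}_i = \Id_{n+1}$). In this case $C = \Delta_n$, and the constraint $y = \sum_i \theta_i \tilde{u}_i$ has the unique solution $\theta_i = \langle y, \tilde{u}_i\rangle$ with $\sum_i \theta_i^2 = \|y\|^2$, so every inequality in the proof of (i) becomes an equality. On the other hand, the same orthonormal change of variables evaluates the sup-integral directly as $\int_{\R^{n+1}}\prod_i \tilde{g}_s(\langle y, \tilde{u}_i\rangle)\,dy = (\int_\R \tilde{g}_s)^{n+1}$; equating the two evaluations of the same quantity gives (ii). The main obstacle is less technical than combinatorial—correctly identifying the positivity region as $\{r>0,\,z\in r\sqrt{n}\,C\}$ and matching the quadratic exponents—since no ingredient beyond Ball's inequality \eqref{BLRBLquad} and the Gaussian marginal on $e^\perp$ is needed.
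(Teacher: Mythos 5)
Your proof is correct and follows essentially the same route as the paper: the same decomposition $y=re+z$ over the cone whose horizontal slices are $r\sqrt{n}\,C$, the same use of Ball's inequality \eqref{BLRBLquad} together with \eqref{sumtildeuitensor} and \eqref{sumtildeci} to bound the supremum, and the same orthonormal-basis/Fubini evaluation for (ii). The only difference is presentational (you phrase the estimate as a pointwise bound on the sup-integrand rather than first restricting the upper integral to the cone), which changes nothing of substance.
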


\begin{proof} We consider the convex cone
\begin{align*}
{\cal C}_0:&=\left\{\sum_{i=1}^k\xi_i\tilde{u}_i:\,\xi_i\geq 0\mbox{ for }i=1,\ldots,k\right\}\\
&=\left\{\sum_{i=1}^kr\sqrt{n}\lambda_iu_i+re:r\ge 0,\lambda_i\in [0,1],\sum_{i=1}^k\lambda_i=1\right\}.
\end{align*}
Then we clearly have
$x+re\in{\cal C}_0$  for $x\in\R^n$ and $r\in\R$ if and only if $r\geq 0$
and $x\in r\sqrt{n}C$.
If $y=\sum_{i=1}^k\tilde{c}_i\theta_i\tilde{u}_i$ for $\theta_1,\ldots,\theta_k\geq 0$, then
$\langle y,e\rangle=(\sum_{i=1}^k\tilde{c}_i\theta_i)/\sqrt{n+1}$, and hence
we deduce from \eqref{BLRBLquad}, \eqref{sumtildeuitensor} and \eqref{sumtildeci} that
\begin{align*}
{\int^*_{\R^{n+1}}}
\sup_{y=\sum_{i=1}^k\tilde{c}_i\theta_i\tilde{u}_i}\;
\prod_{i=1}^k\tilde{g}_s(\theta_i)^{\tilde{c}_i}\,dy&=
{\int^*_{{\cal C}_0}}
\sup_{y=\sum_{i=1}^k\tilde{c}_i\theta_i\tilde{u}_i
,  \theta_i\geq 0}\;
e^{-\frac12\sum_{i=1}^k\tilde{c}_i\theta_i^2+s\sum_{i=1}^k\tilde{c}_i\theta_i-
\frac{s^2}2\sum_{i=1}^k\tilde{c}_i}\,dy\\
&\leq e^{-\frac{(n+1)s^2}2}\int_{{\cal C}_0}
e^{-\frac12\|y\|^2+s\langle y,e\rangle\sqrt{n+1}}\,dy\\
&= e^{-\frac{(n+1)s^2}2}\int_0^\infty\int_{r\sqrt{n}C}
e^{-\frac12(\|x\|^2+r^2)+sr\sqrt{n+1}}\,dx\,dr\\
&=  (2\pi)^{\frac{n}2}e^{-\frac{(n+1)s^2}2}\int_0^\infty e^{-\frac{r^2}2+sr\sqrt{n+1}}
\gamma_n(r\sqrt{n}C)\,dr,
\end{align*}
thus we have obtained (i).

For (ii), let $w_1,\ldots,w_{n+1}$ be the vertices of $\Delta_n$, and let
$$
\tilde{w}_i=\frac{\sqrt{n}}{\sqrt{n+1}}\,w_i+\frac{1}{\sqrt{n+1}}\,e
$$
for $i=1,\ldots,n+1$.
Then $\tilde{w}_1,\ldots\tilde{w}_{n+1}$ form an orthonormal basis of
$\R^{n+1}$, and hence
$$
\sum_{i=1}^{n+1}\tilde{w}_i\otimes\tilde{w}_i=\Id_{n+1}
$$
(with $\tilde c_i=1$ in this case).
Moreover for any $y\in \R^{n+1}$, there exist unique $\theta_1,\ldots,\theta_{n+1}\in\R$
satifying $y=\sum_{i=1}^{n+1}\theta_i\tilde{w}_i$, in fact, we have $\theta_i=\langle y,\tilde{w}_i\rangle$
and  $\sum_{i=1}^{n+1}\theta_i^2=\|y\|^2$ for $i=1,\ldots,n+1$.
By the preceding argument, we deduce
\begin{align*}
(2\pi)^{\frac{n}2}e^{-\frac{(n+1)s^2}2}\int_0^\infty e^{-\frac{r^2}2+sr\sqrt{n+1}}
\gamma_n(r\sqrt{n}\Delta_n)\,dr&=
{\int^*_{\R^{n+1}}}
\sup_{y=\sum_{i=1}^k\theta_i\tilde{w}_i}\;
\prod_{i=1}^{n+1}\tilde{g}_s(\theta_i)\,dy\\
&=\left(\int_\R \tilde{g}_s\right)^{n+1},
\end{align*}
where we used Fubini's theorem for the second equality.
\hfill \proofbox
\end{proof}

\bigskip 

We apply the change of parameter $\tau=s\sqrt{n(n+1)}$
and substitution $t=r\sqrt{n}$ in Lemma~\ref{exponentialintestgs}, and conclude with the help
of the {reverse} Brascamp-Lieb inequality \eqref{RBLf} that if $\tau\in\R$, then
\begin{align*}
\nonumber
\int_0^\infty e^{-\frac{1}{2n}(t-\tau)^2}\,\gamma_n(tC)\,dt
&=e^{\frac{-s^2(n+1)}{2}}\sqrt{n}\int_0^\infty e^{-\frac{r^2}2+sr\sqrt{n+1}}
\gamma_n(r\sqrt{n}C)\,dr\\
\nonumber
&\geq\frac{\sqrt{n}}{(2\pi)^{\frac{n}2}}
{\int^*_{\R^{n+1}}}
\sup_{y=\sum_{i=1}^k\tilde{c}_i\theta_i\tilde{u}_i}
\prod_{i=1}^k\tilde{g}_s(\theta_i)^{\tilde{c}_i}\,dy\\
\nonumber
&\geq\frac{\sqrt{n}}{(2\pi)^{\frac{n}2}}
\left(\int_\R \tilde{g}_s\right)^{n+1}\\
\noindent
&=\int_0^\infty e^{-\frac{1}{2n}(t-\tau)^2}\,\gamma_n(t\Delta_n)\,dt.
\end{align*}
Hence, we get
\begin{equation}\label{intlambdatau}
\int_0^\infty e^{-\frac{1}{2n}(t-\tau)^2}\,(1-\gamma_n(t\Delta_n))\,dt
\ge
\int_0^\infty e^{-\frac{1}{2n}(t-\tau)^2}\,(1-\gamma_n(tC))\,dt.
\end{equation}
In addition, if $\tau\in[0,0.15n]\subset[0,0.15\sqrt{n(n+1)}]$, so that
$s=\tau/\sqrt{n(n+1)}\in[0,0.15)$, then
using \eqref{indirect-RBLstab}, instead of the {reverse} Brascamp-Lieb inequality \eqref{RBLf},
 we obtain
$$
\int_0^\infty e^{-\frac{1}{2n}(t-\tau)^2}\,\gamma_n(t C)\,dt \geq
  \int_0^\infty e^{-\frac{1}{2n}(t-\tau)^2}\,\gamma_n(t\Delta_n)\,dt
+\frac{\sqrt{n}}{(2\pi)^{\frac{n}2}}\, n^{-56n}\eta^4,
$$
and therefore
\begin{equation}\label{intlambdataustab}
\int_0^\infty e^{-\frac{1}{2n}(t-\tau)^2}\, (1-\gamma_n(t\Delta_n))\,dt
\ge
\int_0^\infty e^{-\frac{1}{2n}(t-\tau)^2}\,(1-\gamma_n(tC))\,dt+\frac{\sqrt{n}}{(2\pi)^{\frac{n}2}}\, n^{-56n}\eta^4.
\end{equation}
Integrating \eqref{intlambdatau} for $\tau\in\R\setminus [0,0.15n]$
and \eqref{intlambdataustab} for $\tau\in[0,0.15n]$, we deduce that
\begin{align}
\nonumber
&\int_{-\infty}^\infty\int_0^\infty e^{-\frac{1}{2n}(t-\tau)^2}\,(1-\gamma_n(t \Delta_n))\,dt\,d\tau\nonumber\\
&\qquad\qquad \geq
\int_{-\infty}^\infty\int_0^\infty e^{-\frac{1}{2n}(t-\tau)^2}\,(1-\gamma_n(t C))\,dt\,d\tau
+0.15n\, \frac{\sqrt{n}}{(2\pi)^{\frac{n}2}}\, n^{-56n}\eta^4
\label{inttaustab}
\end{align}
Since for any $t\in\R$, we have
$$
\int_{-\infty}^\infty e^{-\frac{1}{2n}(t-\tau)^2}\,d\tau=\sqrt{2\pi n}
$$
we deduce from \eqref{ellGaussianint} and \eqref{inttaustab} that
\begin{align}
\nonumber
\ell(C)&=\int_0^\infty(1-\gamma_n(tC)\,dt\\
\nonumber
&=\frac1{\sqrt{2\pi n}}\int_{-\infty}^\infty\int_0^\infty e^{-\frac{1}{2n}(t-\tau)^2}\,(1-\gamma_n(tC))\,dt\,d\tau\\
\nonumber
&\leq
\frac1{\sqrt{2\pi n}}\int_{-\infty}^\infty\int_0^\infty e^{-\frac{1}{2n}(t-\tau)^2}\,(1-\gamma_n(t
\Delta_n))\,dt\,d\tau
-\frac{0.15n}{(2\pi)^{\frac{n+1}2}}\, n^{-56n}\eta^4
\\
\label{intellstab1}
&= \ell(\Delta_n)- \frac{0.15n}{(2\pi)^{\frac{n+1}2}}\, n^{-56n}\eta^4.
\end{align}
Hence, Lemma \ref{lemrough} (a), \eqref{intellstab1}  and the hypothesis yield
$$
(1-\varepsilon)\ell(\Delta_n)\leq   \ell(C)< (1-n^{-60n}\eta^4)\ell(\Delta_n).
$$
This contradicts $\eta=n^{15n}\varepsilon^{\frac14}$, and in turn implies
Proposition~\ref{uiwjdist}.

\section{Proof of Theorem~\ref{Lowner-stab} and of Theorem~\ref{meanw-isotropicstab} (a), (b)}\label{sec:8}

For Theorem~\ref{meanw-isotropicstab}, let $\mu$ be a  centered isotropic measure  on $S^{n-1}$,
and let $K:=Z_\infty(\mu)$, and hence ${\rm supp}\,\mu=\partial K\cap S^{n-1}$.
In particular, under the assumptions of Theorem~\ref{Lowner-stab} and of Theorem~\ref{meanw-isotropicstab} (a),
we have $\ell(K)\geq (1-\varepsilon)\ell(\Delta_n )$.
First, we assume that
$$
0<\varepsilon<n^{-100n}.
$$

It follows from Lemma~\ref{discrete-iso} and John's theorem that there exist
 $k\geq n+1$ with $k\leq 2n^2$,  $u_1,\ldots,u_k\in \partial K\cap S^{n-1}$ and $c_1,\ldots,c_k>0$ such that
\begin{equation*}
\begin{array}{rcl}
\sum_{i=1}^kc_iu_i\otimes u_i&=&\Id_n,\\[1ex]
\sum_{i=1}^kc_iu_i&=&o.
\end{array}
\end{equation*}
We write $\mu_0$ to denote the centered discrete isotropic measure with
${\rm supp}\,\mu_0=\{u_1,\ldots,u_k\}$ and $\mu_0(\{u_i\})=c_i$ for $i=1,\ldots,k$, and define
$$
C:=Z_\infty(\mu_0)={\rm conv}\{u_1,\ldots,u_k\}.
$$
Since $\ell(C)\geq\ell(K)\geq (1-\varepsilon)\ell(\Delta_n)$ and $0<\varepsilon<n^{-60n}$,
it follows from Proposition~\ref{uiwjdist} that
we may assume that
the vertices $w_1,\ldots,w_{n+1}$ of $\Delta_n$ satisfy
\begin{equation}
\label{uiwieta}
\angle (u_i,w_i)\le\eta \mbox{ \ for $\eta=n^{15n}\varepsilon^{\frac14}\quad $ and $ \quad i=1,\ldots,n+1$}.
\end{equation}
For the simplex
$$
S_0={\rm conv}\{u_1,\ldots,u_{n+1}\}\subset K,
$$
we deduce from \eqref{uiwieta} and Lemma~\ref{closesimplex} (where we use $\eta<1/(2n)$)  that
$S_0^\circ\subset (1+2n\eta) \Delta_n^\circ$, and hence
\begin{equation}
\label{tildeDeltaS0}
\widetilde{\Delta}_n:=(1+2n\eta)^{-1} \Delta_n\subset S_0\subset K.
\end{equation}
We note that 
\begin{align}
0<\ell(\widetilde{\Delta}_n)-\ell(\Delta_n)&=\int_{\R^n}\|x\|_{\widetilde{\Delta}_n}\,d\gamma_n(x)-\ell(\Delta_n)\nonumber\\
&\le 
(1+2n\eta)\ell(\Delta_n)-\ell(\Delta_n)
=2n\eta\,\ell(\Delta_n).\label{tildeDeltaell}
\end{align}

\noindent{\bf Proof of Theorem~\ref{Lowner-stab}: }
Let $\xi>0$ be minimal such that 
$$
K\subset (1+\xi)\widetilde{\Delta}_n=(1+\xi)(1+2n\eta)^{-1}{\Delta}_n.
$$ 
Then
Lemma~\ref{voldifffromsimplex} and Lemma \ref{lemrough} (d) imply that
\begin{equation}
\label{VKDelta}
V(K\setminus \widetilde{\Delta}_n)\ge\frac{\xi}{n+1}\, V(\widetilde{\Delta}_n)=
\frac{\xi }{(n+1)(1+2n\eta)^n}\,V(\Delta_n)>\frac{\xi }{n^{2n+4}}\, \ell(\Delta_n).
\end{equation}

It follows from $K\subset B^n$, \eqref{tildeDeltaS0} and \eqref{VKDelta} that
\begin{align*}
\gamma_n(tK)&\ge\gamma_n(t\widetilde{\Delta}_n)\mbox{ \ for $t>0$}, \text{ and}\\
\gamma_n(tK)&\ge\gamma_n(t\widetilde{\Delta}_n)+\frac{e^{-\frac{1^2}2}}{(2\pi)^{\frac{n}2}}\, \frac{t^n\xi }{n^{2n+4}}\, \ell(\Delta_n)
\mbox{ \ for $t\in(0,1]$},
\end{align*}
and in turn we deduce from \eqref{ellGaussianint} that
$$
\ell(\widetilde{\Delta}_n)-\ell(K)\geq \int_0^1\gamma_n(tK)-\gamma_n(t\widetilde{\Delta}_n)\,dt\ge
\int_0^1\frac{t^n\xi }{n^{4n+4}}\,t \ell(\Delta_n)\,dt>n^{-(4n+5)} \ell(\Delta_n)\, \xi.
$$
We conclude from \eqref{tildeDeltaell} that
$$
(1-\varepsilon)\ell(\Delta_n)\leq \ell(K)\leq (1-n^{-(4n+5)}\xi+2n\eta)\ell(\Delta_n),
$$
and hence $\eta=n^{15n}\varepsilon^{\frac14}$ implies
$$
\xi\leq n^{4n+5}(2n\eta+\varepsilon)< n^{23n}\varepsilon^{\frac14}.
$$
It follows from \eqref{tildeDeltaS0} and the definition of $\xi$ that
\begin{equation}
\label{KDeltaSandwitch}
(1-2n\eta)\Delta_n\subset K\subset (1+\xi)\Delta_n.
\end{equation}
Since $\Delta_n\subset B^n$, $\eta=n^{15n}\varepsilon^{\frac14}< n^{23n}\varepsilon^{\frac14}=:\tilde\xi$ and
$\xi< \tilde\xi $, we conclude for the Hausdorff distance that
$\delta_H(K,\Delta_n)<n^{23n}\varepsilon^{\frac14}$.

To estimate the symmetric difference distance of $K$ and $\Delta_n$,
Lemma \ref{lemrough} (b), \eqref{KDeltaSandwitch} and $\xi< \tilde\xi\le n^{-2n}$ yield
\begin{align*}
\delta_{\rm vol}(K,\Delta_n)&\leq \left((1+\tilde\xi)^n-(1-\tilde\xi)^n\right)V(\Delta_n)\leq
2\tilde \xi n (1+\tilde\xi)^{n-1}V(\Delta_n)
< n^{25n}\varepsilon^{\frac14},
\end{align*}
which finishes the proof of  Theorem~\ref{Lowner-stab} if $\varepsilon<n^{-100n}$.
However, if $\varepsilon\geq n^{-100n}$, then Theorem~\ref{Lowner-stab} trivially holds
as $\delta_{\rm vol}(M,\Delta_n)<\kappa_n$ and $\delta_H(M,\Delta_n)<1$ for any convex body $M\subset B^n$ by the choice of
the constant $c=n^{26n}$.
\hfill \proofbox

\medskip 

\noindent{\bf Proof of Theorem~\ref{meanw-isotropicstab} (a), (b):} We assume that  $\ell(Z_\infty(\mu))\geq (1-\varepsilon)\ell(\Delta_n)$ is available.

Let $\alpha_0=9\cdot 2^{n+2}n^{2n+2}$ be the constant of Lemma~\ref{closefarsimplex}.
If for any $u\in {\rm supp}\,\mu$ there exists a $w_i$ such that $\angle(u,w_i)\leq \alpha_0\eta$, then
\begin{equation}
\label{est1}
\delta_H({\rm supp}\,\mu,\{w_1,\ldots,w_{n+1}\})\leq \alpha_0\eta<9\cdot 2^{n+2}n^{2n+2}
n^{15n}\varepsilon^{\frac14}<n^{22n}\,\varepsilon^{\frac14}.
\end{equation}
Therefore we indirectly assume that
$$
\zeta:=\max_{u\in {\rm supp}\,\mu}\min_{i=1,\ldots,n+1}\angle(u,w_i)> \alpha_0\eta,
$$
and hence there is some $u_0\in {\rm supp}\,\mu$ such that
 $\min_{i=1}^{n+1}\angle(u_0,w_i)=\zeta$. Let
$$
L:={\rm conv}\{u_0,u_1,\ldots,u_{n+1}\}.
$$
Lemma~\ref{closefarsimplex} and \eqref{uiwieta} imply
$$
V(L^\circ)\leq \left(1-\frac{\zeta}{2^{n+2}n^{2n}}  \right) V(\Delta_n^\circ).
$$
Since $L$ is a polytope with $n+2$ vertices, it is shown in
Meyer, Reisner \cite{MeS07} that
$$
V(L)V(L^\circ)\geq V(\Delta_n)V(\Delta_n^\circ),
$$
which proves a special case of the Mahler conjecture. Therefore we get
\begin{equation}
\label{VLDelta}
V(L)\geq \left(1+\frac{\zeta }{2^{n+2}n^{2n}}  \right) V(\Delta_n),
\end{equation}
while readily
$$
\widetilde{\Delta}_n\subset S_0\subset L
$$
holds for $\widetilde{\Delta}_n=(1+2n\eta)^{-1} \Delta_n$.
It follows from this, $L\subset B^n$ and \eqref{VLDelta} that
\begin{align*}
\gamma_n(tL)&>\gamma_n(t\widetilde{\Delta}_n)\mbox{ \ for $t>0$},\\
\gamma_n(tL)&>\gamma_n(t\widetilde{\Delta}_n)+\frac{e^{-\frac{1}{2}}}{\sqrt{2\pi}^{\frac{n}{2}}}\, \frac{t^n\zeta  }{2^{n+2}n^{2n}}\, V(\Delta_n)
\mbox{ \ for $t\in(0,1]$}.
\end{align*}
We deduce from \eqref{ellGaussianint} and Lemma \ref{lemrough} (d) that
$$
\ell(\widetilde{\Delta}_n)-\ell(L)\geq \int_0^1\gamma_n(tL)-\gamma_n(t\widetilde{\Delta}_n)\,dt>
\int_0^1\frac{t^n\zeta }{n^{5n+5}}\ell(\Delta_n)\,dt>n^{-8n} \ell(\Delta_n)\, \zeta.
$$
We conclude from \eqref{tildeDeltaell} that
$$
(1-\varepsilon)\ell(\Delta_n)\leq \ell(Z_\infty(\mu))\leq \ell(L)\leq (1-n^{-8n}\zeta+2n\eta)\ell(\Delta_n),
$$
and hence
$$
\zeta\leq n^{8n}(2n\eta+\varepsilon)< n^{22n}\varepsilon^{\frac14}.
$$
Therefore in both cases (compare \eqref{est1}), if $\ell(Z_\infty(\mu))\geq (1-\varepsilon)\ell(\Delta_n)$
for a centered isotropic measure $\mu$ on $S^{n-1}$, then
$$
\delta_H({\rm supp}\,\mu,\{w_1,\ldots,w_{n+1}\})<n^{28n}\,\varepsilon^{\frac14}.
$$
Since
$\ell(Z_\infty(\mu))\geq (1-\varepsilon)\ell(\Delta_n)$ and
$W(Z_\infty(\mu)^\circ)\geq (1-\varepsilon)W(\Delta_n^\circ)$ are equivalent
according to \eqref{mean-ell}, we have verified the case
$W(Z_\infty(\mu)^\circ)\geq (1-\varepsilon)W(\Delta_n^\circ)$ of Theorem~\ref{meanw-isotropicstab} 
as well, in the case $\varepsilon<n^{-100n}$.
However, if $\varepsilon\geq n^{-100n}$, then Theorem~\ref{meanw-isotropicstab} trivially holds
since for any $x\in S^{n-1}$ there exists a vertex $w$ of $\Delta_n$ with $\|x-w\|\le \sqrt{2}$.\hfill \proofbox

\section{An almost regular simplex for Theorem~\ref{John-stab}
and Theorem~\ref{meanw-isotropicstab} (c), (d)}\label{sec:9}

The whole section is dedicated to proving the following statement.

\begin{prop}
\label{uiwjdist0}
Let  $n+1\leq k\leq 2n^2$,  $u_1,\ldots,u_k\in S^{n-1}$ and $c_1,\ldots,c_k>0$ be such that
$$
\begin{array}{rcl}
\sum_{i=1}^kc_iu_i\otimes u_i&=&\Id_n,\\[1ex]
\sum_{i=1}^kc_iu_i&=&o,
\end{array}
$$
and $\ell(C^\circ)\leq (1+\varepsilon)\ell(\Delta_n^\circ)$ holds for $C={\rm conv}\{u_1,\ldots,u_k\}$
and $\varepsilon\in(0,n^{-60n})$.

Then for $\eta=n^{15n}\varepsilon^{\frac14}\in (0,1)$, there exists a regular simplex with vertices
$w_1,\ldots,w_{n+1}\in S^{n-1}$ and $\{i_1,\ldots,i_{n+1}\}\subset\{1,\ldots,k\}$ such that
$$
\angle(u_{i_j},w_j)\leq \eta\mbox{ \ \ for $j=1,\ldots,n+1$}.
$$
\end{prop}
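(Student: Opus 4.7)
The plan is to mirror the proof of Proposition~\ref{uiwjdist} in the polar setting: replace the use of the \emph{reverse} Brascamp--Lieb inequality by the \emph{forward} Brascamp--Lieb inequality, and observe that the support constraint in the resulting integral now produces the polar body $C^{\circ}$ rather than $C$. Assume for contradiction that the conclusion fails with $\eta=n^{15n}\varepsilon^{1/4}$. Then the hypothesis of Corollary~\ref{BLRBLflambda} holds with $\varrho=\eta$, and yields, for every $s\in[0,0.15]$,
\begin{equation*}
\int_{\R^{n+1}}\prod_{i=1}^{k}\tilde g_{s}(\langle x,\tilde u_i\rangle)^{\tilde c_i}\,dx\le(1-n^{-56n}\eta^{4})\Bigl(\int_{\R}\tilde g_{s}\Bigr)^{n+1},
\end{equation*}
while the plain Brascamp--Lieb inequality~\eqref{BLf} (applied with $\sum_i\tilde c_i=n+1$) yields the same bound without the stability factor for every $s\in\R$.

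Next I would derive the polar analogue of Lemma~\ref{exponentialintestgs}. Writing $x=y+re$ with $y\in e^{\bot}\cong\R^{n}$, the relations $\sum_i\tilde c_i\tilde u_i\otimes\tilde u_i=\Id_{n+1}$, $\sum_i\tilde c_i\tilde u_i=\sqrt{n+1}\,e$ (a consequence of $\sum_ic_iu_i=o$ and $\sum_ic_i=n$), and $\sum_i\tilde c_i=n+1$ give the identity
\begin{equation*}
\sum_{i=1}^{k}\tilde c_i(\langle x,\tilde u_i\rangle-s)^{2}=\|y\|^{2}+r^{2}-2s\sqrt{n+1}\,r+s^{2}(n+1).
\end{equation*}
Moreover, the constraint $\langle x,\tilde u_i\rangle\ge0$ for $i=1,\ldots,k$ is equivalent to $r\ge0$ together with $-y\in(r/\sqrt n)\,C^{\circ}$; no admissible $y$ exists when $r<0$, because $\sum_ic_iu_i=o$ with $c_i>0$ prevents $\langle y,u_i\rangle\ge-r/\sqrt n>0$ holding for all $i$. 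Using the symmetry of $\gamma_n$ to replace $-C^{\circ}$ by $C^{\circ}$, one obtains
\begin{equation*}
\int_{\R^{n+1}}\prod_{i=1}^{k}\tilde g_{s}(\langle x,\tilde u_i\rangle)^{\tilde c_i}\,dx=(2\pi)^{n/2}e^{-(n+1)s^{2}/2}\int_{0}^{\infty}e^{-r^{2}/2+s\sqrt{n+1}\,r}\gamma_n(r\sqrt n\,C^{\circ})\,dr,
\end{equation*}
and running the same computation with the orthonormal basis $\tilde w_i$ associated with $\Delta_n$, combined with Fubini on the product side, gives
\begin{equation*}
(2\pi)^{n/2}e^{-(n+1)s^{2}/2}\int_{0}^{\infty}e^{-r^{2}/2+s\sqrt{n+1}\,r}\gamma_n(r\sqrt n\,\Delta_n^{\circ})\,dr=\Bigl(\int_{\R}\tilde g_{s}\Bigr)^{n+1}.
\end{equation*}

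Combining these identities with the Brascamp--Lieb inequality (in its stability form for $\tau\in[0,0.15n]$), substituting $t=r\sqrt n$ and $\tau=s\sqrt{n(n+1)}$ so that $-r^{2}/2+s\sqrt{n+1}\,r=-(t-\tau)^{2}/(2n)+\tau^{2}/(2n)$, and cancelling the common factor $e^{\tau^{2}/(2n)}/\sqrt n$, one obtains
\begin{equation*}
\int_{0}^{\infty}e^{-(t-\tau)^{2}/(2n)}\gamma_n(tC^{\circ})\,dt\le\int_{0}^{\infty}e^{-(t-\tau)^{2}/(2n)}\gamma_n(t\Delta_n^{\circ})\,dt
\end{equation*}
for every $\tau\in\R$, with a negative additive correction of order $n^{-58n}\eta^{4}$ on the right when $\tau\in[0,0.15n]$ (a lower bound on $(\int_{\R}\tilde g_s)^{n+1}$ comes from \eqref{flambdaint}). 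Subtracting each side from the $C^{\circ}$-independent quantity $\int_{0}^{\infty}e^{-(t-\tau)^{2}/(2n)}\,dt$ flips the sign and yields the corresponding inequality for $1-\gamma_n$ with a positive stability gap on the $C^{\circ}$ side. Integrating over $\tau\in\R$ and using $\int_{\R}e^{-(t-\tau)^{2}/(2n)}\,d\tau=\sqrt{2\pi n}$, Fubini, and \eqref{ellGaussianint} gives
\begin{equation*}
\ell(C^{\circ})\ge\ell(\Delta_n^{\circ})+n^{-60n}\eta^{4}\,\ell(\Delta_n^{\circ}),
\end{equation*}
after using Lemma~\ref{lemrough}(a) (so $\ell(\Delta_n^{\circ})\le\sqrt n$) to absorb the remaining exponential-in-$n$ factors. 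Combined with the hypothesis $\ell(C^{\circ})\le(1+\varepsilon)\ell(\Delta_n^{\circ})$, this strict inequality forces a contradiction with $\eta=n^{15n}\varepsilon^{1/4}$, completing the proof.

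The conceptual point is that the support constraint in the Brascamp--Lieb integral naturally produces the polar body $C^{\circ}$ (not $C$), and the \emph{forward} Brascamp--Lieb inequality---which upper-bounds this integral---translates through the $\gamma_n\leftrightarrow 1-\gamma_n$ subtraction trick into a lower bound on $\ell(C^{\circ})$, exactly the direction needed to contradict $\ell(C^{\circ})\le(1+\varepsilon)\ell(\Delta_n^{\circ})$. The main technical obstacle is the bookkeeping of the dimension-dependent Gaussian prefactors $(2\pi)^{n/2}e^{-(n+1)s^{2}/2}$ and $(\int_{\R}\tilde g_{s})^{n+1}$ under the change of variables: one must verify that the stability factor $n^{-56n}\eta^{4}$, multiplied by the length $0.15n$ of the admissible $\tau$-interval and divided by $\sqrt{2\pi n}$, still strictly dominates the slack $\varepsilon\,\ell(\Delta_n^{\circ})$ allowed by the hypothesis, which is precisely what the choice $\eta=n^{15n}\varepsilon^{1/4}$ is calibrated for.
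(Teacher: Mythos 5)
Your proposal follows essentially the same route as the paper's own proof of Proposition~\ref{uiwjdist0}: assume failure, invoke Corollary~\ref{BLRBLflambda} to get the strengthened \emph{forward} Brascamp--Lieb inequality for $\tilde g_s$, identify the resulting integral as a Gaussian average over dilates of $C^\circ$ (the paper's Lemma~\ref{exponentialexpression0}), then run the $\gamma_n\leftrightarrow 1-\gamma_n$ subtraction and the $\tau$-integration to contradict $\ell(C^\circ)\le(1+\varepsilon)\ell(\Delta_n^\circ)$. The only structural difference is cosmetic: the paper lifts with $\tilde u_i=-\frac{\sqrt n}{\sqrt{n+1}}u_i+\frac{1}{\sqrt{n+1}}e$ so that the cone condition directly reads $x\in\frac{r}{\sqrt n}C^\circ$ (see \eqref{tildecalClevel}), whereas you keep the $+u_i$ lift and pass from $-C^\circ$ to $C^\circ$ by the symmetry of $\gamma_n$; both are fine, and your variant even avoids the paper's remark that Corollary~\ref{BLRBLflambda} applies to the sign-flipped vectors.

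One bookkeeping slip should be corrected: you derive the constraint correctly as $-y\in\frac{r}{\sqrt n}\,C^\circ$, but the displayed identity then shows $\gamma_n(r\sqrt n\,C^\circ)$, which is the scaling from the Proposition~\ref{uiwjdist} situation (where the cone gives $x\in r\sqrt n\,C$), not the polar one. With the correct factor $\gamma_n\bigl(\tfrac{r}{\sqrt n}C^\circ\bigr)$, the substitution must be $t=r/\sqrt n$, $\tau=s\sqrt{\tfrac{n+1}{n}}$, so the kernel is $e^{-\frac n2(t-\tau)^2}$, the admissible $\tau$-interval is $[0,0.15]$ (not $[0,0.15n]$), and the normalizer is $\sqrt{2\pi/n}$ (not $\sqrt{2\pi n}$); this is exactly the paper's computation in Section~\ref{sec:9}. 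The slip is harmless quantitatively --- the net calibration factor $0.15\sqrt{n/2\pi}$ is essentially unchanged and the final gap still dominates $n^{-60n}\eta^4\ell(\Delta_n^\circ)$ via Lemma~\ref{lemrough}(a) --- but as written the display is inconsistent with the constraint you established two lines earlier, so the chain of equalities needs this repair before the argument is complete.
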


We recall from \eqref{gtilde} that if $s\in \R$, then $\tilde g_s$ is defined by
$$
\tilde{g}_s(t)=\mathbf{1}\{{t\geq 0}\}\exp\left( -\frac{(t-s)^2}2\right),\qquad t\in\R.
$$

In this section, we slightly change the setup used in Proposition~\ref{BLRBLgsg}
and Corollary~\ref{BLRBLflambda}.
As   before Proposition~\ref{BLRBLgsg}, we fix an $e\in S^n\subset\R^{n+1}$, and
identify $e^\bot\subset \R^{n+1}$ with $\R^n$.
However now, for each $u_i\in S^{n-1}$, we consider
$$
\begin{array}{rcl}
\tilde{u}_i&=&-\frac{\sqrt{n}}{\sqrt{n+1}}\,u_i+\frac{1}{\sqrt{n+1}}\,e\in S^n,\\[1ex]
\tilde{c}_i&=&\frac{n+1}n\,c_i,
\end{array}
$$
and hence
\begin{align}
\label{sumtildeuitensor0}
\sum_{i=1}^k\tilde{c}_i\,\tilde{u}_i\otimes \tilde{u}_i&={\Id_{n+1},}\\
\label{sumtildeci0}
\sum_{i=1}^k\tilde{c}_i&={n+1,}\\
\label{sumtildeui0}
\sum_{i=1}^k\tilde{c}_i\tilde{u}_i&=\frac{\sum_{i=1}^k\tilde{c}_i}{\sqrt{n+1}}\cdot e
=\sqrt{n+1}\, e.
\end{align}

For the convex cone
$$
\widetilde{\cal C}:=\{z\in\R^{n+1}:\,\langle z,\tilde{u}_i\rangle\geq 0\mbox{ \ }i=1,\ldots,k\},
$$
the  use of $-u_i$ instead of $u_i$ in the definition of $\tilde{u}_i$ ensures that
\begin{equation}
\label{tildecalClevel}
x+r\,e\in\widetilde{\cal C}\mbox{ for  $x\in\R^n$ and $r\in \R$
 \ if and only if \ }r\geq 0\mbox{ and $x\in \frac{r}{\sqrt{n}}\,C^\circ$.}
\end{equation}
Moreover, we observe that if $C=\Delta_n$, then $k=n+1$, and $\tilde{u}_1,\ldots,\tilde{u}_{n+1}$ form an orthonormal basis of $\R^{n+1}$.

Since $-u_1,\ldots,-u_k$ satisfy the same conditions as  $u_1,\ldots,u_k$, it follows that
Corollary~\ref{BLRBLflambda} remains true for the vectors $\tilde u_1,\ldots,\tilde u_k$ as defined in this section.

\bigskip

We suppose that Proposition~\ref{uiwjdist0} does not hold, and we seek a contradiction.
From Corollary~\ref{BLRBLflambda} and \eqref{flambdaint} we deduce
 that if $s\in[0,0.15]$, then
\begin{equation}
\label{indirect-BLstab}
\int_{\R^{n+1}}\prod_{i=1}^k\tilde{g}_s(\langle z,\tilde{u}_i\rangle)^{\tilde{c}_i}\,dz\leq
(1-n^{-56n}\eta^4)\left(\int_\R \tilde{g}_s\right)^{n+1}\leq
\left(\int_\R \tilde{g}_s\right)^{n+1}-
n^{-56n}\eta^4.
\end{equation}

\bigskip

Next we state a counterpart to Lemma \ref{exponentialintestgs}, which provides general relations independent of
the indirect reasoning used to establish Proposition \ref{uiwjdist0}.

\bigskip

\begin{lemma}
\label{exponentialexpression0}
If $s\in\R$ and $C$ is defined as above, then
\begin{description}
\item{\rm (i)}
$\displaystyle (2\pi)^{\frac{n}2}e^{-\frac{(n+1)s^2}2}\int_0^\infty e^{-\frac{r^2}2+sr\sqrt{n+1}}
\gamma_n\left(\frac{r}{\sqrt{n}}\,C^\circ\right)\,dr
=\int_{\R^{n+1}}
\prod_{i=1}^k\tilde{g}_s(\langle z,\tilde{u}_i\rangle)^{\tilde{c}_i}\,dz$,
\item{\rm (ii)}
$\displaystyle
(2\pi)^{\frac{n}2}e^{-\frac{(n+1)s^2}2}\int_0^\infty e^{-\frac{r^2}2+sr\sqrt{n+1}}
\gamma_n\left(\frac{r}{\sqrt{n}}\,\Delta_n^\circ\right)\,dr
=\left(\int_\R \tilde{g}_s\right)^{n+1}$.
\end{description}
\end{lemma}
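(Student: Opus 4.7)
The strategy is to write out the integrand on the right-hand side of (i) explicitly as a truncated Gaussian on $\R^{n+1}$, and then slice the cone $\widetilde{\mathcal{C}}$ along the $e$-axis using \eqref{tildecalClevel} to recognize $\gamma_n$ on the cross-sections proportional to $C^\circ$. Part (ii) follows by specialization to $C=\Delta_n$, where the vectors $\tilde{u}_i$ form an orthonormal basis of $\R^{n+1}$ and the product simply separates by Fubini.

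For (i), I would first observe that since $\tilde{g}_s$ vanishes on $(-\infty,0)$, the product $\prod_{i=1}^k\tilde{g}_s(\langle z,\tilde{u}_i\rangle)^{\tilde{c}_i}$ is supported on $\widetilde{\mathcal{C}}$. For $z\in\widetilde{\mathcal{C}}$, the identities \eqref{sumtildeuitensor0}, \eqref{sumtildeui0}, and \eqref{sumtildeci0} yield, respectively, $\sum_{i}\tilde{c}_i\langle z,\tilde{u}_i\rangle^2=\|z\|^2$, $\sum_{i}\tilde{c}_i\langle z,\tilde{u}_i\rangle=\sqrt{n+1}\,\langle z,e\rangle$, and $\sum_{i}\tilde{c}_i=n+1$. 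Expanding the squares in the exponent then gives
$$
\sum_{i=1}^k\tilde{c}_i(\langle z,\tilde{u}_i\rangle-s)^2=\|z\|^2-2s\sqrt{n+1}\,\langle z,e\rangle+(n+1)s^2,
$$
so the integrand on the right-hand side of (i) equals $\mathbf{1}_{\widetilde{\mathcal{C}}}(z)\,e^{-(n+1)s^2/2}\,e^{-\|z\|^2/2+s\sqrt{n+1}\,\langle z,e\rangle}$.

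Next, I would parametrize $z=x+re$ with $x\in e^\bot\cong\R^n$ and $r\in\R$; by \eqref{tildecalClevel}, the membership $z\in\widetilde{\mathcal{C}}$ becomes $r\ge 0$ and $x\in(r/\sqrt{n})\,C^\circ$. Applying Fubini's theorem,
\begin{align*}
\int_{\R^{n+1}}\prod_{i=1}^k\tilde{g}_s(\langle z,\tilde{u}_i\rangle)^{\tilde{c}_i}\,dz
&=e^{-(n+1)s^2/2}\int_0^\infty e^{-r^2/2+sr\sqrt{n+1}}\int_{\frac{r}{\sqrt{n}}C^\circ}e^{-\|x\|^2/2}\,dx\,dr\\
&=(2\pi)^{n/2}\,e^{-(n+1)s^2/2}\int_0^\infty e^{-r^2/2+sr\sqrt{n+1}}\,\gamma_n\!\left(\tfrac{r}{\sqrt{n}}\,C^\circ\right)dr,
\end{align*}
which is precisely (i).

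For (ii), I would apply (i) with $C=\Delta_n$. In that case $k=n+1$, and the isotropy condition forces $c_i=n/(n+1)$, hence $\tilde{c}_i=1$ for all $i$; moreover, as the paper notes just before \eqref{tildecalClevel}, the vectors $\tilde{u}_1,\dots,\tilde{u}_{n+1}$ then form an orthonormal basis of $\R^{n+1}$. The linear map $z\mapsto(\langle z,\tilde{u}_1\rangle,\dots,\langle z,\tilde{u}_{n+1}\rangle)$ is an isometry with unit Jacobian, so Fubini yields
$$
\int_{\R^{n+1}}\prod_{i=1}^{n+1}\tilde{g}_s(\langle z,\tilde{u}_i\rangle)\,dz=\prod_{i=1}^{n+1}\int_{\R}\tilde{g}_s(\theta_i)\,d\theta_i=\left(\int_{\R}\tilde{g}_s\right)^{n+1},
$$
and combining this with (i) applied to $C=\Delta_n$ proves (ii). No serious obstacle is anticipated: the argument is an algebraic quadratic expansion followed by a slice-integration, with the essential ingredients being the defining identities for $\tilde{c}_i$ and $\tilde{u}_i$ together with the description \eqref{tildecalClevel} of the cone $\widetilde{\mathcal{C}}$.
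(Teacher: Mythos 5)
Your proposal is correct and follows essentially the same route as the paper: expand the exponent via \eqref{sumtildeuitensor0}, \eqref{sumtildeci0}, \eqref{sumtildeui0}, slice the cone $\widetilde{\mathcal{C}}$ using \eqref{tildecalClevel} and Fubini for (i), and specialize to $C=\Delta_n$ with $\tilde c_i=1$ and orthonormal $\tilde u_i$ so the integral factorizes for (ii).
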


\begin{proof}
Applying first \eqref{sumtildeuitensor0}, \eqref{sumtildeci0}, \eqref{sumtildeui0} and  then  \eqref {tildecalClevel}, we obtain
\begin{align*}
\int_{\R^{n+1}}
\prod_{i=1}^k\tilde{g}_s(\langle z,\tilde{u}_i\rangle)^{\tilde{c}_i}\,dz&=
\int_{\widetilde{\mathcal{C}}}\exp\left(-\frac12\sum_{i=1}^k\tilde{c}_i\langle z,\tilde{u}_i\rangle^2
+s\sum_{i=1}^k\tilde{c}_i\langle z,\tilde{u}_i\rangle-\frac{s^2}2\sum_{i=1}^k\tilde{c}_i\right)\,dz\\
&= e^{-\frac{(n+1)s^2}2}\int_{\widetilde{\mathcal{C}}}e^{-\frac{\|z\|^2}2+s\sqrt{n+1}\langle z,e\rangle}\,dz\\
&= e^{-\frac{(n+1)s^2}2}\int_0^\infty\int_{\frac{r}{\sqrt{n}}\,C^\circ}
e^{-\frac{\|x\|^2+r^2}2+sr\sqrt{n+1}}\,dx\,dr\\
&= \displaystyle (2\pi)^{\frac{n}2}e^{-\frac{(n+1)s^2}2}\int_0^\infty e^{-\frac{r^2}2+sr\sqrt{n+1}}
\gamma_n\left(\frac{r}{\sqrt{n}}\,C^\circ\right)\,dr.
\end{align*}

For (ii), we observe that if we replace $C$ by $\Delta_n$ in the argument above, then the analogues of $\tilde{u}_1,\ldots,\tilde{u}_{n+1}$ form an orthonormal basis of $\R^{n+1}$ and $\tilde{c}_i$ is replaced by $1$.
\hfill\proofbox
\end{proof}

\medskip 

We apply the change of parameter $\tau=s\sqrt{\frac{n+1}{n}}$
and the substitution $t=r/\sqrt{n}$ in Lemma~\ref{exponentialexpression0}, and conclude with the help
of the Brascamp-Lieb inequality \eqref{BLf} that if $\tau\in\R$, then
\begin{align*}
\nonumber
\int_0^\infty e^{-\frac{n}{2}(t-\tau)^2}\,\gamma_n(tC^\circ)\,dt&=
e^{-\frac{n\tau^2}{2}}\int_0^\infty e^{-\frac{nt^2}{2}+st\sqrt{n(n+1)}}
\, \gamma_n(tC^\circ)\,dt\\
\nonumber
&=\frac{e^{\frac{-(n+1)s^2}{2}}}{\sqrt{n}}\int_0^\infty e^{-\frac{r^2}2+sr\sqrt{n+1}}
\gamma_n\left(\frac{r}{\sqrt{n}\,}C^\circ\right)\,dr\\
\nonumber
&=\frac{1}{\sqrt{n}(2\pi)^{\frac{n}2}}
\int_{\R^{n+1}}
\prod_{i=1}^k\tilde{g}_s(\langle z,\tilde{u}_i\rangle)^{\tilde{c}_i}\,dz\\
\nonumber
&\leq\frac{1}{\sqrt{n}(2\pi)^{\frac{n}2}}
\left(\int_\R \tilde{g}_s\right)^{n+1}\\
&=\int_0^\infty e^{-\frac{n}{2}(t-\tau)^2}\,\gamma_n(t\Delta_n^\circ)\,dt,\nonumber
\end{align*}
and hence
\begin{equation}\label{intlambdatau0}
\int_0^\infty e^{-\frac{n}{2}(t-\tau)^2}\,(1-\gamma_n(t\Delta_n^\circ))\,dt\le
\int_0^\infty e^{-\frac{n}{2}(t-\tau)^2}\,(1-\gamma_n(tC^\circ))\,dt.
\end{equation}
In addition, if $\tau\in[0,0.15]\subset[0,0.15\sqrt{\frac{n+1}{n}})$, which implies that $s\in[0,0.15)$,
 then
using \eqref{indirect-BLstab} instead of the Brascamp-Lieb inequality \eqref{BLf}, we obtain
\begin{align*}
\nonumber
\int_0^\infty e^{-\frac{n}{2}(t-\tau)^2}\,\gamma_n(tC^\circ)\,dt&=
\frac{1}{\sqrt{n}(2\pi)^{\frac{n}2}}
\int_{\R^{n+1}}
\prod_{i=1}^k\tilde{g}_s(\langle z,\tilde{u}_i\rangle)^{\tilde{c}_i}\,dz \\
\nonumber
&\leq
\frac{1}{\sqrt{n}(2\pi)^{\frac{n}2}}
\left( \left(\int_\R \tilde{g}_s\right)^{n+1}
- n^{-56n}\,\eta^4\right)\\
&=
\int_0^\infty e^{-\frac{n}{2}(t-\tau)^2}\,\gamma_n(t\Delta_n^\circ)\,dt
- \frac{n^{-56n}}{\sqrt{n}(2\pi)^{\frac{n}2}}\,  \eta^4.
\end{align*}
Hence, we get
\begin{equation}\label{intlambdataustab0}
\int_0^\infty e^{-\frac{n}{2}(t-\tau)^2}\,(1-\gamma_n(t\Delta_n^\circ))\,dt\le
\int_0^\infty e^{-\frac{n}{2}(t-\tau)^2}\,(1-\gamma_n(tC^\circ))\,dt
-\frac{0.15 \,n^{-56n}}{\sqrt{n}(2\pi)^{\frac{n}2}} \, \eta^4.
\end{equation}
Integrating \eqref{intlambdatau0} for $\tau\in\R\setminus [0,0.15]$
and \eqref{intlambdataustab0} for $\tau\in[0,0.15]$, we deduce that
\begin{align}
&\int_{-\infty}^\infty\int_0^\infty e^{-\frac{n}{2}(t-\tau)^2}\,(1-\gamma_n(t\Delta_n^\circ))\,dt\,d\tau
\nonumber\\
&\qquad\qquad \leq \int_{-\infty}^\infty\int_0^\infty e^{-\frac{n}{2}(t-\tau)^2}\,(1-\gamma_n(t C^\circ)\,dtd\tau
-\frac{0.15 \,n^{-56n}}{\sqrt{n}(2\pi)^{\frac{n}2}} \, \eta^4.\label{inttaustab0}
\end{align}
Since for any $t\in\R$, we have
$$
\int_{-\infty}^\infty e^{-\frac{n}{2}(t-\tau)^2}\,d\tau={\sqrt{\frac{2\pi} n},}
$$
we deduce from \eqref{ellGaussianint} and \eqref{inttaustab0} that
\begin{align*}
\nonumber
\ell(C^\circ)&=\int_0^\infty(1-\gamma_n(tC^\circ)\,dt\\
\nonumber
&=\sqrt{\frac{n}{2\pi}}\int_{-\infty}^\infty\int_0^\infty e^{-\frac{n}{2}(t-\tau)^2}\,(1-\gamma_n(tC^\circ))\,dt\,d\tau\\
\nonumber
&\geq
\sqrt{\frac{n}{2\pi}}\int_{-\infty}^\infty\int_0^\infty e^{-\frac{n}{2}(t-\tau)^2}\,(1-\gamma_n(t\Delta_n^\circ))\,dt\,d\tau
+\frac{0.15 \,n^{-56n}}{\sqrt{n}(2\pi)^{\frac{n}2}} \, \eta^4\\
&= \int_0^\infty(1-\gamma_n(t\Delta_n^\circ)\,dt
+\frac{0.15 \,n^{-56n}}{ (2\pi)^{\frac{n+1}2}} \, \eta^4
\nonumber
\\
&\ge \ell(\Delta_n^\circ)+ n^{-60n}\eta^4 \ell(\Delta_n^\circ),
\end{align*}
where  Lemma \ref{lemrough} (a) was used in the last step. This shows that 
$$
(1+\varepsilon)\ell(\Delta_n^\circ)\geq \ell(C^\circ)> (1+n^{-60n}\eta^4)\ell(\Delta_n^\circ),
$$
which contradicts $\eta=n^{15n}\varepsilon^{\frac14}$, and in turn implies
Proposition~\ref{uiwjdist0}.

\section{Proof of Theorem~\ref{John-stab} and of Theorem~\ref{meanw-isotropicstab} (c), (d)}
\label{sec:a10}

For Theorem~\ref{meanw-isotropicstab}, let $\mu$ be a  centered isotropic measure  on $S^{n-1}$,
and let $K=Z_\infty(\mu)^\circ$, and hence ${\rm supp}\,\mu=\partial K\cap S^{n-1}$.
In particular, under the assumptions of Theorem~\ref{John-stab} and of Theorem~\ref{meanw-isotropicstab} (d),
we have $\ell(K)\leq (1+\varepsilon)\ell(\Delta_n^\circ)$. First, we assume that
$$
\varepsilon<n^{-100n}.
$$

It follows from Lemma~\ref{discrete-iso} and John's theorem that there exist
 $k\geq n+1$ with $k\leq 2n^2$,  $u_1,\ldots,u_k\in \partial K\cap S^{n-1}$ and $c_1,\ldots,c_k>0$ such that
$$
\begin{array}{rcl}
\sum_{i=1}^kc_iu_i\otimes u_i&=&\Id_n,\\[1ex]
\sum_{i=1}^kc_iu_i&=&o.
\end{array}
$$
We write $\mu_0$ to denote the centered discrete isotropic measure with
${\rm supp}\,\mu_0=\{u_1,\ldots,u_k\}$ and $\mu_0(\{u_i\})=c_i$ for $i=1,\ldots,k$, and define (again)
$$
C:=Z_\infty(\mu_0)={\rm conv}\{u_1,\ldots,u_k\}\subset K^\circ.
$$
Since $\ell(C^\circ)\leq\ell(K)\leq (1+\varepsilon)\ell(\Delta_n^\circ)$,
it follows from Proposition~\ref{uiwjdist0} that
we may assume that
the vertices $w_1,\ldots,w_{n+1}$ of $\Delta_n$ satisfy
\begin{equation}
\label{uiwieta0}
\angle (u_i,w_i)\le\eta \mbox{ \ for $\eta=n^{15n}\varepsilon^{\frac14}\quad$ and $\quad i=1,\ldots,n+1$}.
\end{equation}

We observe that $K\subset S_1:=S_0^\circ$, where $S_1$ is the polar of $S_0$ and the facets of 
$$
S_1=\bigcap_{i=1}^{n+1}\{x\in\R^n:\,\langle x,u_i\rangle\leq 1\}
$$
touch $B^n$ at $u_1,\ldots,u_{n+1}$.
We deduce from \eqref{uiwieta0} and Lemma~\ref{closesimplex} that
\begin{equation}
\label{DeltanS0sandwich}
(1-n\eta) \Delta_n^\circ\subset S_1\subset (1+2n\eta) \Delta_n^\circ\subset 2\Delta^\circ_n.
\end{equation}

We claim that
\begin{equation}
\label{VKS0diff0}
\delta_{\rm vol}(K, S_1)=V(S_1\setminus K)\leq n^{23n}\varepsilon^{\frac14}.
\end{equation}
Using 
$\frac1{2n}\,S_1\subset \frac1{n}\Delta_n^\circ\subset B^n$, \eqref{ellGaussianint} and Lemma \ref{lemrough} (a), we get
\begin{align}
\nonumber
\ell(K)-\ell(S_1)&=\int_0^\infty(\gamma_n(tS_1)-\gamma_n(tK))\,dt\geq
\int_0^{\frac1{2n}} \frac{e^{-\frac{1}2}}{(2\pi)^{n/2}}\, t^nV(S_1\backslash K)\,dt\\
\label{ellKS0diff0}
&\ge
\frac{e^{-\frac{1}2}}{(n+1)(2\pi)^{n/2}(2n)^{n+1}}\, V(S_1\setminus K) \,\frac{\ell(\Delta^\circ_n)}{\sqrt{n}}>
\frac{V(S_1\setminus K)}{n^{6n}}\,\ell(\Delta^\circ_n).
\end{align}
In addition, \eqref{DeltanS0sandwich} yields
\begin{equation}
\label{ellDeltaS0diff0}
\ell(S_1)-\ell(\Delta^\circ_n)\geq\ell((1+2n\eta)\Delta^\circ_n)-\ell(\Delta^\circ_n)=
\left((1+2n\eta\right)^{-1}-1)\ell(\Delta^\circ_n)
\geq  -2n\eta \,\ell(\Delta^\circ_n).
\end{equation}
We deduce from $\ell(K)\leq (1+\varepsilon)\ell(\Delta^\circ_n)$, \eqref{ellKS0diff0} and \eqref{ellDeltaS0diff0}
that
$$
\varepsilon\,\ell(\Delta^\circ_n)\geq \ell(K)-\ell(\Delta^\circ_n)>
\left(\frac{V(S_1\setminus K)}{n^{6n}}-2n\eta\right)\,\ell(\Delta^\circ_n).
$$
Then $\eta=n^{15n}\varepsilon^{\frac14}$ implies that
$$
V(S_1\setminus K)<
n^{6n}(\varepsilon+2n\eta)<
4n\cdot n^{6n}\eta<n^{23n}\varepsilon^{\frac14},
$$
which proves \eqref{VKS0diff0}.

\bigskip

\noindent{\bf Proof of Theorem~\ref{John-stab}:} We start to deal with the symmetric volume distance of $K$ and $\Delta^\circ$.

Using \eqref{DeltanS0sandwich}, $(1+2n\eta)^n\le 4/3$ and Lemma \ref{lemrough} (b), we get
\begin{equation}
\label{voldiffS0Delta0}
\delta_{\rm vol}(S_0,\Delta^\circ_n)\leq \left((1+2n\eta)^n-(1-n\eta)^n\right)V(\Delta^\circ_n)
<n\,4n\eta\, V(\Delta^\circ_n) <n^{19n}\varepsilon^{\frac14}.
\end{equation}
Combining \eqref{VKS0diff0} and \eqref{voldiffS0Delta0}, we get
  $\delta_{\rm vol}(K,\Delta^\circ)\leq n^{24n}\varepsilon^{\frac14}$,
which proves Theorem~\ref{John-stab} (i) under the assumption $\varepsilon<n^{-100n}$.

In order to derive an upper bound for the Hausdorff distance of $K$ and $\Delta^\circ$,
we first show that the centroid $\sigma_0$ of $S_0$ satisfies
\begin{equation}
\label{S0centroid}
\sigma_0\in 4n\eta \Delta_n^\circ.
\end{equation}
To prove \eqref{S0centroid}, we observe that
$$
\Delta_n^\circ=\bigcap_{i=1}^{n+1}\{x\in\R^n:\,\langle x,w_i\rangle\leq 1\}={\rm conv}\{-nw_1,\ldots,-nw_{n+1}\}.
$$
For each $j\in\{1,\ldots,n+1\}$, \eqref{DeltanS0sandwich} yields that $S_1$ has a
vertex $v_j$ with
$$
\langle -w_j,v_j\rangle\geq h_{(1-n\eta)\Delta_n^\circ}(-w_j)=(1-n\eta)n.
$$
Since $S_1\subset (1+2n\eta) \Delta_n^\circ$ and $\Delta_n^\circ+nw_j$ is homothetic
to $\Delta_n^\circ$ with $o$ as the vertex with exterior normal $-w_j$, we have
\begin{align}
\nonumber
v_j&\in\{x\in (1+2n\eta) \Delta_n^\circ:\langle -w_j,x\rangle\geq (1-n\eta)n\}\\
\label{vjinsmallsimplex}
&\qquad\qquad =
 -(1+2n\eta)nw_j+3n\eta\left((1+2n\eta) \Delta_n^\circ+(1+2n\eta)nw_j\right)
\end{align}
for $j=1,\ldots,n+1$. Hence, the vertices $v_1,\ldots,v_{n+1}$ are contained in mutually disjoint
neighbourhoods of $-nw_1,\ldots,-nw_{n+1}$ and {thus} $S_1=\text{conv}\{v_1,\ldots,v_{n+1}\}$.

If $i=1,\ldots,n+1$, then
$\langle w_i,w_j\rangle=\frac{-1}{n}$ for $j\neq i$ implies
$\langle w_i,x\rangle\leq n+1$ for $x\in\Delta_n^\circ+nw_i$ and $\langle w_i,y\rangle\leq 0$
for $y\in \Delta_n^\circ+nw_j$ and $j\neq i$. Therefore
\eqref{vjinsmallsimplex}  yields
\begin{align*}
(n+1)\langle w_i,\sigma_0\rangle&=\langle w_i,v_i\rangle+\sum_{j\neq i}\langle w_i,v_j\rangle
\leq(1+2n\eta)[-n+3n\eta(n+1)]+n(1+2n\eta)\\
&\leq 3n(n+1)(1+2n\eta)\eta\le 4n(n+1)\eta,
\end{align*}
for $i=1,\ldots,n+1$, which proves the claim.

Note that $\sigma_0\in 4n\eta\Delta^\circ_n\subset 4n^2\eta B^n\subset \text{int}(B^n)\subset K\subset S_1$, in particular
we have $o\in\text{int}(K-\sigma_0)$ and  $K-\sigma_0\subset S_1-\sigma_0$.
Let $\xi\in[0,1)$ be minimal such that
$$
\sigma_0+(1-\xi)(S_1-\sigma_0)\subset K.
$$
From \eqref{DeltanS0sandwich}, $\eta<1/(4n^2)$ and Lemma \ref{lemrough} (c), we deduce that 
$$
V(S_1)\ge (1-n\eta)^nn^nV(\Delta_n) 
\ge \left[\left(1-\frac{1}{4n}\right)^2\left(1+\frac{1}{n}\right)\right]^{\frac{n}{2}}>1.
$$
Then it follows from Lemma~\ref{voldifffromsimplex} (i) and \eqref{VKS0diff0} that
$$
\frac{\xi^n}{e}\, V(S_1)\leq V(S_1\setminus K)<
n^{23n}\varepsilon^{\frac14}<n^{23n}\varepsilon^{\frac14}\, V(S_1),
$$
and hence $\xi<n^{24}\varepsilon^{\frac1{4n}}$. 
We deduce from \eqref{S0centroid} that $-\sigma_0\in 4n^2\eta\Delta^\circ_n$, and therefore
$$
S_1-\sigma_0\subset (1+2n\eta)\Delta^\circ_n+4n^2\eta\Delta^\circ_n\subset 2\Delta^\circ_n\subset 2nB^n.
$$
Since
$K\subset S_1\subset K+\xi(S_1-\sigma_0)$ by the definition of $\xi$, it follows that
$$
\delta_H(S_1,K)\leq 2n\xi<n^{26}\varepsilon^{\frac1{4n}}.
$$
On the other hand,  $\eta=n^{15n}\varepsilon^{\frac14}$, \eqref{DeltanS0sandwich} and
$\Delta^\circ_n\subset nB^n$ imply that
$$
\delta_H(S_1,\Delta^\circ_n)<2n^2\eta<n^{17n}\varepsilon^{\frac14}.
$$
Since $n^{17n}\varepsilon^{\frac14}<n^{26}\varepsilon^{\frac1{4n}}$
if $\varepsilon<n^{-100n}$, we have $\delta_H(K,\Delta^\circ_n)<n^{27}\varepsilon^{\frac1{4n}}$,
which completes the proof of  Theorem~\ref{John-stab} if $\varepsilon<n^{-100n}$.

However, if $\varepsilon\geq n^{-100n}$, then Theorem~\ref{John-stab} trivially holds
as $\delta_{\rm vol}(M,\Delta_n)\le n^n\kappa_n$ and $\delta_H(M,\Delta_n)\le n$ for any convex body
$M\subset nB^n$. Note that if $B^n$ is the John ellipsoid of $K$, then $K\subset nB^n$, and $\kappa_n\le 6$ for all $n\in\N$. \hfill \proofbox

\medskip 

\noindent{\bf Proof of Theorem~\ref{meanw-isotropicstab} (c), (d):} Suppose that $\ell(Z_\infty(\mu)^\circ)\leq (1+\varepsilon)\ell(\Delta_n^\circ)$.

Let $\alpha_0=9\cdot 2^{n+2}n^{2n+2}$ be the constant of Lemma~\ref{closefarsimplex}.
If for any $u\in {\rm supp}\,\mu$ there exists a $w_i$ such that $\angle(u,w_i)\leq \alpha_0\eta$, then
\begin{equation*}
\delta_H({\rm supp}\,\mu,\{w_1,\ldots,w_{n+1}\})\leq \alpha_0\eta<9\cdot 2^{n+2}n^{2n+2}
n^{15n}\varepsilon^{\frac14}<n^{22n}\,\varepsilon^{\frac14}.
\end{equation*}

Therefore we assume that
$$
\zeta:=\max_{u\in {\rm supp}\,\mu}\min_{i=1,\ldots,n+1}\angle(u,w_i)> \alpha_0\eta,
$$
and let $u_0\in {\rm supp}\,\mu$ be such that
 $\min\{\angle(u_0,w_i):i=1,\ldots,n+1\}=\zeta$. Let
$$
L={\rm conv}\{u_0,u_1,\ldots,u_{n+1}\},
$$
and hence $Z_\infty(\mu)^\circ=K\subset L^\circ\subset S_1$.
Lemma~\ref{closefarsimplex} and \eqref{uiwieta0} imply
$$
V(L^\circ)\leq \left(1-\frac{\zeta }{2^{n+2}n^{2n}}  \right)\, V(\Delta_n^\circ),
$$
thus
$$
\delta_{\rm vol}(L^\circ,\Delta_n^\circ)\geq \frac{\zeta }{2^{n+2}n^{2n}}\, V(\Delta_n^\circ).
$$
On the other hand, \eqref{DeltanS0sandwich} and $(1+2n\eta)^n<4/3$ yield
$$
\delta_{\rm vol}(S_1,\Delta_n^\circ)\leq
\left((1+2n\eta)^n-(1-n\eta)^n\right) V(\Delta_n^\circ)<4n^2\eta\, V(\Delta_n^\circ).
$$
Therefore the triangle inequality implies that
$$
V(S_1\setminus L^\circ)=\delta_{\rm vol}(S_1,L^\circ)\geq
\left(\frac{\zeta }{2^{n+2}n^{2n}}-4n^2\eta\right)V(\Delta_n^\circ).
$$
Since $V(\Delta_n^\circ)>1$ by Lemma \ref{lemrough} (c), we deduce from
\eqref{VKS0diff0}  that
$$
n^{23n}\varepsilon^{\frac14}\, V(\Delta^\circ_n)\geq
V(S_1\setminus Z_\infty(\mu)^\circ)\geq V(S_1\setminus L^\circ)>
\left(\frac{\zeta }{2^{n+2}n^{2n}}-4n^2\eta\right) V(\Delta^\circ_n).
$$
It follows from $\eta=n^{15n}\varepsilon^{\frac14}$ that
$$
\zeta<2^{n+2}n^{2n}(n^{23n}\varepsilon^{\frac14}+4n^2\eta)<n^{28n}\varepsilon^{\frac14},
$$
which proves Theorem~\ref{meanw-isotropicstab}
in the case where $\ell(Z_\infty(\mu)^\circ)\leq (1+\varepsilon)\ell(\Delta_n^\circ)$ and $\varepsilon<n^{-100n}$.

Since
$\ell(Z_\infty(\mu)^\circ)\leq (1+\varepsilon)\ell(\Delta_n^\circ)$ and
$W(Z_\infty(\mu))\leq (1+\varepsilon)W(\Delta_n)$ are equivalent
according to \eqref{mean-ell}, we have completed the proof of Theorem~\ref{meanw-isotropicstab}
if $\varepsilon<n^{-100n}$.

However, if $\varepsilon\geq n^{-100n}$, then Theorem~\ref{meanw-isotropicstab} trivially holds
as for any $x\in S^{n-1}$ there exists a vertex $w$ of $\Delta_n$ with $\|x-w\|\le\sqrt{2}$. \hfill \proofbox

\section{Proof of Corollary~\ref{mean-width-stab}}\label{asec:11}

For the proof of Corollary~\ref{mean-width-stab}, we need the following observation.

\begin{lemma}
\label{polar-Hausdorff}
If $\frac1n\,B^n\subset K,C\subset nB^n$ for  convex bodies $K$ and $C$ in $\R^n$, then
$$
\mbox{$\frac1{n^2}$}\,\delta_H(K,C)\leq \delta_H(K^\circ,C^\circ)\leq n^2\delta_H(K,C).
$$
\end{lemma}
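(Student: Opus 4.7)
The plan is to prove the upper bound directly via a rescaling argument, and then to derive the lower bound by applying the upper bound to the polars (which satisfy the same sandwich condition).

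First I would observe that the hypothesis is self-dual: taking polars of $\frac{1}{n}B^n\subset K\subset nB^n$ gives $\frac{1}{n}B^n\subset K^\circ\subset nB^n$, and likewise for $C^\circ$. So $\|y\|\le n$ for every $y\in K^\circ\cup C^\circ$.

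For the upper bound, set $d=\delta_H(K,C)$, so $K\subset C+dB^n$ and $C\subset K+dB^n$. Take any $y\in K^\circ$; I want to exhibit a nearby point of $C^\circ$. For an arbitrary $x\in C$ write $x=x'+w$ with $x'\in K$ and $\|w\|\le d$; then
$$
\langle y,x\rangle=\langle y,x'\rangle+\langle y,w\rangle\le 1+\|y\|\,d\le 1+nd,
$$
so $\frac{y}{1+nd}\in C^\circ$. Since
$$
\Bigl\|y-\frac{y}{1+nd}\Bigr\|=\|y\|\cdot\frac{nd}{1+nd}\le n\cdot nd=n^2 d,
$$
this shows $K^\circ\subset C^\circ+n^2d\,B^n$. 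Swapping the roles of $K$ and $C$ yields the reverse inclusion, hence $\delta_H(K^\circ,C^\circ)\le n^2\,\delta_H(K,C)$.

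For the lower bound, apply the upper bound to the pair $K^\circ, C^\circ$ in place of $K, C$ — this is legitimate by the opening observation — and then use $K^{\circ\circ}=K$ and $C^{\circ\circ}=C$ to conclude
$$
\delta_H(K,C)=\delta_H(K^{\circ\circ},C^{\circ\circ})\le n^2\,\delta_H(K^\circ,C^\circ),
$$
which rearranges to the stated lower bound. The only mildly delicate step is the rescaling estimate in the middle paragraph; everything else is bookkeeping with the self-duality of the sandwich hypothesis.
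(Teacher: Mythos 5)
Your proposal is correct and follows essentially the same route as the paper: both prove the inclusion $K^\circ\subset(1+n\,\delta_H(K,C))C^\circ\subset C^\circ+n^2\delta_H(K,C)B^n$ (you via a pointwise support-function estimate, the paper via the set inclusions $K\subset C+\delta_H(K,C)B^n\subset(1+n\,\delta_H(K,C))C$ and polarity), and then obtain the lower bound by applying the upper bound to the polars together with the bipolar theorem. The only difference is cosmetic, namely whether the hypothesis $\frac1n B^n\subset K,C\subset nB^n$ is invoked on the primal or the dual side.
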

\begin{proof} We also have $\frac1n\,B^n\subset K^\circ,C^\circ\subset nB^n$.
First, we show
\begin{equation}
\label{polar-Hausdorff0}
\delta_H(K^\circ,C^\circ)\leq n^2\delta_H(K,C).
\end{equation}
Since $K\subset C+\delta_H(K,C)B^n\subset C+n\delta_H(K,C)C=(1+n\delta_H(K,C))C$, we have
$$
C^\circ\subset (1+n\delta_H(K,C))K^\circ\subset K^\circ+n^2\delta_H(K,C)\,B^n.
$$
By symmetry, we also have $K^\circ\subset C^\circ+n^2\delta_H(K,C)\,B^n$, and thus we have
verified \eqref{polar-Hausdorff0}.

Changing the {roles} of $K,C$ and their polars $K^\circ,C^\circ$ in \eqref{polar-Hausdorff0} (and using the bipolar theorem),
we also deduce the   inequality $\delta_H(K,C)\leq n^2\delta_H(K^\circ,C^\circ)$.
\hfill \proofbox
\end{proof}

\medskip 

Since $W(K)=\mbox{$\frac2{\ell(B^n)}$}\, \ell(K^\circ) $ according to \eqref{mean-ell},
we conclude Corollary~\ref{mean-width-stab} by combining
Theorem~\ref{Lowner-stab} (ii),  Theorem~\ref{John-stab} (ii) and Lemma~\ref{polar-Hausdorff}.
\proofbox

\medskip 

\noindent {\bf Remark } The factor $n^2$ in Lemma~\ref{polar-Hausdorff} is optimal.

\bigskip

\noindent
Authors' addresses:

\medskip

\noindent
K\'aroly J. B\"or\"oczky, MTA Alfr\'ed R\'enyi Institute of Mathematics, Hungarian Academy of Sciences, Re\'altanoda u. 13-15, 1053 Budapest, Hungary. E-mail: carlos@renyi.hu

\medskip
\noindent
Ferenc Fodor, Department of Geometry, Bolyai Institute, University of Szeged, Aradi v\'ertan\'uk tere 1, 6720 Szeged, Hungary. E-mail: fodorf@math.u-szeged.hu

\medskip
\noindent
Daniel Hug, Karlsruhe Institute of Technology (KIT), D-76128 Karlsruhe, Germany. E-mail: daniel.hug@kit.edu

\end{document}